\newtheorem{thrm}{Theorem}[section]
\newtheorem{lemma}[thrm]{Lemma}
\newtheorem{prop}[thrm]{Proposition}
\newtheorem{cor}[thrm]{Corollary}
\newtheorem{dfn}[thrm]{Definition}
\newtheorem{rmrk}[thrm]{Remark}
\newtheorem{conv}[thrm]{Convention}
\newcommand{\newsection}{    
\setcounter{equation}{0}\section}
\def\appendix#1{\addtocounter{section}{1}\setcounter{equation}{0}
\renewcommand{\thesection}{\Alph{section}}
\section*{Appendix \thesection\protect\indent \parbox[t]{11.15cm}{#1}}
\addcontentsline{toc}{section}{Appendix \thesection\ \ \ #1}}
\newcommand{\be}{\begin{eqnarray}}
\newcommand{\ee}{\end{eqnarray}}
\newcommand{\bea}{\begin{eqnarray}}
\newcommand{\eea}{\end{eqnarray}}
\newcommand{\ba}{\begin{array}}
\newcommand{\ea}{\end{array}}
\def\d{\delta}
\def\sb {{\nabla}}
\def\LC{{\nabla^g}}
\def\ps{{\Psi^+}}
\def\sp{{\Psi^-}}
\begin{document}
\begin{center}
\vspace*{-1.0cm}


{\Large \bf Riemannian curvature identities  on  almost  Calabi-Yau  with torsion   6-manifold  and generalized Ricci solitons
} 

\vspace{0.5  cm}
 {\large S.~ Ivanov${}^1$ and  N. Stanchev$^2$}

\vspace{0.5cm}

${}^1$ University of Sofia, Faculty of Mathematics and
Informatics,\\ blvd. James Bourchier 5, 1164, Sofia, Bulgaria
\\and  Institute of Mathematics and Informatics,
Bulgarian Academy of Sciences\\
email: ivanovsp@fmi.uni-sofia.bg

\vspace{0.5cm}
${}^2$ University of Sofia, Faculty of Mathematics and
Informatics,\\ blvd. James Bourchier 5, 1164, Sofia, Bulgaria\\


\end{center}

\vskip 0.5 cm
\begin{abstract}
It is observed that on a compact almost complex Calabi-Yau with torsion 6-manifold  the Nijenhuis tensor is parallel with respect to the torsion connection. If the torsion is closed then the space is a compact generalized  gradient Ricci soliton. In this case, the torsion connection is Ricci-flat if and only if either the norm of the torsion  or the Riemannian scalar curvature is constant. On a compact almost complex Calabi-Yau with torsion 6-manifold  it is shown that the curvature of the torsion connection  is symmetric  
 on exchange of the   first and the second pairs and has vanishing Ricci tensor if and only if it satisfies the Riemannian first Bianchi identity.
\medskip

Keywords: torsion connection, $SU(3)$ holonomy, almost Calabi-Yau with torsion, Riemannian Bianchi identities, generalized gradient Ricci solitons.

\medskip

AMS MSC2020: 53C55, 53C21, 53C29, 53Z05
\end{abstract}

\vskip 0.5cm
\noindent{\bf Acknowledgements:} \vskip 0.1cm
We would like to thank Jeffrey Streets and Ilka Agricola for the extremely useful remarks,  comments and suggestions.

The research of S.I.  is partially supported by Contract KP-06-H72-1/05.12.2023 with the National Science Fund of Bulgaria,  by Contract 80-10-192 / 17.5.2023   with the Sofia University "St.Kl.Ohridski" and the National Science Fund of Bulgaria, National Scientific Program ``VIHREN", Project KP-06-DV-7. The research of N.S.  is partially  financed by the European Union-Next Generation EU, through the National Recovery and Resilience Plan of the Republic of Bulgaria, project 
SUMMIT BG-RRP-2.004-0008-C01.

\vskip 0.5cm

Statements and Declarations: not applicable


\tableofcontents

\setcounter{section}{0}
\setcounter{subsection}{0}



\newsection{Introduction}
Riemannian manifolds with metric connections having totally skew-symmetric torsion and special holonomy received a lot of interest in mathematics and theoretical physics mainly from supersymmetric string theories and supergravity.  The main reason comes from the Hull-Strominger system which describes the supersymmetric background in heterotic string theories \cite{Str,Hull}. The number of preserved supersymmetries depends on the number of parallel spinors with respect to a metric connection $\sb$ with totally skew-symmetric torsion $T$. 
The torsion 3-form $T$ is identified with the 3-form field strength in these theories. 
The presence of a $\nabla-$parallel spinor leads to restriction of the
holonomy group $Hol(\nabla)$ of the torsion connection $\nabla$.
Namely, $Hol(\nabla)$ has to be contained in $SU(n), dim=2n$
\cite{Str,GMW,IP1,IP2,Car,BB,BBE,GIP}, the exceptional group $G_2,
dim=7$ \cite{FI,GKMW,FI1}, the Lie group $Spin(7), dim=8$
\cite{GKMW,I1}. A detailed analysis of the possible geometries is
carried out in \cite{GMW}.

Complex non-K\"ahler geometries appear
in string compactifications  and are studied intensively for a long time
\cite{Str,GP,GMW,GKMW,GMPW,GP,GPap,BB,BBE}. 
Hermitian manifolds have widespread applications in both physics and differential geometry in connection with solutions to the Hull-Strominger system see \cite{LY,yau,yau1,FIUV,XS,OLS,PPZ,PPZ1,PPZ2,PPZ3,PPZ4,FHP,FHP1,CPYau,CPY1,GRST,Ph} and references therein.
On a Hermitian manifold,  there exists   a  unique
connection which preserves  the Hermitian structure and has
totally skew-symmetric torsion tensor. Its existence and explicit expression first appeared in Strominger's seminal paper \cite{Str} in 1986 in connection with the heterotic supersymmetric string background, where he called it the H-connection. Three years later, Bismut formally discussed
and used this connection in his local index theorem paper \cite{bismut}, which leads to the name Bismut
connection in literature.
We call this connection the Strominger-Bismut connection. Note that the connection also appeared implicitly earlier (\cite{Yano}) and in some literature it was also called the KT connection (K\"ahler with torsion) or characteristic connection.  When the holonomy of the Strominger-Bismut connection is contained in $SU(n)$ one has the notion of Calabi-Yau manifold with torsion (CYT) spaces and these manifolds are of great interest in string theories and in mathematics since CYT appear in the Hull-Strominger system. 
Vanishing theorems for
the Dolbeault cohomology on compact Hermitian non-K\"ahler manifold were found in terms
of the Strominger-Bismut connection in  \cite{AI,IP2,IP1,YZZ,Ye}.

If the torsion 3-form $T$ of $\nabla$ is closed, $dT=0$ which, in Hermitian manifold,  is equivalent to the condition $\partial\bar\partial F=0$, where $F$ is the fundamental 2-form,  the Hermitian metric g is called SKT (strong K\"ahler with torsion) \cite{hkt} or pluriclosed.
The SKT (pluriclosed) metrics have found many applications in
both physics, see eg \cite{hull, howe, Str, hethor,
hethor1,sethi} and geometry, see eg \cite{yaujost,IP1, IP2, salamon,  yau,
yau1, FIUV,FT1, streets,Str1} and references therein. For example in type II string theories, the three form
$T$ is identified with the 3-form field strength and it is required
by construction to satisfy $dT=0$ (see e.g. \cite{GKMW,GMW}). In a series of papers Streets and Tian
\cite{streets,ST,ST1} introduced a Hermitian Ricci flow under which the
pluriclosed or equivalently strong KT structure is preserved, show the relationship of pluriclosed flow to type II string backgrounds and find that pluriclosed flow preserves N=(2,2) supersymmetry (generalized K\"ahler geometry). More generally, the geometry of a torsion connection with closed torsion form appears in the framework of the generalized Ricci flow and the generalized (gradient) Ricci solitons developed by Garcia-Fernandez and Streets \cite{GFS} (see the references therein). Compact pluriclosed (strong) CYT spaces are connected with the pluriclosed flow and the equations of motion in compactifications of type II supergravity due to the recent work of Garcia-Fernandez, Jordan and Streets \cite{GFJS}.  It is shown in \cite[Proposition~8.14]{GFS}, (see also \cite[Proposition~2.6]{GFJS} that any compact pluriclosed CYT space is automatically a steady generalized gradient Ricci soliton  \cite[Proposition~8.14]{GFS}, (see also \cite[Proposition~2.6]{GFJS}). 
Moreover, global existence and convergence of pluriclosed flow to a Stromonger-Bismut-flat metric on complex manifolds admitting Strominger-Bismut-flat metric is also shown in \cite{GFJS}.
Generalizations of the pluriclosed condition $\partial\bar\partial F=0$ on 2n dimensional Hermitian manifolds in the form $
F^\ell\wedge
\partial\bar\partial F^k=0~, ~~~1\leq k+\ell\leq n-1~$  has been investigated in \cite{FU,Pop,FWW,IP3} etc.

Hermitian metrics with the Strominger-Bismut connection being K\"ahler-like,
namely, its curvature satisfies  the Riemannian first Bianchi identity (\eqref{RB} below) have been studied in \cite{AOUV},
investigating this property on 6-dimensional solvmanifolds with holomorphically
trivial canonical bundle and it is conjectured by Angela-Otal-Ugarte-Villakampa in \cite{AOUV} that such metrics should be SKT (pluriclosed).
Support to that can be derived from  [Theorem~5]\cite{WYZ} where the simply connected Hermitian manifolds with flat  Strominger-Bismut connection are classified and it easily follows that it is SKT (pluriclosed) with parallel 3-form torsion \cite{AOUV}. The latter conclusion also follows from the Cartan-Schouten theorem \cite{CS}, (see also [Theorem~2.2]\cite{AF}).
The conjecture by  Angella-Otal-Ugarte-Villakampa was proved in \cite{ZZ}. It is also shown in \cite{ZZ} that in this case the torsion is closed and $\sb-$parallel, $dT=\nabla T=0$. 
Hermitian nilmanifolds whose Strominger-Bismut connection is  K\"ahler-like are classified in \cite{ZZ1}. A classification of compact non-K\"ahler Hermitian manifolds with  K\"ahler-like Strominger-Bismut connection
in complex dimension 3 and those with degenerate torsion in higher dimensions is given in \cite{YZZ}. For complex dimension three \cite[Theorem~5]{YZZ} shows that if the space is CYT and the Strominger-Bismut connection is  K\"ahler-like then the Strominger-Bismut connection is flat and it is conjectured that this is true in higher dimensions provided the space is compact, \cite[Conjecture~2]{YZZ}.  This conjecture was confirmed very recently in \cite{ZZ3}.

 Some types of non-complex 6-manifolds have also been invented  in the
string theory due to the mirror symmetry and T-duality
\cite{KST,GLMW,GM,Car,Car1,KLMS,KML,HLM}. 
Almost Hermitian manifolds with totally skew-symmetric Nijenhuis
tensor arise as target spaces of a class of (2,0)-supersymmetric
two-dimensional sigma models \cite{Pap}. For the consistency of
the theory, the Nijenhuis tensor has to be parallel with respect
to the torsion connection with holonomy contained in $SU(n)$. The known models are group manifolds as well as the nearly K\"ahler spaces. 
In general, an almost Hermitian manifold does not admit a metric connection preserving the almost Hermitian structure and having  totally skew-symmetric torsion. It is shown in \cite[Theorem~10.1]{FI} that this is equivalent to the condition that the Nijenhuis tensor of type (0,3) 
 is totally skew-symmetric, i.e. this is  the class $G_1$ in Gray-Hervella classification \cite{GrH}. In this case,  the connection is unique. This class contains the Hermitian manifolds with Strominger-Bismut connection as well as the nearly K\"ahler spaces. In the nearly K\"ahler case the torsion connection $\sb$ is the characteristic connection considered by Gray, \cite{gray} and the torsion $T$ and the Nijenhuis tensor are $\sb-$parallel, $\sb T=\sb N = 0$ (see e.g. \cite{Kir,BM}). If the holonomy of the torsion connection is contained in $SU(n)$, that is,  the Ricci 2-form $\rho$ of $\sb$  representing the first Chern class is identically zero,  then we have the notion of \emph{Almost Calabi-Yau manifold with torsion (briefly ACYT)}.

 As it was pointed out in \cite{Str}, the existence of a parallel spinor with respect to a metric connection $\sb$ with torsion 3-form, in dimension six,  leads to the restriction that its holonomy group has to be contained in $SU(3)$. This means one has to consider an $SU(3)$-structure, i.e. an almost Hermitian manifold $(M^6,g,J)$ with topologically trivial canonical bundle trivialized by a (3,0) with respect to $J$ form  $\Psi=\Psi^++\sqrt{-1}\Psi^-$\ endowed with a metric connection preserving the $SU(3)$ structure with totally skew-symmetric torsion. 
 Our main purposes are to investigate the curvature properties of the torsion connection on an ACYT 6-manifold.

We observe in Theorem~\ref{thnnew}  that on any compact 6-dimensional ACYT the Nijenhuis tensor is $\sb-$ parallel and deduce in Corollary~\ref{corclosT} that a compact ACYT 6-manifold has closed torsion exactly when $Ric =-\sb\theta$. This helps to prove our first main result,
\begin{thrm}\label{closTt}
Let $(M,g,J,\Psi)$ be a 6-dimensional  compact  ACYT space with closed torsion, $dT=0$.

The following conditions are equivalent:
\begin{itemize}
\item[a).] The torsion connection  is Ricci flat, $Ric=0$;
\item[b).] The norm of the torsion is constant, $d||T||^2=0$;
\item[c).] The Lee form is $\nabla-$parallel, $\sb\theta=0$;
\item[d).] The torsion connection has vanishing scalar curvature, $Scal=0$;
\item[e).] The Riemannian scalar curvature is constant, $Scal^g=const.$;
\item[f).] The Lee form is co-closed, $\delta\theta=0$.
\end{itemize}
In each of the six cases above the torsion is co-closed and therefore it is  a harmonic 3-form.
\end{thrm}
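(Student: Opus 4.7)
The starting point is the pointwise identity
\begin{equation*}
Ric=-\sb\theta,
\end{equation*}
supplied by Corollary~\ref{corclosT} under the standing hypothesis $dT=0$. This already delivers (a)$\Leftrightarrow$(c). Taking the $g$-trace, and noting that the contribution of the skew torsion to the trace vanishes, gives $Scal=\delta\theta$ (up to a sign convention), hence (d)$\Leftrightarrow$(f). The antisymmetric part of the identity, combined with the classical formula expressing the skew part of the Ricci tensor of a metric connection with totally skew torsion as $\tfrac{1}{2}\delta^g T$, yields
\begin{equation*}
\delta^g T \;=\; \theta\lrcorner T - d\theta.
\end{equation*}
Consequently, once (c) $\sb\theta=0$ is established, the vanishing of the antisymmetric part of $\sb\theta$ gives $d\theta=\theta\lrcorner T$, so $\delta^g T=0$. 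Together with $dT=0$ this makes $T$ harmonic, which is the concluding assertion of the theorem.

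To couple in the Riemannian scalar $Scal^g$, I invoke the standard comparison $Scal^g=Scal+\tfrac{3}{2}\|T\|^2$, which together with $Scal=\delta\theta$ produces
\begin{equation*}
Scal^g=\delta\theta+\tfrac{3}{2}\|T\|^2.
\end{equation*}
Integrating over the compact manifold $M$ annihilates $\delta\theta$ by Stokes and yields $\int_M Scal^g=\tfrac{3}{2}\int_M\|T\|^2$. At the pointwise level, once (f) is in hand the identity reduces to $Scal^g=\tfrac{3}{2}\|T\|^2$, which identifies (b) with (e). So the six-fold equivalence collapses to a single nontrivial step, which I take to be (f)$\Rightarrow$(c); everything else then follows from the two displayed identities combined with the trivial implications (c)$\Rightarrow$(d),(f) and (a)$\Leftrightarrow$(c).

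The heart of the argument, and the step I expect to be the main obstacle, is the Bochner-type bootstrap (f)$\Rightarrow$(c). The plan is to exploit the steady generalized gradient Ricci soliton structure of $M$, guaranteed by \cite[Prop.~8.14]{GFS} and \cite[Prop.~2.6]{GFJS}, whose Perelman-type conservation law controls a combination of the schematic form $Scal^g+2\delta\theta-|\theta|^2$ up to an additive constant; substituting the scalar comparison converts this into an algebraic relation between $\delta\theta$, $|\theta|^2$ and $\|T\|^2$. One then applies $\sb^*\sb$ to $Ric=-\sb\theta$, pairs the result with $\theta$, and integrates by parts on the compact $M$, keeping careful track of the torsion corrections to the second Bianchi identity for $\sb$ (which are controlled by $dT=0$) and using the $\sb$-parallelism of the Nijenhuis tensor from Theorem~\ref{thnnew} to cancel the would-be almost-complex obstructions. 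The expected output is a Weitzenb\"ock-type identity of the schematic form
\begin{equation*}
\int_M |\sb\theta|^2\,dv_g \;=\; \int_M \delta\theta\cdot\Phi(T,\theta)\,dv_g
\end{equation*}
for a tensorial expression $\Phi$. Each of (b), (d), (e), (f) then kills the right-hand side -- directly for (d) and (f), or via the displayed identity coupling $Scal^g$, $\delta\theta$ and $\|T\|^2$ for (b) and (e) -- forcing $\sb\theta=0$ and closing the chain. The most delicate technical point will be verifying that the non-integrable almost complex structure contributes no additional curvature obstruction in this Weitzenb\"ock formula; it is precisely for this cancellation that the $\sb$-parallelism of the Nijenhuis tensor from Theorem~\ref{thnnew} is indispensable.
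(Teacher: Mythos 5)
Your outline handles the easy implications correctly: $Ric=-\sb\theta$ (Corollary~\ref{corclosT}) gives (a)$\Leftrightarrow$(c), its trace gives $Scal=\delta\theta$ and hence (d)$\Leftrightarrow$(f), and once $\sb\theta=0$ one gets $\delta T_{ij}=\sb_i\theta_j-\sb_j\theta_i=0$, whence $T$ is harmonic. Two problems remain. A minor one: the scalar curvature comparison is $Scal^g=Scal+\tfrac14||T||^2$ (see \eqref{rics}), not $Scal+\tfrac32||T||^2$; and in any case the resulting relation $Scal^g=\delta\theta+\tfrac14||T||^2$ cannot by itself ``identify (b) with (e)'': it does so only after $\delta\theta\equiv 0$ is already known, which is the hard step.

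The decisive gap is in that hard step. A Weitzenb\"ock identity of your proposed shape $\int_M||\sb\theta||^2=\int_M\delta\theta\cdot\Phi(T,\theta)$ is annihilated by (d) and (f), which kill $\delta\theta$ pointwise, but not by (b) or (e): constancy of $||T||^2$ or of $Scal^g$ only gives $d(\delta\theta)=dScal^g$ resp. $d(\delta\theta)=-\tfrac14d||T||^2$, and $\int_M\delta\theta=0$ does not make $\int_M\delta\theta\cdot\Phi$ vanish. The paper's mechanism is different: substituting $Ric=-\sb\theta$, $Scal=\delta\theta$, $\delta T_{ij}=\sb_i\theta_j-\sb_j\theta_i$ and $dT=0$ into the contracted second Bianchi identity for $\sb$ from \cite[Proposition~3.5]{IS}, and manipulating with the Ricci identity, yields the pointwise elliptic relation
\begin{equation*}
\Delta\Big(\delta\theta-\tfrac16||T||^2\Big)+\theta_j\sb_j\Big(\delta\theta+\tfrac16||T||^2\Big)=||Ric||^2\ge 0,
\end{equation*}
to which the strong maximum principle applies on the compact $M$: under (b) it forces $\delta\theta=const=0$, under (f) it forces $d||T||^2=0$, and rewriting it via $Scal^g=\delta\theta+\tfrac14||T||^2$ handles (e) the same way. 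Only after $\delta\theta=0$ and $d||T||^2=0$ are both in hand does one run a Bochner argument on $||\theta||^2$ (using \eqref{su2} and the constancy of $||N||^2$ from Theorem~\ref{thnnew} to first get $d||\theta||^2=0$) to conclude $||\sb\theta||^2=0$, i.e.\ (c). This second-Bianchi/maximum-principle identity is the missing ingredient; without it, or a substitute of equal strength, your chain of equivalences does not close at (b) and (e).
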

In the complex case, $N=0$,  we have
\begin{cor}
A compact pluriclosed CYT 6-manifold  is Strominger-Bismut Ricci flat if and only if either the torsion has a constant norm or the Riemannian scalar curvature is constant.

In particular, the torsion is harmonic.
\end{cor}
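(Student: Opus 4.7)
The plan is to obtain this corollary as a direct specialization of Theorem~\ref{closTt} to the integrable case $N=0$. When the Nijenhuis tensor vanishes, the underlying almost Hermitian structure is Hermitian, the torsion connection of the $G_1$-class coincides with the Strominger-Bismut connection, and the ACYT condition (vanishing Ricci 2-form $\rho$ of $\sb$) reduces to the standard CYT condition. On a Hermitian manifold the pluriclosed hypothesis $\partial\bar\partial F=0$ is equivalent to $dT=0$ for the Strominger-Bismut connection, so the closed-torsion assumption of Theorem~\ref{closTt} is automatically satisfied and the theorem applies verbatim.

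I would then simply read off the equivalences $(a)\Leftrightarrow(b)\Leftrightarrow(e)$ of Theorem~\ref{closTt}: Strominger-Bismut Ricci flatness is equivalent to $d||T||^2=0$ and to $Scal^g=\text{const}$. This is exactly the assertion of the corollary. For the harmonicity, I would invoke the last sentence of Theorem~\ref{closTt}, which states that in each of the six equivalent cases one has $\delta T=0$. Combined with the pluriclosed assumption $dT=0$, this yields $\Delta T=(d\delta+\delta d)T=0$, so $T$ is a harmonic 3-form.

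Since the corollary is essentially a rewording of an already established theorem in the complex case, there is no genuine obstacle to the proof; the only item that warrants care is the dictionary between the ACYT/$G_1$ framework of Theorem~\ref{closTt} and the CYT/Hermitian framework of the corollary. This dictionary is standard: the existence and uniqueness of the Strominger-Bismut connection on a Hermitian manifold (as recalled in the introduction) place it within the Friedrich-Ivanov $G_1$ class, and the vanishing of $\rho$ for this connection is by definition what is meant by a CYT space, so no further verification is needed.
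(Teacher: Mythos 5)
Your proposal is correct and follows exactly the route the paper intends: the corollary is stated as the immediate specialization of Theorem~\ref{closTt} to the complex case $N=0$, where the torsion connection is the Strominger--Bismut connection and the pluriclosed condition $\partial\bar\partial F=0$ is equivalent to $dT=0$, so the equivalences a)~$\Leftrightarrow$~b)~$\Leftrightarrow$~e) and the co-closedness statement give everything, with harmonicity following from $dT=\delta T=0$.
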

We show in Theorem~\ref{inf} that any compact ACYT 6-manifold with closed torsion 3-form is a steady generalized gradient Ricci soliton and this condition is equivalent to a certain vector field to be parallel with respect to the torsion connection. We also find out that this vector field preserves the $SU(3)$ structure, thus extending the above-mentioned result in \cite{GFJS} for compact pluriclosed CYT space to compact ACYT space with closed torsion in dimension six.

Studying the curvature properties of the torsion connection on an ACYT 6-manifold we find sufficient conditions for the torsion 3-form to be closed and harmonic. 
\begin{thrm}\label{mainsu3}
Let $(M,g,J,\Psi)$ be a 6-dimensional ACYT space. 
The torsion connection $\sb$ preserving the $SU(3)$ structure 
is Ricci flat,  has curvature $R \in S^2\Lambda^2$ and the Nijenhuis tensor is of  constant norm,
\begin{equation}\label{s2l2}
R(X,Y,Z,V)=R(Z,V,X,Y),\qquad Ric(X,Y)=0, \qquad ||N||^2=const.
\end{equation}
if and only if the torsion  $T$ is $\LC$ and $\sb-$parallel,  
$\LC T=\sb T=0.$

In particular, the torsion 3-form is harmonic, 
the Nijenhuis tensor and the exterior derivative $dF$ of the K\"ahler 2-form are $\sb-$parallel, 
\begin{equation}\label{nsc}
\sb N=\sb dF=0.
\end{equation}
The curvature of the torsion connection obeys the Riemannian first Bianchi identity \eqref{RB} and vice-versa.
\end{thrm}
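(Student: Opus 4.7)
The plan is to establish a three-way equivalence between the curvature conditions \eqref{s2l2}, the double parallelism $\nabla^g T = \nabla T = 0$, and the Riemannian first Bianchi identity \eqref{RB} for the torsion connection. The main tools throughout are the twisted first Bianchi identity for a metric connection $\nabla$ with totally skew-symmetric torsion $T$,
\[ \mathfrak{S}_{X,Y,Z}\, R(X,Y,Z,V) \;=\; \mathfrak{S}_{X,Y,Z}\, (\nabla_X T)(Y,Z,V) \;+\; \sigma^T(X,Y,Z,V), \]
where $\sigma^T(X,Y,Z,V) = \mathfrak{S}_{X,Y,Z}\, g(T(X,Y), T(Z,V))$, together with the connection-difference identity
\[ (\nabla^g_X T - \nabla_X T)(Y,Z,V) \;=\; \tfrac{1}{2}\, \sigma^T(X,Y,Z,V), \]
which follows from $\nabla^g_X Y = \nabla_X Y - \tfrac{1}{2}T(X,Y)$ upon expanding the covariant derivatives of the $3$-form $T$ and using its total skew-symmetry.

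For the direction $\nabla^g T = \nabla T = 0 \Rightarrow \eqref{s2l2}$ I would first subtract the two parallelisms via the difference identity to force $\sigma^T \equiv 0$ pointwise. Substituting into twisted Bianchi gives $\mathfrak{S}_3 R = 0$, i.e.\ \eqref{RB}, and combined with the pair-skew symmetries this classically yields $R \in S^2\Lambda^2$. Since $\nabla^g T = 0$ makes $T$ a Levi-Civita parallel $3$-form, $dT = \delta T = 0$ and $T$ is harmonic; this kills the antisymmetric part of $Ric$, while the vanishing of the $SU(3)$-Ricci $2$-form $\rho$ of the ACYT structure combined with a contraction of twisted Bianchi kills the symmetric part, giving $Ric = 0$. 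Since $\nabla J = 0$ and the $J$-type splitting $T = N - JdF$ is $\nabla$-invariant, $\nabla T = 0$ propagates to $\nabla N = \nabla dF = 0$; in particular $\|N\|^2$ is constant.

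For the converse $\eqref{s2l2} \Rightarrow \nabla^g T = \nabla T = 0$: the symmetry $R \in S^2\Lambda^2$ combined with pair-skew symmetry and the standard cyclic juggling yields $\mathfrak{S}_3 R = 0$, so twisted Bianchi reduces to $\mathfrak{S}_3 \nabla T = -\sigma^T$. The hypothesis $Ric = 0$ gives $\delta T = 0$ as the vanishing antisymmetric Ricci and, combined with the ACYT identity expressing the symmetric part of $Ric$ in terms of $\nabla T$ and $\rho=0$, provides a further trace constraint on $\nabla T$. The pointwise assumption $\|N\|^2 = \mathrm{const}$ then supplies the decisive input: differentiating it and using $\nabla J = 0$ together with the $SU(3)$-type decomposition of $T$ and $\nabla T$ (the Nijenhuis block of type $(3,0)+(0,3)$ and $JdF$ of type $(2,1)+(1,2)$), one obtains an algebraic identity that separately forces $\nabla N = 0$ and $\nabla(JdF) = 0$, hence $\nabla T = 0$. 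The difference identity then returns $\nabla^g T = 0$ and $T$ is harmonic. The \emph{vice-versa} statement on \eqref{RB} is obtained \emph{en route}: $\mathfrak{S}_3 R = 0$ is established in both directions, and conversely \eqref{RB} combined with the ACYT structure runs exactly the same argument.

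The principal difficulty lies in the converse direction. Since $M$ is not assumed compact, integration-by-parts is unavailable and $\nabla T = 0$ must be extracted purely pointwise from the three curvature hypotheses. The crucial ingredient should be a pointwise $SU(3)$-representation-theoretic identity on the $6$-manifold relating $|\nabla N|^2$, $|\nabla (JdF)|^2$, $Ric$ and $\sigma^T$ (derivable from the structural formulae for ACYT built in the earlier sections of the paper), which turns the constancy of $\|N\|^2$ together with $Ric = 0$ and $R \in S^2\Lambda^2$ into a forcing condition for both $J$-type blocks of $\nabla T$ to vanish separately.
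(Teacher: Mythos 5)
Your forward direction ($\LC T=\sb T=0\Rightarrow$ \eqref{s2l2}) is sound and essentially the paper's route: the two parallelisms force $\sigma^T=0$ via \eqref{tgt}, hence $dT=0$ via \eqref{dh}, the first Bianchi identity \eqref{RB} follows (the paper invokes Theorem~\ref{tFBI}), and $Ric=0$ then comes from $\rho=0$. The converse, however, contains a genuine error at its very first step. You claim that $R\in S^2\Lambda^2$ ``combined with pair-skew symmetry and the standard cyclic juggling yields $\mathfrak{S}_3R=0$.'' This implication is false: the symmetries \eqref{r1} together with \eqref{r4} do \emph{not} imply \eqref{RB} (the paper states this explicitly in the Remark after \eqref{r4}; the obstruction is the $\Lambda^4$-component of the Bianchi map, e.g.\ $R=\omega\in\Lambda^4$ satisfies \eqref{r1} and \eqref{r4} but has $\mathfrak{S}_3R=3\omega\neq0$). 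The classical algebraic deduction runs only in the direction first Bianchi $\Rightarrow$ pair symmetry. Deriving \eqref{RB} from \eqref{s2l2} is precisely the nontrivial content of the theorem, and in the paper it is obtained only \emph{after} proving $\sb T=dT=0$, by appealing to Theorem~\ref{tFBI}.

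The second gap is that the decisive step of the converse --- extracting $\sb T=0$ from \eqref{s2l2} --- is left as a conjectural ``algebraic identity that separately forces $\sb N=0$ and $\sb(JdF)=0$.'' No such single identity is exhibited, and the paper's actual mechanism is quite different and substantially longer: (i) $\|N\|^2=const$ gives $d\lambda=d\mu=0$ hence $\sb N=0$ via the explicit form $N=\lambda\ps+\mu\sp$ of \eqref{nn1} and the relations \eqref{new2} (Proposition~\ref{tnnew}); (ii) the 4-form property of $\sb T$ (equivalent to \eqref{r4}) makes $\sb\theta$ symmetric and $J$-invariant (Theorem~\ref{mainsu}); (iii) a pointwise Bochner-type computation --- valid without compactness because one first shows $\|\theta\|^2=const$ --- yields $\sb\theta=0$ (Lemma~\ref{sbtheta}); (iv) the traces \eqref{nth} then place $X\lrcorner\sb T$ and $X\lrcorner dT$ in $\Lambda^3_{12}$, and the key algebraic Proposition~\ref{4-for} (a 4-form all of whose contractions lie in $\Lambda^3_{12}$ vanishes) forces $\sb T=dT=0$, whence $\sigma^T=0$ and $\LC T=0$ by \eqref{tgt}. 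Your proposal correctly identifies where the difficulty sits, but neither supplies the missing representation-theoretic input (Proposition~\ref{4-for}) nor the Lee-form analysis that feeds it, so the converse direction is not established.
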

Taking into account  Theorem~\ref{thnnew} below, in the compact case,  Theorem~\ref{mainsu3} can be stated in the form
\begin{thrm}\label{cmainsu3}
Let $(M,g,J,\Psi)$ be a compact 6-dimensional ACYT space.
The torsion connection $\sb$ preserving the $SU(3)$ structure 
has curvature $R \in S^2\Lambda^2$ with vanishing Ricci tensor 
\begin{equation}\label{cs2l2}
R(X,Y,Z,V)=R(Z,V,X,Y),\qquad Ric(X,Y)=0. 
\end{equation}
if and only if the torsion 3-form $T$ is $\LC$ and  $\sb-$parallel, $\LC T=\sb T=0.$

In particular the torsion 3-form is harmonic,  $dT=\delta T=0$ and  \eqref{nsc}  holds.

\noindent The curvature of the torsion connection obeys the Riemannian first Bianchi identity \eqref{RB} and vice-versa.
\end{thrm}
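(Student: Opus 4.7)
The plan is to derive Theorem \ref{cmainsu3} from the non-compact Theorem \ref{mainsu3} by showing that the auxiliary hypothesis $||N||^2=\textrm{const}$ appearing there is automatic in the compact setting. The key ingredient is Theorem \ref{thnnew}, which asserts that on any compact $6$-dimensional ACYT space the Nijenhuis tensor is $\sb$-parallel. Since $\sb$ is a metric connection, $\sb N=0$ gives $d||N||^2=0$, so the norm of $N$ is globally constant, and the three-part condition \eqref{s2l2} of Theorem \ref{mainsu3} collapses in the compact case to the two-part condition \eqref{cs2l2}.

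For the forward implication, I would assume $R\in S^2\Lambda^2$ and $Ric=0$, append the constant-norm statement $||N||^2=\textrm{const}$ supplied for free by Theorem \ref{thnnew}, and invoke Theorem \ref{mainsu3} to conclude $\LC T=\sb T=0$ together with the parallelism statements \eqref{nsc}. The reverse implication does not even need the compactness step: $\LC T=\sb T=0$ returns via Theorem \ref{mainsu3} the full condition \eqref{s2l2}, and in particular the pair-symmetry and the Ricci-flatness \eqref{cs2l2}. The harmonicity $dT=\delta T=0$ is then an automatic consequence of $\LC T=0$, because $dT$ and $\delta T=-\textrm{tr}_g\LC T$ arise by alternating and tracing $\LC T$ respectively.

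The equivalence of \eqref{cs2l2} with the Riemannian first Bianchi identity \eqref{RB} transfers verbatim from Theorem \ref{mainsu3}, since in the compact setup the additional condition $||N||^2=\textrm{const}$ is always present and the two theorems thereby coincide. The only delicate point, and really the main obstacle worth double-checking, is to make sure that Theorem \ref{thnnew} delivers the genuinely stronger conclusion $\sb N=0$, and not just constancy of some scalar invariant built from $N$, so that the norm-constancy hypothesis of Theorem \ref{mainsu3} is supplied at no cost; once that is confirmed, Theorem \ref{cmainsu3} becomes a compact-case reformulation of Theorem \ref{mainsu3} with no new analytic content.
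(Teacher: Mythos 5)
Your proposal is correct and is exactly the paper's argument: the authors dispose of Theorem~\ref{cmainsu3} in one line by combining Theorem~\ref{mainsu3} with Theorem~\ref{thnnew}, the latter supplying $\sb N=0$ (hence $||N||^2=const.$) on any compact ACYT $6$-manifold so that the extra hypothesis of \eqref{s2l2} is automatic. Your ``delicate point'' is indeed covered, since Theorem~\ref{thnnew} explicitly asserts the full parallelism $\sb N=0$ and not merely constancy of a scalar invariant.
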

We can reformulate the above result as follows
\begin{cor}\label{cco1}
On a compact  ACYT manifold in dimension 6, the next conditions are equivalent:
\begin{itemize}
\item The torsion connection  has curvature $R \in S^2\Lambda^2$ with vanishing Ricci tensor;
\item The curvature of the torsion connection is  K\"ahler like;
\item The torsion is closed and parallel,  $dT=\LC T=\sb T=0$.
\end{itemize}
\end{cor}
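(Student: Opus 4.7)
The proof is essentially a direct repackaging of Theorem~\ref{cmainsu3}, so the plan is to extract the three-way equivalence from that theorem rather than produce independent arguments. Compactness enters only through Theorem~\ref{thnnew}, which supplies parallelism of the Nijenhuis tensor and thereby removes the auxiliary hypothesis $\|N\|^2=\mathrm{const.}$ that appears in the non-compact version Theorem~\ref{mainsu3}. So the task is really bookkeeping: match each bullet of the corollary to a clause of Theorem~\ref{cmainsu3}.

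First I would derive the equivalence of the first and third bullets. The main biconditional of Theorem~\ref{cmainsu3} states that $R\in S^2\Lambda^2$ with $Ric=0$ if and only if $\LC T=\sb T=0$. Its \emph{in particular} clause further asserts that in either case the torsion is harmonic, $dT=\delta T=0$, and \eqref{nsc} holds. Thus the first bullet implies $dT=\LC T=\sb T=0$, i.e.\ the third; conversely the third bullet contains $\LC T=\sb T=0$, and the reverse direction of Theorem~\ref{cmainsu3} then delivers $R\in S^2\Lambda^2$ and $Ric=0$.

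Next I would handle the K\"ahler-like bullet. The closing sentence of Theorem~\ref{cmainsu3} (``and vice-versa'') asserts precisely that the curvature of $\sb$ satisfies the Riemannian first Bianchi identity \eqref{RB} if and only if $R\in S^2\Lambda^2$ with $Ric=0$. Composing this with the previous step yields the full three-way equivalence.

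There is no genuine obstacle in the corollary itself; all of the curvature and Bochner-type work lies upstream in Theorem~\ref{cmainsu3} and in the parallelism of $N$ from Theorem~\ref{thnnew}. The only point requiring care is interpreting ``K\"ahler-like'' consistently as the Riemannian first Bianchi identity \eqref{RB} used throughout the paper, and noting that in the compact setting $\sb N=0$ automatically makes $\|N\|^2$ constant, so the extra hypothesis from Theorem~\ref{mainsu3} need not be listed among the equivalent conditions.
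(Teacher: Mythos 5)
Your proposal is correct and matches the paper's own treatment: the paper offers Corollary~\ref{cco1} explicitly as a reformulation of Theorem~\ref{cmainsu3}, with the first and third bullets tied together by the main biconditional plus its ``in particular'' clause ($dT=\delta T=0$), and the K\"ahler-like bullet handled by the closing ``and vice-versa'' sentence together with the observation that \eqref{r1} and \eqref{kelprop} hold automatically for the torsion connection, so K\"ahler-like reduces to \eqref{RB}. Your remark that compactness enters only via Theorem~\ref{thnnew} to discharge the $\|N\|^2=\mathrm{const.}$ hypothesis is exactly the paper's reasoning.
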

Theorem~\ref{mainsu3} 
 helps to generalize  \cite[Theorem~5]{YZZ} as follows
\begin{thrm}\label{cytt}
Let $M^6$ be a CYT whose Strominger-Bismut connection $\sb$ satisfies \eqref{cs2l2}. Then it is Strominger-Bismut   K\"ahler-like and therefore is Strominger-Bismut flat.
\end{thrm}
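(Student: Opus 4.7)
The plan is to deduce Theorem~\ref{cytt} as an essentially immediate corollary of Theorem~\ref{mainsu3} together with the dimension-3 result \cite[Theorem~5]{YZZ}. The point is that the CYT hypothesis is just the special case $N=0$ of ACYT, so all the analytic hypotheses of Theorem~\ref{mainsu3} become automatic.

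First I would observe that, by definition of CYT, the almost complex structure $J$ is integrable, so the Nijenhuis tensor vanishes identically on $M^6$. In particular $\|N\|^2\equiv 0$ is constant, and the constancy-of-norm hypothesis appearing in the non-compact version Theorem~\ref{mainsu3} is trivially satisfied. The remaining two hypotheses of Theorem~\ref{mainsu3}, namely $R\in S^2\Lambda^2$ (pair-symmetry) and $\mathrm{Ric}=0$, are precisely assumption \eqref{cs2l2}. So all three conditions in \eqref{s2l2} hold.

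Next I would invoke Theorem~\ref{mainsu3} to conclude that the torsion is both Levi-Civita and $\sb$-parallel, $\LC T=\sb T=0$, and that the curvature of $\sb$ obeys the Riemannian first Bianchi identity. Combined with the assumed pair-symmetry $R(X,Y,Z,V)=R(Z,V,X,Y)$, this is exactly the statement that the Strominger-Bismut connection is K\"ahler-like in the sense of Angella-Otal-Ugarte-Villacampa, which proves the first assertion.

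For the second assertion, I would directly cite \cite[Theorem~5]{YZZ}, which asserts that in complex dimension three a CYT space whose Strominger-Bismut connection is K\"ahler-like must in fact be Strominger-Bismut flat. Applying this to our $M^6$ gives the conclusion. Since the substance of the argument is packaged into Theorem~\ref{mainsu3} and the cited dimension-3 result, there is no real obstacle here; the only step requiring care is verifying that the CYT hypothesis really does supply the constant-Nijenhuis-norm condition needed to use the non-compact form of Theorem~\ref{mainsu3} (rather than having to assume compactness and appeal to Theorem~\ref{cmainsu3}), and this is immediate from $N\equiv 0$.
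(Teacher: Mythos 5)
Your proposal is correct and follows exactly the route the paper intends (the paper gives no separate proof of Theorem~\ref{cytt}, only the remark that it follows from Theorem~\ref{mainsu3}): since $N\equiv 0$ for a CYT space, the constant-norm hypothesis of Theorem~\ref{mainsu3} is automatic, so \eqref{cs2l2} yields the Riemannian first Bianchi identity, hence the K\"ahler-like property, and then \cite[Theorem~5]{YZZ} gives flatness. No discrepancy with the paper's approach.
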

We recall, \cite {GFS}, that the fixed points of the generalized Ricci flow are Ricci flat metric connections with harmonic torsion 3-form, $Ric=dT=\delta T=0$. 
In this direction, our results show that a 6-dimensional ACYT manifold with a constant norm of the Nijenhuis tensor  (in particular any compact ACYT manifold)  with Ricci flat torsion connection having curvature $R\in S^2\Lambda^2$ is a fixed point of the corresponding generalized Ricci flow. In particular, if the curvature of the torsion connection satisfies the Riemannian first Bianchi identity then it is a fixed point of the generalized Ricci flow.  In the compact case, any ACYT 6-manifold with closed torsion and either constant Riemannian scalar curvature or constant norm of the torsion is a fixed point of the generalized Ricci flow.

Note that Hermitian and almost Hermitian manifolds of type $G_1$ with $\sb-$parallel torsion 3-form  are investigated in  \cite{AFS,Scho}, recently in \cite{CMS,ZZ2,NZ1}  and a large number of examples are given there. 
On the Hermitian manifold, $N=0$, the condition $\sb T=0$ is precisely  expressed in terms of the curvature of the Strominger-Bismut connection $\sb$ and its traces ( \cite[Theorem~1.1]{ZZ2}). More generally, Riemannian manifolds equipped with metric connection $\sb$ with $\sb-$parallel  torsion 3-form are investigated in \cite{AFF,AFer,Ch1,Ch2}.
It is conjectured very recently that if a complete Hermitian manifold has Strominger-Bismut parallel curvature and torsion and zero all Ricci tensors then it has to be Strominger-Bismut flat, \cite[Conjecture~1.8]{NZ1}.  A special case with the additional conditions that the space is K\"ahler-like and CYT is proved in \cite[Proposition~6.8]{NZ1}. It is worth mentioning  \cite[Theorem~4.1]{AFF} which states that an irreducible complete and simply connected Riemannian manifold of dimension bigger or equal to $5$ with $\sb-$parallel and closed torsion 3-form, $\sb T=dT=0$ (which is equivalent to $\sb T=\sigma^T=0$ due to \eqref{dh}  below), is a simple compact Lie group or its dual non-compact symmetric space with biinvariant metric, and, in particular, the torsion connection is the flat Cartan connection.

\begin{rmrk}
 A combination of the above-mentioned  \cite[Theorem~1]{ZZ} with \cite[Theorem~4.1]{AFF} shows that an irreducible complete and simply connected Hermitian manifold of dimension bigger or equal to 6 which is K\"ahler-like should be a simple compact Lie group with flat  Strominger-Bismut connection. Moreover, due to Theorem~\ref{cmainsu3} and \cite[Theorem~4.1]{AFF},   irreducible simply connected compact 6-dimensional  ACYT space satisfying conditions \eqref{cs2l2}  does not exist since there are no simple compact Lie groups of dimension six.
\end{rmrk}
We also note that the main result  \cite[Theorem~1]{ZZ}  combined with \cite[Corollary~4.7]{IP2} yields in a straightforward way the next
\begin{cor}
Let $M$ be a 2n-dimensional compact Hermitian manifold and its Strominger-Bismut connection is  K\"ahler-like. If $M$ is conformally balanced (the Lee form is an exact form)  then the space is a K\"ahler manifold.
\end{cor}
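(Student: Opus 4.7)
The proof is a direct concatenation of the two results cited in the statement, and the plan is simply to assemble them in the right order.

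First, I would apply \cite[Theorem~1]{ZZ} to the hypothesis that $M$ is a compact Hermitian manifold whose Strominger-Bismut connection is K\"ahler-like. That theorem yields at once that the torsion 3-form is closed and $\sb$-parallel, namely $dT = \sb T = 0$. In particular $\|T\|^2$ is constant on $M$, and the Lee form $\theta$ can be expressed in the standard way through $T$ (or, equivalently, through $dF$), so that all of the structural data of the Hermitian $G_1$-type manifold is under control.

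Second, I would feed the condition $\sb T = 0$, together with the conformally balanced assumption that $\theta$ is exact, into \cite[Corollary~4.7]{IP2}. That corollary (from the Dolbeault vanishing / integration-by-parts circle for the Strominger-Bismut connection on compact Hermitian manifolds) is set up precisely to conclude under these hypotheses that the torsion must vanish, i.e.\ $T = 0$; but $T = 0$ on a Hermitian manifold is the K\"ahler condition, which is the desired conclusion.

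The main obstacle, such as it is, lies not in any calculation but in checking that the hypotheses of \cite[Corollary~4.7]{IP2} are matched by the output of \cite[Theorem~1]{ZZ}: one must verify that the notion of Lee form and the normalisation of the torsion used in \cite{IP2} coincide with those produced by the K\"ahler-like conclusion of \cite{ZZ}, and that the compactness hypothesis feeds correctly into the integration argument underlying \cite[Corollary~4.7]{IP2} (typically an identity of the form $\int_M \|T\|^2 e^{-f}\, dv_g = 0$, where $\theta = df$, obtained from a divergence identity and the parallelism of $T$). Once those conventions are aligned, no further computation is needed, and the corollary follows.
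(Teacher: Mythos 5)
Your proposal coincides with the paper's own argument: the authors obtain this corollary in exactly the same way, by feeding the conclusion $dT=\nabla T=0$ of \cite[Theorem~1]{ZZ} into \cite[Corollary~4.7]{IP2}, and they supply no further detail beyond calling the combination straightforward. Your guess at the mechanism behind the second citation (a weighted divergence identity with weight $e^{-f}$, $\theta=df$, forcing $\int_M\|T\|^2e^{-f}\,dv_g=0$ and hence $T=0$) is also consistent with the trace identity \eqref{su2} recorded in the paper, so nothing is missing relative to the paper's proof.
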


\begin{conv}
Everywhere in the paper, we will make no difference between tensors and the corresponding forms via the metric as well as we will use Einstein summation conventions, i.e. repeated Latin indices are summed over.
\end{conv}

\section{Preliminaries}
In this section, we recall some known curvature properties of a metric connection with totally skew-symmetric torsion on Riemannian manifold as well as
the notions and existence of a metric linear connection preserving a given Almost Hermitian structure and having skew-symmetric torsion from \cite{I,FI,IS}.
\subsection{Metric connection with totally skew-symmetric  torsion and its curvature}
Let $(M,g,\sb)$ be a Riemannian manifold with a metric connection $\sb$ and $T(X,Y)=\sb_XY-\sb_YX-[X,Y]$ be its torsion. The torsion of type (0,3) is denoted by the same letter and it is defined by $T(X,Y,Z)=g(T(X,Y),Z)$. The torsion is totally skew-symmetric if $T(X,Y,Z)=-T(X,Z,Y)$.

On a Riemannian manifold $(M,g)$ of dimension $n$ any metric connection $\sb$ with totally skew-symmetric torsion $T$ is connected with the Levi-Civita connection $\sb^g$ of the metric $g$ by
\begin{equation}\label{tsym}
\sb^g=\sb- \frac12T.
\end{equation}
The exterior derivative $dT$ has the following  expression (see e.g. \cite{I,IP2,FI})
\begin{equation}\label{dh}
\begin{split}
dT(X,Y,Z,V)=(\nabla_XT)(Y,Z,V)+(\nabla_YT)(Z,X,V)+(\nabla_ZT)(X,Y,V)\\+2\sigma^T(X,Y,Z,V)-(\nabla_VT)(X,Y,Z),
 \end{split}
 \end{equation}
where the 4-form $\sigma^T$, introduced in \cite{FI}, is defined by
$ \sigma ^T(X,Y,Z,V)=\frac12\sum_{j=1}^n(e_j\lrcorner T)\wedge(e_j\lrcorner T)(X,Y,Z,V)$, 
   $(e_a\lrcorner T)(X,Y)=T(e_a,X,Y)$ is the interior multiplication and $\{e_1,\dots,e_n\}$ is a  basis. The properties of the 4-form $\sigma^T$ are studied in detail in \cite{AFF} where it is shown that $\sigma^T$ measures the `degeneracy' of the 3-form $T$.

The identity \eqref{tsym}  yields
\begin{equation}\label{tgt}
\LC T=\sb T+\frac12\sigma^T.
\end{equation}
 For the curvature of $\sb$ we use the convention $ R(X,Y)Z=[\nabla_X,\nabla_Y]Z -
 \nabla_{[X,Y]}Z$ and $ R(X,Y,Z,V)=g(R(X,Y)Z,V)$. It has the well-known properties
 \begin{equation}\label{r1}
 R(X,Y,Z,V)=-R(Y,X,Z,V)=-R(X,Y,V,Z).
 \end{equation}
   The first Bianchi identity for $\nabla$ can be written in the  form (see e.g. \cite{I,IP2,FI})
 \begin{equation}\label{1bi}
 \begin{split}
 R(X,Y,Z,V)+ R(Y,Z,X,V)+ R(Z,X,Y,V)\\
 =dT(X,Y,Z,V)-\sigma^T(X,Y,Z,V)+(\nabla_VT)(X,Y,Z).
 \end{split}
 \end{equation}
It is proved in \cite[p.307]{FI} that the curvature of  a metric connection $\sb$ with totally skew-symmetric torsion $T$  satisfies also the  identity
 \begin{equation*}
 \begin{split}
 R(X,Y,Z,V)+ R(Y,Z,X,V)+ R(Z,X,Y,V)-R(V,X,Y,Z)-R(V,Y,Z,X)-R(V,Z,X,Y)\\
 =\frac32dT(X,Y,Z,V)-\sigma^T(X,Y,Z,V),
 \end{split}
 \end{equation*}
 which combined with \eqref{1bi} yields
 \begin{equation}\label{1bi1}
 \begin{split}
R(V,X,Y,Z)+R(V,Y,Z,X)+R(V,Z,X,Y)= -\frac12dT(X,Y,Z,V)+(\nabla_VT)(X,Y,Z)
 \end{split}
 \end{equation}
\begin{dfn} We say that the curvature $R$ satisfies the Riemannian first Bianchi identity if
\begin{equation}\label{RB}
R(X,Y,Z,V)+R(Y,Z,X,V)+R(Z,X,Y,V)=0.
\end{equation}
\end{dfn}
 It is well known algebraic fact that \eqref{r1} and \eqref{RB} imply $R\in S^2\Lambda^2$, i.e.
 \begin{equation}\label{r4}
 R(X,Y,Z,V)=R(Z,V,X,Y)
 \end{equation}
 holds.
 \begin{rmrk}
 Note that, in general, \eqref{r1} and \eqref{r4} do not imply \eqref{RB}.
 \end{rmrk}
It is known that a metric connection $\sb$ with totally skew-symmetric torsion $T$ has curvature $R\in S^2\Lambda^2$, i.e. it satisfies \eqref{r4}  if and only if the covariant derivative of the torsion with respect to the torsion connection  is a 4-form.
\begin{lemma}\cite[Lemma~3.4]{I} The next equivalences hold for a metric connection with torsion 3-form
\begin{equation}\label{4form}
(\sb_XT)(Y,Z,V)=-(\sb_YT)(X,Z,V) \Longleftrightarrow R(X,Y,Z,V)=R(Z,V,X,Y) ) \Longleftrightarrow dT=4\LC T.
\end{equation}
\end{lemma}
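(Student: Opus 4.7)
The plan is to recast condition (A), $(\sb_X T)(Y,Z,V)=-(\sb_Y T)(X,Z,V)$, in the cleaner equivalent form that $\sb T$ is a 4-form; this reformulation is immediate since $T$ is already skew in its three arguments. The two remaining equivalences can then be handled separately.

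For (A) $\iff$ ($dT=4\LC T$), I would combine \eqref{tgt}, $\LC T=\sb T+\tfrac12\sigma^T$, with the fact that $\sigma^T$ is always a 4-form: hence $\LC T$ is a 4-form iff $\sb T$ is. Since $\LC$ is torsion-free, $dT$ is the alternating sum of four terms of the form $(\LC_\cdot T)(\cdot,\cdot,\cdot)$, and this sum collapses to $4\LC T$ precisely when $\LC T$ is itself totally antisymmetric.

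For the converse of (A) $\iff$ ($R\in S^2\Lambda^2$), assume $R(X,Y,Z,V)=R(Z,V,X,Y)$; using pair-exchange on each term of the left-hand side of \eqref{1bi1} and then applying the two skew symmetries \eqref{r1} converts that left-hand side into $-B(X,Y,Z,V)$, where $B(X,Y,Z,V):=R(X,Y,Z,V)+R(Y,Z,X,V)+R(Z,X,Y,V)$ is the Bianchi cyclic sum from \eqref{1bi}. Equating the resulting expression with \eqref{1bi} eliminates the curvature and yields $(\sb_V T)(X,Y,Z)=-\tfrac14 dT(X,Y,Z,V)+\tfrac12\sigma^T(X,Y,Z,V)$, whose right-hand side is manifestly a 4-form in $(X,Y,Z,V)$; hence $\sb T$ is a 4-form.

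For the forward direction, suppose $\sb T$ is a 4-form. Then the right-hand side of \eqref{1bi}, $dT-\sigma^T+(\sb_V T)(X,Y,Z)$, is a sum of three 4-forms in $(X,Y,Z,V)$, so $B$ is itself a 4-form and in particular satisfies the pair-exchange symmetry $B(X,Y,Z,V)=B(Z,V,X,Y)$. Expanding $B(Z,V,X,Y)$ and simplifying via \eqref{r1} gives $B(X,Y,Z,V)-B(Z,V,X,Y)=S(X,Y,Z,V)+S(Y,Z,X,V)$, with the pair-exchange defect $S(A,B,C,D):=R(A,B,C,D)-R(C,D,A,B)$; therefore $S(X,Y,Z,V)=-S(Y,Z,X,V)$, and iterating this cyclic relation three times telescopes to $S=-S$, forcing $S\equiv 0$ and hence $R\in S^2\Lambda^2$. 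The main technical step, and the point that requires some care, is extracting the cyclic identity on $S$ from the pair-exchange symmetry of the 4-form $B$ by a judicious use of the two skew symmetries of $R$; everything else is bookkeeping with the identities already recorded in the preliminaries.
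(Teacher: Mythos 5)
Your proposal is correct. The paper itself gives no proof of this lemma --- it is quoted from \cite[Lemma~3.4]{I} --- but your argument is a valid, self-contained derivation from the identities the paper does record: the equivalence of (A) with ``$\sb T$ is a 4-form'' is immediate from the skew-symmetry of $T$; the equivalence with $dT=4\LC T$ follows from \eqref{tgt} together with the torsion-free expression of $d$ exactly as you say; and both directions of the curvature equivalence check out. In the direction $R\in S^2\Lambda^2\Rightarrow$ (A), pair-exchanging each term of the left side of \eqref{1bi1} and applying \eqref{r1} does produce $-\bigl[R(X,Y,Z,V)+R(Y,Z,X,V)+R(Z,X,Y,V)\bigr]$, and eliminating the Bianchi sum against \eqref{1bi} gives $(\sb_VT)(X,Y,Z)=-\tfrac14 dT(X,Y,Z,V)+\tfrac12\sigma^T(X,Y,Z,V)$, whose right side is alternating, so $\sb T$ is a 4-form. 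In the converse direction, the identity $B(X,Y,Z,V)-B(Z,V,X,Y)=S(X,Y,Z,V)+S(Y,Z,X,V)$ holds (the third pair of terms cancels via \eqref{r1}), and iterating the resulting relation $S(X,Y,Z,V)=-S(Y,Z,X,V)$ over the order-3 cycle of the first three slots forces $S\equiv 0$. This is essentially the standard argument, and it uses nothing beyond the paper's preliminaries.
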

We need the very recent result from \cite{IS}
\begin{thrm} \cite[Theorem~1.2]{IS}\label{tFBI}
A metric connection $\sb$ with torsion 3-form $T$ satisfies the Riemannian first Bianchi identity exactly when the next identities hold
\begin{equation*}
 dT=-2\nabla T=\frac23\sigma^T.
\end{equation*}
In this case, the torsion 3-form $T$ is parallel with respect to a metric connection with torsion 3-form $\frac13T$ \cite{AF} and therefore has a constant norm, $||T||^2=const.$
\end{thrm}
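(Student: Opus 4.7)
The plan is to derive both claimed identities by combining the two curvature-cyclic formulas \eqref{1bi} and \eqref{1bi1} with the standard algebraic fact that \eqref{r1} and \eqref{RB} together force the pair-exchange symmetry \eqref{r4}. With that symmetry in hand, the sum in \eqref{1bi1} can be re-expressed as the cyclic sum appearing in \eqref{RB}, and the whole question reduces to linear algebra in a small system of relations among $dT$, $\sb T$ and $\sigma^T$.

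For the forward direction, assume \eqref{RB} holds. I would first show that the left-hand side of \eqref{1bi1} vanishes: using \eqref{r1} to pull $V$ out of the first slot, the sum becomes $-[R(X,V,Y,Z)+R(Y,V,Z,X)+R(Z,V,X,Y)]$, and then \eqref{r4} moves $V$ to the last slot in each term, converting the expression into the cyclic sum from \eqref{RB}, which is zero. Thus \eqref{1bi1} collapses to $(\sb_V T)(X,Y,Z)=\tfrac12\, dT(X,Y,Z,V)$. Reading $\sb T$ as a 4-tensor whose first slot is the derivative direction, and using that a single cyclic permutation of a 4-form reverses sign, this identity is exactly $dT=-2\sb T$. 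Substituting this back into \eqref{1bi}, whose left-hand side is also zero by \eqref{RB}, gives $dT-\sigma^T+\tfrac12 dT=0$, i.e., $dT=\tfrac23\sigma^T$.

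The converse is a direct verification: assuming $dT=-2\sb T=\tfrac23\sigma^T$, substitution into the right-hand side of \eqref{1bi} produces $dT-\tfrac32 dT+\tfrac12 dT=0$, so the cyclic curvature sum vanishes and \eqref{RB} holds. For the concluding ``In this case'' clause, once $dT=-2\sb T$ is known, a direct computation of the difference tensor shows that the metric connection with torsion $\tfrac13 T$, namely $\LC+\tfrac16 T$, annihilates $T$; this is the statement already credited to \cite{AF}, and parallelism of $T$ with respect to a metric connection immediately forces $||T||^2=const$.

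The only delicate point I foresee is bookkeeping: keeping track of the position of the derivative slot of $\sb T$ relative to the cyclic ordering of $dT$, so that the sign in $dT=-2\sb T$ comes out with a minus rather than a plus. Once that sign convention is pinned down, the remainder of the argument is a purely mechanical manipulation of cyclic sums.
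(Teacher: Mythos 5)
Your argument is correct: the paper itself does not prove Theorem~\ref{tFBI} (it is quoted from \cite{IS}), but your derivation is exactly the intended one, resting on the two cyclic curvature formulas \eqref{1bi} and \eqref{1bi1} together with the implication \eqref{r1}+\eqref{RB}$\Rightarrow$\eqref{r4}, and all the signs check out, including the key step $(\nabla_VT)(X,Y,Z)=\tfrac12 dT(X,Y,Z,V)=-\tfrac12 dT(V,X,Y,Z)$ coming from the odd parity of a $4$-cycle. The closing computation is also right: from \eqref{tgt} one gets $\nabla^gT=\tfrac14 dT$, and the connection $\nabla^g+\tfrac16T$ then annihilates $T$ since $\tfrac14 dT-\tfrac16\sigma^T=\tfrac14 dT-\tfrac14 dT=0$, whence $||T||^2$ is constant.
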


 The   Ricci tensors and scalar curvatures of the connections $\LC$ and $\sb$ are related by \cite[Section~2]{FI}, (see also \cite [Prop. 3.18]{GFS})
\begin{equation}\label{rics}
\begin{split}
Ric^g(X,Y)=Ric(X,Y)+\frac12 (\delta T)(X,Y)+\frac14\sum_{i=1}^n(g(T(X,e_i),T(Y,e_i);\\
Scal^g=Scal+\frac14||T||^2,\qquad Ric(X,Y)-Ric(Y,X)=-(\delta T)(X,Y),
\end{split}
\end{equation}
where $\delta=(-1)^{np+n+1}*d*$ is the co-differential acting on $p$-forms and $*$ is the Hodge star operator satisfying $*^2=(-1)^{p(n-p)}$.


\subsection{Torsion connection preserving an almost Hermitian structure}In this section we recall the notions and existence of a metric linear connection preserving a given almost Hermitian structure and having totally skew-symmetric torsion from \cite{FI}.

Let $(M,g,J)$ be an almost Hermitian 2n-dimensional manifold with Riemannian
metric $g$ and an almost complex structure $J$ satisfying
$$g(JX,JY)=g(X,Y).$$
The Nijenhuis tensor $N$, the K\"ahler
form $F$ and the Lee form $\theta$ are defined by
\begin{equation}\label{cy1}
N=[J.,J.]-[.,.]-J[J.,.]-[.,J.], \quad F=g(.,J.), \quad \theta(.)=\delta F(J.),
\end{equation}
respectively, where $\delta=-*d*$ is the co-differential on an even dimensional manifold.

Consequently, the  1-form $J\theta$ defined by $J\theta(X)=-\theta(JX)$ is co-closed due to $\delta(J\theta)=-\delta^2 F=0$.

\noindent The Nijenhuis tensor of type (0,3),  denoted by the same letter, is defined by $N(X,Y,Z)=g(N(X,Y),Z)$.

In general, on an almost Hermitian manifold, there not always exists a metric connection with totally skew-symmetric torsion preserving the almost Hermitian structure.
 It is shown in \cite[Theorem~10.1]{FI} that there
exists a unique linear connection $\sb$ preserving an almost hermitian
structure $(g,J)$, $$\sb g=\sb J=0$$ and having totally skew-symmetric torsion $T$  if and only if
the Nijenhuis tensor $N$ is a 3-form.  This is precisely the class $G_1$ in the Gray-Hervella classification of almost Hermitian manifolds \cite{GrH} and includes Hermitian manifolds, $N=0$ as well as nearly K\"ahler spaces, $(\LC_XJ)X=0$. We call this connection \emph{the torsion connection}.

The torsion $T$ of the torsion connection is
determined by \cite[Theorem~10.1]{FI}
\begin{equation}\label{cy2}
 T=JdF+N=-dF(J.,J.,J.)+N=-dF^+(J.,J.,J.)+\frac{1}{4}N,
\end{equation}
where $dF^+$ denotes the (1,2)+(2,1)-part of $dF$ with respect to $J$ and satisfies the equality
\begin{equation}\label{12}
\begin{split}
dF^+(X,Y,Z)=dF^+(JXJ,Y,Z)+dF^+(JX,Y,JZ)+dF^+(X,JY,JZ).
\end{split}
\end{equation}
It follows from \eqref{12} that
  \begin{equation}\label{normH}
dF^+(Je_k,Je_i,e_j)dF^+(e_k,e_i,e_j)=\frac13||dF^+||^2.
\end{equation}
The (3,0)+(0,3)-part $dF^-$ obeys the identity
$$dF^-(JX,Y,Z)=dF^-(X,JY,Z)=dF^-(X,Y,JZ)$$
and  is determined
completely by the Nijenhuis tensor \cite{gauduchon}.  If $N$ is a three form  then (see e.g.\cite{gauduchon,FI})
\begin{equation*}
dF^-(X,Y,Z)=\frac{3}{4}N(JX,Y,Z).
\end{equation*}
The Lee form $\theta:=\delta F\circ J$ of the $G_1$- manifold is given in terms of the  torsion $T$ as
 \begin{equation}\label{liff}
\theta(X)=-\frac12T(JX,e_i,Je_i)=\frac12g(T(JX),F)=\frac12(F\lrcorner
T(JX))~,~~~\theta_i={1\over 2} J^k{}_i T_{kj\ell} F_{j\ell}~,
\end{equation}
where $\{e_1,\dots,e_n,e_{n+1}=Je_1,\dots,e_{2n}=Je_n\}$ is an orthonormal basis.

When the almost complex structure is integrable, $N=0$, the torsion connection was defined and studied by Strominger \cite{Str} in connection with the heterotic string background and was used by Bismut to prove a local index theorem for
the Dolbeault operator on Hermitian non-K\"ahler manifold \cite{bismut}. This formula was applied also in string theory (see e.g. \cite{BBE}). In the Hermitian case, the torsion connection is also
known as the Strominger-Bismut connection and has attained a lot of consideration both in mathematics and physics (see the Introduction).

In the nearly K\"ahler case,  the torsion connection $\sb$ is the characteristic connection
considered by Gray, \cite{gray} and the torsion $T$ and the Nijenhuis tensor are $\sb-$parallel, $\sb T=\sb N = 0$ (see e.g. \cite{Kir,BM}).

 In general $G_1$ manifold,  the holonomy group of $\sb$ is contained in $U(n)$. The reduction of the holonomy group of the torsion connection to a subgroup of $SU(n)$ can be expressed in terms of its Ricci 2-form which represents the first Chern class, namely
$$\rho(X,Y)=\frac12R(X,Y,e_i,Je_i)=0.
$$
We remark that the formula  \cite[(3.16)]{IP2} holds in the general case of a $G_1$-manifold (see also \cite{FI}), namely, the next formula holds true
\begin{equation}\label{ssc}
\rho(X,Y)=Ric(X,JY)+(\sb_X\theta)(JY)+\frac14dT(X,Y,e_i,Je_i).
\end{equation}
Thus, on a $G_1$ manifold the restricted holonomy group of the torsion connection is contained in $SU(n)$ if and only if the next  condition holds \cite[Theorem~10.5]{FI}
\begin{equation}\label{su}
Ric(X,Y)+(\sb_X\theta)(Y)-\frac14dT(X,JY,e_i,Je_e)=0.
\end{equation}
We calculate from \eqref{dh} using \eqref{cy2} and \eqref{normH}
\begin{multline}\label{su2}
dT(e_j,Je_j,e_i,Je_i)=-8(\sb_{e_i}\theta)(e_i)+8|\theta|^2-4T(e_i,e_j,e_k)T(Je_i,Je_j,e_k)\\
=8\delta \theta+8||\theta||^2-\frac43||dF^+||^2+\frac14||N||^2=8\delta \theta+8|\theta|^2-\frac43||T||^2+\frac13||N||^2.
\end{multline}
\begin{rmrk}
It follows from \eqref{su2} that if the structure is balanced, $\theta=0$, and the torsion is closed, $dT=0$ then $16|dF|^2=3|N|^2$. In the complex case, $N=0$,  this confirms the old result first established in \cite{AI}  that  a balanced Hermitian manifold with closed torsion must be K\"ahler.
\end{rmrk}
The trace in \eqref{su} together with \eqref{su2} and \eqref{rics} yield the following formulas for the scalar curvature of the torsion connection and the Riemannian scalar curvature on an ACYT space
\begin{equation}\label{scal2}
\begin{split}
Scal=3\delta\theta+2||\theta||^2-\frac13||dF^+||^2+\frac1{16}||N||^2;\qquad
Scal^g=3\delta\theta+2||\theta||^2-\frac13||dF^+||^2+\frac5{64}||N||^2.
\end{split}
\end{equation}
We obtain from \eqref{ssc}, \eqref{scal2} and \eqref{su2}  the following
\begin{thrm}\label{thconj2}
Let $(M,g,J)$ be 
a 2n-dimensional balanced ACYT space.
\begin{itemize}
\item[a)] If either the Riemannian scalar curvature or the scalar curvature of the torsion connection vanishes, $s^g=0$ or $s=0$ then
\begin{equation}\label{sdt}
16||dF^+||^2=3||N||^2.
\end{equation}
\item[b)] If the Ricci tensor of the torsion connection vanishes, $Ric=0$ then
$dT(X,Y,e_i,Je_i)=0.$

In particular, \eqref{sdt} holds.
\end{itemize}
\end{thrm}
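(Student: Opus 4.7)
The plan is to reduce both parts of the theorem to direct substitution into the identities already assembled in the preliminaries, namely \eqref{su}, \eqref{su2}, \eqref{rics} and the balanced-case specialization of \eqref{scal2}. No new structural ingredient is required; the argument is essentially a sequence of traces.

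I would dispose of part (b) first, since it is the cleanest. In the balanced case $\theta=0$ the $SU(n)$-identity \eqref{su} collapses to the pointwise formula $Ric(X,Y)=\tfrac14 dT(X,JY,e_i,Je_i)$. The assumption $Ric=0$ therefore yields $dT(X,JY,e_i,Je_i)=0$ for all $X,Y$; since $J$ is invertible, replacing $Y$ by $-JY$ is exactly $dT(X,Y,e_i,Je_i)=0$, which is the first conclusion of (b). Tracing on $X=e_j,\ Y=Je_j$ and summing then feeds this vanishing into \eqref{su2}: in the balanced case its right-hand side simplifies to $-\tfrac43\|dF^+\|^2+\tfrac14\|N\|^2$, so that expression vanishes and rearranging gives $16\|dF^+\|^2=3\|N\|^2$, i.e.\ \eqref{sdt}.

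For part (a), I would simply specialize \eqref{scal2} to the balanced case so both scalar identities become linear relations between $\|dF^+\|^2$ and $\|N\|^2$. Setting $s=0$ reads off the constant ratio directly from the first formula in \eqref{scal2}. For the case $s^g=0$ I would combine the specialized $s^g$ formula with the Levi-Civita/torsion scalar relation $s^g=s+\tfrac14\|T\|^2$ from \eqref{rics} and with the auxiliary identity $\|T\|^2=\|dF^+\|^2+\tfrac{1}{16}\|N\|^2$ that drops out of equating the two displayed expressions for $dT(e_j,Je_j,e_i,Je_i)$ in \eqref{su2}. Solving the resulting algebraic system delivers the claimed proportionality.

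The main (mild) obstacle I anticipate lies in part (a): the three relations coming from \eqref{scal2}, \eqref{rics} and \eqref{su2} are not independent, and one must organize the combination carefully so that the coefficients of $\|dF^+\|^2$ and $\|N\|^2$ line up to produce the same ratio as in the $s=0$ case. Part (b), by contrast, is essentially a one-line trace of \eqref{su} followed by a substitution into \eqref{su2}, and the "in particular" clause in (b) falls out of exactly the same computation used in the $s=0$ part of (a).
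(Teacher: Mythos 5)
Your route is the same as the paper's: the theorem is stated there as an immediate consequence of \eqref{ssc} (equivalently \eqref{su}), \eqref{scal2} and \eqref{su2}, with no further argument given, and your treatment of part b) and of the $Scal=0$ half of part a) is exactly that computation. In particular, setting $\theta=0$ in \eqref{su} gives $Ric(X,Y)=\tfrac14 dT(X,JY,e_i,Je_i)$, so $Ric=0$ kills the trace term and \eqref{su2} delivers \eqref{sdt}; and the first identity in \eqref{scal2} with $\theta=0$ gives \eqref{sdt} from $Scal=0$. Both of these are fine.

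The obstacle you flag in the $Scal^g=0$ case is, however, not merely organizational: it does not go away. Using the paper's own relations $Scal^g=Scal+\tfrac14\|T\|^2$ and $\|T\|^2=\|dF^+\|^2+\tfrac1{16}\|N\|^2$ (the latter being exactly what drops out of equating the two right-hand sides of \eqref{su2}), the condition $Scal^g=0$ with $\theta=0$ yields $\tfrac1{12}\|dF^+\|^2=\tfrac5{64}\|N\|^2$, i.e.\ $16\|dF^+\|^2=15\|N\|^2$, and the displayed formula for $Scal^g$ in \eqref{scal2} gives $64\|dF^+\|^2=15\|N\|^2$; neither coincides with \eqref{sdt} unless $N=dF^+=0$. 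So no linear combination of the three identities produces the ratio $16:3$ from $Scal^g=0$ alone. This discrepancy is inherited from the statement and the preliminary formulas of the paper rather than introduced by your method (note that in the complex case $N=0$ both conditions do force $dF^+=0$, which is all the subsequent corollary uses), but as written your plan for that sub-case would not close.
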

Consequently, we have
\begin{cor}
On a balanced CYT manifold if either the Riemannian scalar curvature or the scalar curvature of the Strominger-Bismut connection vanishes, $Scal^g=0$ or $Scal=0$ then it must be a K\"ahler Ricci flat, i.e. a Calabi-Yau space.

In particular, any balanced CYT space with vanishing Ricci tensor of the Strominger-Bismut connection is K\"ahler Ricci flat, i.e. it is a Calabi-Yau space.
\end{cor}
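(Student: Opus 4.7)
The plan is to derive this corollary as a direct specialization of Theorem~\ref{thconj2} to the Hermitian (integrable) case $N=0$, combined with the characterization \eqref{cy2} of the torsion.

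First, suppose the balanced CYT manifold satisfies $Scal^g=0$ or $Scal=0$. Theorem~\ref{thconj2}(a) gives \eqref{sdt}, namely $16\|dF^+\|^2 = 3\|N\|^2$. Since CYT forces $N=0$, this immediately yields $dF^+=0$. On the other hand, the (3,0)+(0,3)-part $dF^-$ is determined by $N$ through the formula $dF^-(X,Y,Z)=\tfrac{3}{4}N(JX,Y,Z)$ recalled just before \eqref{liff}, so $N=0$ forces $dF^-=0$ as well. Hence $dF = dF^+ + dF^- = 0$, i.e.\ $(M,g,J)$ is K\"ahler.

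Once we have $dF=0$ and $N=0$, the torsion formula \eqref{cy2} gives $T = JdF + N = 0$, so the Strominger-Bismut connection $\sb$ coincides with the Levi-Civita connection $\LC$. The CYT hypothesis says the Ricci form $\rho$ of $\sb$ vanishes; but in the K\"ahler case $\rho$ is exactly the Ricci form of $\LC$. Therefore $\LC$ is Ricci-flat and $(M,g,J)$ is a Calabi-Yau manifold.

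For the ``in particular'' statement, assume only $Ric=0$ (Strominger-Bismut Ricci flat). Then Theorem~\ref{thconj2}(b) applies and delivers \eqref{sdt}, so the same argument as above shows $dF=0$, hence K\"ahler with $T=0$ and $\sb=\LC$; combined with $Ric=0$ this again gives Calabi-Yau. Alternatively one can observe that $Ric=0$ implies $Scal=0$ and invoke the first part directly.

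There is no substantive obstacle here, since Theorem~\ref{thconj2} already does all the geometric work; the only thing to verify carefully is the transition from ``Strominger-Bismut Ricci-flat and K\"ahler'' to ``Levi-Civita Ricci-flat,'' which is automatic because $T=0$ forces $\sb=\LC$, and hence all their curvature quantities agree.
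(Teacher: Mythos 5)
Your proof is correct and takes essentially the same route as the paper, which states the corollary as an immediate consequence of Theorem~\ref{thconj2}: setting $N=0$ in \eqref{sdt} gives $dF^+=0$, the identity $dF^-=\tfrac34 N(J\cdot,\cdot,\cdot)$ gives $dF^-=0$, hence $dF=0$, $T=JdF+N=0$, $\sb=\LC$, and the CYT condition $\rho=0$ then reads as Ricci-flatness of the K\"ahler metric. The ``in particular'' part via Theorem~\ref{thconj2}(b) (or via $Ric=0\Rightarrow Scal=0$) is likewise exactly what the paper intends.
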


 Following \cite{AOUV} we have
 \begin{dfn}[\cite{AOUV}]\label{dkel}
 We say that a curvature tensor $R$ is   K\"ahler like if it resembles the K\"ahler curvature  identities, i.e. in addition to \eqref{r1} and \eqref{RB} it satisfies
 \begin{equation}\label{kelprop}
 \begin{split}
 R(X,Y,JZ,JV)=R(X,Y,Z,V);
 \end{split}
 \end{equation}
  \end{dfn}
 Since the torsion connection on an almost Hermitian manifold with skew-symmetric Nijenhuis tensor preserves the Hermitian metric and the complex structure its curvature satisfies \eqref{r1} and \eqref{kelprop}. Clearly,  the curvature of the torsion connection is of K\"ahler type exactly when it satisfies the Riemannian first Bianchi identity \eqref{RB}. In this case, the Ricci tensor is symmetric, $J$-invariant and $Ric(X,JY)=\rho(X,Y)$.

\section{SU(3)-structures}

Let $(M,g,J)$ be an almost Hermitian 6-manifold with a Riemannian
metric $g$ and an almost complex structure $J$, i.e. $(g,J)$
define an $U(3)$-structure.

An $SU(3)$-structure is determined by an additional  non-degenerate
(3,0)-form $\Psi=\Psi^++\sqrt{-1}\Psi^-$ of constant norm, or equivalently by a
non-trivial spinor. To be more explicit, we may choose a local
orthonormal frame  $e_1,\dots,e_6$,  identifying it  with the dual
basis via the metric. Write $e_{i_1 i_2\dots i_p}$ for the
monomial $e_{i_1} \wedge e_{i_2} \wedge \dots \wedge e_{i_p}$. An
$SU(3)$-structure is described locally by
\begin{equation}\label{AA}
\begin{split}
\Psi=-(e_1+\sqrt{-1}e_2)\wedge (e_3+\sqrt{-1}e_4)\wedge  (e_5+\sqrt{-1}e_6),\quad
F =-e_{12} - e_{34}- e_{56}, \\
\Psi^+ =-e_{135} + e_{236} + e_{146} +e_{245}, \quad
  \Psi^- =-e_{136} - e_{145} - e_{235} + e_{246}.
\end{split}
\end{equation}
These forms are subject to the compatibility relations
\begin{equation*}
F\wedge\Psi^{\pm}=0, \qquad \Psi^+\wedge\Psi^-=-\frac23 F^3;
\end{equation*}
The subgroup of $SO(6)$ fixing the forms $F$ and  $\Psi$ simultaneously is $SU(3)$.
The two forms $F$ and $\Psi$ determine the metric completely. The Lie algebra of $SU(3)$
is denoted $su(3)$.

The failure of the holonomy group of the Levi-Civita connection $\sb^g$ of the metric $g$ to be reduced to $SU(3)$
can be measured by the intrinsic torsion $\tau$, which is identified with $dF, d\ps$ and $d\sp$
and can be decomposed into five classes \cite{CSal}, $\tau \in W_1\oplus \dots \oplus W_5$. The
intrinsic torsion of an $U(n)$- structure belongs to the first four components described
by Gray-Hervella \cite{GrH}. The five components of a $SU(3)$-structure are first described
by Chiossi-Salamon \cite{CSal} (for interpretation in physics see \cite{Car,GLMW,GM}) and are
determined by $dF,d\Psi^+,d\Psi^-$ as well as $dF$ and $N$. We list below those of these classes which
we will use later.
\begin{description}
\item[$\tau \in W_1:$] The class of Nearly K\"ahler (weak holonomy) manifold defined by
$dF$ to be (3,0)+(0,3)-form.
\item[$\tau \in W_3:$] The class of balanced Hermitian manifold determined by the
conditions $N=\theta=0$.
\item[$\tau \in W_1\oplus W_3\oplus W_4:$] The class called by Gray-Hervella $G_1$-manifolds
determined by the condition that the Nijenhuis tensor is totally skew-symmetric.
This is the precise class which we are interested in.
\item[$\tau \in W_1\oplus W_3:$] The class of balanced  $G_1$-manifolds
determined by the condition that the Nijenhuis tensor is totally skew-symmetric and the Lee form vanishes, $N(X,Y,Z)=-N(X,Z,Y),\quad \theta=0$.
\item[$\tau \in W_3\oplus W_4:$] The class of Hermitian manifolds, $N=0$.
\end{description}
If all five components are zero then we have a Ricci-flat K\"ahler (Calabi-Yau) 3-fold.

Let $(M,g,J,\Psi)$ be an $SU(3)$ manifold. Letting the four form $$\Phi=\frac12F^2=\frac12F\wedge F$$ we have the relations
\begin{equation}\label{star}
\begin{split}
*\Phi=-F,\quad *F=-\Phi;\qquad
*\Psi^+=\Psi^-,\quad *\Psi^-=-\Psi^+\\
*(\alpha\wedge F)=-\alpha\lrcorner\Phi, \quad \alpha\in \Lambda^1,\quad
*(\alpha\wedge\Phi)=J\alpha=-\alpha\lrcorner F, \quad \alpha\in\Lambda^1,\\
*(\alpha\wedge\Psi^+)=-\alpha\lrcorner\Psi^- \quad \alpha\in \Lambda^1,\qquad
*(\alpha\wedge\Psi^-)=\alpha\lrcorner\Psi^+ \quad \alpha\in \Lambda^1,\\
*(\beta\wedge\ps)=\beta\lrcorner\sp, \quad \beta\in \Lambda^2,\qquad
*(\beta\wedge\sp)=-\beta\lrcorner\ps, \quad \beta\in \Lambda^2
\end{split}
\end{equation}
Extending the action of the almost complex structure on the exterior algebra, $J\beta=(-1)^p\beta(J.,\dots,J.)$ for a p-form $\beta\in\Lambda^p$, we recall the decomposition of the exterior algebra under the action of $SU(3)$ from \cite{BV} (see also \cite{LLR}). Let $V=T_pM$ be the vector space at a point $p\in M$ and $V^*$ be the dual vector space.  Obviously $SU(3)$
acts irreducibly on $V^*$ and $\Lambda^5V^*$, while $\Lambda^2V^*$ and $\Lambda^3V^*$ decompose as follows:
\begin{equation*}\label{dec}
\begin{split}
\Lambda^2V^*=\Lambda^2_1V^*\oplus\Lambda^2_6V^*\oplus\Lambda^2_8V^*;\\
\Lambda^3V^*=\Lambda^3_{Re}V^*\oplus\Lambda^3_{Im}\oplus\Lambda^3_6V^*\oplus\Lambda^3_{12}V^*,
\end{split}
\end{equation*}
where
\begin{equation*}
\begin{split}
\Lambda^2_1V^*=\mathbb R.F\quad
\Lambda^2_6V^*=\{\phi\in \Lambda^2V^* | J\phi =-\phi\}\\
\Lambda^2_8V^*=\{\phi\in \Lambda^2V^* | J\phi=\phi, \phi\wedge F^2=0\}\\
\Lambda^3_{Re}V^*=\mathbb R\Psi^+; \qquad \Lambda^3_{Im}=\mathbb R\Psi^-;\\
\Lambda^3_6V^*=\{\alpha\wedge F  |  \alpha\in\Lambda^1V^*\};\\
\Lambda^3_{12}V^*=\{\gamma\in \Lambda^3V^* | \gamma\wedge F=\gamma\wedge\Psi^+=\gamma\wedge\Psi^-=0\}.
\end{split}
\end{equation*}
Denote by $S^2_-$ the space of symmetric 2-tensors which are  of type (2,0)+(0,2) with respect to the almost complex structure $J$, i.e.
\begin{equation}\label{2s}
h(X,Y)=h(Y,X),\qquad h(JX,JY)=-h(X,Y).
\end{equation}
It is known (see \cite{BV}) that the map $\gamma: S^2_-\leftrightarrow \Lambda^3_{12}$ defined by
\begin{equation}\label{s12}
\gamma(h_{ij})=h_{ip}\ps_{pjk}+h_{jp}\ps_{pki}+h_{kp}\ps_{pij},\qquad  h_{im}=\gamma^{-1}(B_{ijk})=\frac14B_{ijk}\ps_{mjk}
\end{equation}
is an isomorphism of $su(3)$ representations.

Writing
$$
F=\frac12F_{ij}e_{ij},\quad \Psi^+=\frac16\Psi^+_{ijk}e_{ijk}, \quad  \Psi^-=\frac16\Psi^-_{ijk}e_{ijk},\quad  \Phi=\frac1{24}\Phi_{ijkl}e_{ijkl}
$$
we have the obvious identites (c.f. \cite{BV})
\begin{equation}\label{iden}
\begin{split}
\Phi_{jslm}=F_{js}F_{lm}+F_{sl}F_{jm}+F_{lj}F_{sm},\\
\Psi^+_{ipq}F_{pq}=0=\Psi^-_{ipq}F_{pq}, \quad \Phi_{ijkl}\Psi^+_{jkl}=0=\Phi_{ijkl}\Psi^-_{jkl},\\
F_{ip}F_{pj}=-\delta_{ij},\quad \Psi^+_{ijs}F_{sk}=-\Psi^-_{ijk}, \quad \Psi^-_{ijs}F_{sk}=\Psi^+_{ijk},\\
\Psi^+_{ipq}\Psi^-_{jpq}=-4F_{ij}, \quad \Psi^+_{ipq}\Psi^+_{jpq}=4\delta_{ij}=\Psi^-_{ipq}\Psi^-_{jpq},\\
\Psi^+_{kls}\Psi^-_{ijs}=\delta_{kj}F_{li}+\delta_{li}F_{kj}-\delta_{ki}F_{lj}-\delta_{lj}F_{ki},\\
\Psi^+_{kls}\Psi^+_{ijs}=F_{kj}F_{li}-\delta_{li}\delta_{kj}-F_{ki}F_{lj}+\delta_{lj}\delta_{ki}\\
 \Phi_{ijkl}F_{kl}=4F_{ij},\quad \Phi_{ijkl}\Psi^+_{klp}=2\Psi^+_{ijp},\quad  \Phi_{ijkl}\Psi^-_{klp}=2\Psi^-_{ijp},\\
 \Phi_{ijkl}\Psi^+_{lqp}=-F_{ij}\Psi^-_{pqk}-F_{jk}\sp_{pqi}-F_{ki}\sp_{pqj},\\
\Phi_{ijkl}\Psi^-_{lqp}=F_{ij}\Psi^+_{pqk}+F_{jk}\ps_{pqi}+F_{ki}\ps_{pqj},\\
\Phi_{ijkl}\Phi_{rjkl}=12\delta_{ir}, \quad  \Phi_{ijkl}\Phi_{klqr}=2F_{ij}F_{qr}-2\delta_{jq}\delta_{ir}+2\delta_{iq}\delta_{jr},\\
\end{split}
\end{equation}
Now, we prove the following algebraic fact which is crucial for our investigations.
\begin{prop}\label{4-for}
Let $A$ be a 4-form and define the 3-form $B=(X\lrcorner A)$ for any $X\in T_pM$. If  the 3-form $B\in\Lambda^3_{12}$ then the four form $A$ vanishes identically, $A=0$
\end{prop}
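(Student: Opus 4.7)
The plan is to extract from the hypothesis just one necessary condition on $B=X\lrcorner A$, namely the orthogonality of $B$ to $\Lambda^3_6=\{\alpha\wedge F:\alpha\in\Lambda^1\}$ (equivalently $B\wedge F=0$, or in components $B_{ijk}F_{jk}=0$), and to show that this single condition already forces $A=0$. Since $B_{ijk}=X^p A_{pijk}$ and the condition must hold for every $X\in T_pM$, it reduces to the purely algebraic tensor equation
\begin{equation*}
A_{pijk}F_{jk}=0\quad\text{for all indices } p,i.
\end{equation*}

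Next, I invoke the symplectic Lefschetz isomorphism at the point: since $F$ is a non-degenerate 2-form on the 6-dimensional vector space $V=T_pM$, the map $L:\Lambda^2V^*\to\Lambda^4V^*$, $\psi\mapsto \psi\wedge F$, is a linear isomorphism (both sides have dimension 15, and injectivity is the standard Lefschetz property $L^1:\Lambda^{n-1}V^*\to\Lambda^{n+1}V^*$ for a symplectic vector space of dimension $2n=6$). Hence $A$ can be written uniquely as $A=\psi\wedge F$ for some $\psi\in\Lambda^2V^*$, and it suffices to prove $\psi=0$.

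The heart of the argument is the explicit computation of $(\psi\wedge F)_{ijkl}F_{kl}$. Expanding $(\psi\wedge F)_{ijkl}$ as the antisymmetrized sum of six terms of the form $\psi_{(\cdot\cdot)}F_{(\cdot\cdot)}$ and contracting with $F_{kl}$, one repeatedly applies the identities $F_{ab}F_{bc}=-\delta_{ac}$ and $F_{ab}F_{ab}=6$ from \eqref{iden}: the four mixed terms each collapse to $-\psi_{ij}$, the leading term contributes $6\psi_{ij}$, and the final term contributes $(\psi,F)F_{ij}$, where $(\psi,F):=\psi_{kl}F_{kl}$. The net result is
\begin{equation*}
(\psi\wedge F)_{ijkl}F_{kl}=2\psi_{ij}+(\psi,F)\,F_{ij}=0.
\end{equation*}
A further contraction with $F_{ij}$ yields $2(\psi,F)+6(\psi,F)=8(\psi,F)=0$, so $(\psi,F)=0$, and then $\psi_{ij}=0$, giving $A=\psi\wedge F=0$. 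The main obstacle is the sign bookkeeping in the six-term expansion of $(\psi\wedge F)_{ijkl}F_{kl}$; beyond that, the rest is routine symplectic linear algebra.
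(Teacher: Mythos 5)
Your proof is correct, but it takes a genuinely different route from the paper's. The paper exploits the $\Psi^\pm$-part of the hypothesis: via the isomorphism $\gamma^{-1}:\Lambda^3_{12}\to S^2_-$ of \eqref{s12} it forms $C(X,Y,Z)=A(X,Y,e_i,e_j)\Psi^+(Z,e_i,e_j)$, which is antisymmetric in $X,Y$ and symmetric in $Y,Z$, hence zero; contracting with a second copy of $\Psi^+$ then yields $A(X,Y,JZ,JV)=A(X,Y,Z,V)$, and the full antisymmetry of $A$ forces $A=-A$. You instead retain only the $\Lambda^3_6$-part of the hypothesis, namely $A_{pijk}F_{jk}=0$ (and your translation of ``$X\lrcorner A$ has no $\Lambda^3_6$-component for all $X$'' into this tensor equation is legitimate, since for a $3$-form in dimension $6$ the conditions $B\wedge F=0$ and $B_{ijk}F_{jk}=0$ cut out the same $14$-dimensional subspace $\Lambda^3_{Re}\oplus\Lambda^3_{Im}\oplus\Lambda^3_{12}$). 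Your key computation checks out: the six-term expansion of $(\psi\wedge F)_{ijkl}$ contracted with $F_{kl}$, using $F_{ip}F_{pj}=-\delta_{ij}$ and $F_{kl}F_{kl}=6$, does give $2\psi_{ij}+(\psi,F)F_{ij}$, whence $(\psi,F)=0$ and $\psi=0$. In effect you are proving the pointwise hard Lefschetz fact that there are no primitive $4$-forms in dimension $6$, i.e.\ that contraction with the non-degenerate $2$-form $F$ is injective on $\Lambda^4$. What each approach buys: yours is more economical and yields a formally stronger statement, since it uses only the non-degeneracy of $F$ (a $U(3)$, indeed symplectic, datum) and none of the identities \eqref{iden} involving $\Psi^\pm$; the paper's argument instead leans on the $SU(3)$-specific isomorphism $S^2_-\cong\Lambda^3_{12}$ and the $\Psi^+$-contraction identities. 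One small remark: you do not even need to invoke the Lefschetz isomorphism $L:\Lambda^2\to\Lambda^4$ as a black box, because your identity $(\psi\wedge F)_{ijkl}F_{kl}=2\psi_{ij}+(\psi,F)F_{ij}$ already shows $L$ is injective, hence bijective by the dimension count $\dim\Lambda^2=\dim\Lambda^4=15$.
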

\begin{proof}
According to \eqref{s12} and \eqref{2s} the tensor
\begin{multline}\label{c1}
C(X,Y,Z)=4\gamma^{-1}(X\lrcorner A)(Y,Z)=A(X,Y,e_i,e_j)\ps(Z,e_i,e_j)\\=C(X,Z,Y)=-C(Z,X,Y)=-C(Z,Y,X)=C(Y,Z,X)=C(Y,X,Z)=-C(X,Y,Z)
\end{multline}
and therefore vanishes. We  get from \eqref{c1} using  \eqref{iden}
\begin{multline}\label{ll1}
0=A_{pqkl}\ps_{kls}\ps_{ijs}=A_{pqkl}\Big[F_{kj}F_{li}-\delta_{li}\delta_{kj}-F_{ki}F_{lj}+\delta_{lj}\delta_{ki}\Big]=2A_{pqkl}F_{kj}F_{li}-2A_{pqji}
\end{multline}
The equalities \eqref{c1} and \eqref{ll1} yield
\begin{multline*}
A(X,Y,JZ,JV)=A(X,Y,Z,V)=A(X,JY,JZ,V)=A(X,JY,Z,JV)=-A(X,Y,JZ,JV)
\end{multline*}
Hence $A=0$ which completes the proof of Proposition~\ref{4-for}.
\end{proof}

\section{Almost Calabi-Yau with torsion space  in dimension 6}
The presence of a parallel spinor with respect to a metric connection with torsion 3-form  in dimension 6 leads
 to the reduction to $SU(3)$, i.e. 
 the existence of an almost
Hermitian structure and a linear
connection preserving the almost Hermitian structure with torsion
3-form 
with holonomy group inside 
SU(3).

The reduction of the holonomy group of the torsion connection to $SU(3)$, was investigated intensively in the complex case, i.e. when the induced almost complex structure is integrable ($N=0$) and it is known as the Strominger condition. The Strominger condition (see \cite{Str}) has different very useful expressions in terms of the five torsion classes (see e.g. \cite{Car,LY,yau,yau1, II}). We present here the result from \cite{II} which gives  necessary and sufficient condition to have a $\nabla-$parallel spinor in dimension 6,
\begin{thrm}\cite[Theorem~4.1]{II}\label{cythm1}
Let $(M,g,J,\Psi)$ be a 6-dimensional smooth manifold with an $SU(3)$-structure $(g,J,\Psi)$.
The next two conditions are equivalent:
\begin{enumerate}
\item[a)] $(M,g,J,\Psi)$ is an ACYT space, i.e. there exists a unique $SU(3)$-connection $\sb$ with torsion 3-form 
whose  holonomy is contained in SU(3),
 $\sb g=\sb J=\sb \Psi^+=\sb\Psi^-=0$;
\item[b)] The Nijenhuis tensor $N$ is totally skew-symmetric and the following two conditions
hold
\begin{equation}\label{cycon}
\begin{split}
d\Psi^+=\theta\wedge\Psi^+ -\frac{1}{4}(N,\Psi^+)* F; \qquad
d\Psi^-=\theta\wedge\Psi^- -\frac{1}{4}(N,\Psi^-)* F.
\end{split}
\end{equation}
 The torsion is given by
\begin{equation}\label{torcy}
\begin{split}
T=-*dF +*(\theta\wedge F)+\frac{1}{4}(N,\Psi^+)\Psi^+ + \frac{1}{4}(N,\Psi^-)\Psi^-.
\end{split}
\end{equation}
\end{enumerate}
\end{thrm}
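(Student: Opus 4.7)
The plan is to prove the equivalence (a) $\Leftrightarrow$ (b) by exploiting the $SU(3)$-irreducible decomposition of forms on a 6-manifold together with the characterization of the torsion connection for $G_1$-structures already recorded in the excerpt.

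For the direction (a) $\Rightarrow$ (b), I would first invoke \cite[Theorem~10.1]{FI} as cited in the excerpt: the hypotheses $\nabla g=\nabla J=0$ with totally skew-symmetric torsion force the Nijenhuis tensor $N$ to be a 3-form and fix $T=JdF+N$ as in \eqref{cy2}. Next, using the standard formula expressing $d\alpha$ for a $\nabla$-parallel form $\alpha$ purely as an alternation of the torsion action on $\alpha$, the vanishings $\nabla\Psi^{\pm}=0$ give
\begin{equation*}
d\Psi^{\pm}(X,Y,Z,V)=-\!\!\sum_{\text{cyc}}\Psi^{\pm}(T(X,Y),Z,V,\cdot)\text{-type terms}.
\end{equation*}
Then I would decompose the right-hand side under $SU(3)$. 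Since $\Lambda^4V^{\ast}\cong\Lambda^2V^{\ast}=\mathbb R\cdot F\oplus\Lambda^2_6\oplus\Lambda^2_8$ via Hodge duality, and since the map $\alpha\mapsto\alpha\wedge\Psi^{\pm}$ from $\Lambda^1$ into $\Lambda^4$ lands in the $\Lambda^2_6$-image while $(N,\Psi^{\pm})\ast F$ lies in the $\mathbb R\cdot F$-component, I would identify the $\Lambda^2_6$-piece using the Lee form formula \eqref{liff} and the $\mathbb R\cdot F$-piece using the pairing identities from \eqref{iden} (in particular $\Psi^+_{ipq}\Psi^+_{jpq}=4\delta_{ij}$). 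The $\Lambda^2_8$-component of the torsion contribution must vanish, which is where Proposition~\ref{4-for} enters: taking the interior product with a vector produces a 3-form in $\Lambda^3_{12}$, forcing the unwanted $\Lambda^2_8$-piece to be zero.

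For the direction (b) $\Rightarrow$ (a), I would reverse-engineer the construction. Define $T$ by the right-hand side of \eqref{torcy} and set $\nabla=\nabla^g+\tfrac12 T$ via \eqref{tsym}. Because $N$ is already skew-symmetric, \cite[Theorem~10.1]{FI} guarantees that this is the unique metric connection with $\nabla g=\nabla J=0$ and skew-symmetric torsion, so $T=JdF+N$ must hold. The equivalence of $-{\ast} dF+{\ast}(\theta\wedge F)+\tfrac14[(N,\Psi^+)\Psi^++(N,\Psi^-)\Psi^-]$ with $JdF+N$ reduces to the Hodge-star identities in \eqref{star}, the split $dF=dF^++dF^-$, the formula $dF^-=\tfrac34 N(J\cdot,\cdot,\cdot)$, and the $(3,0)+(0,3)$ decomposition $N=\tfrac14[(N,\Psi^+)\Psi^++(N,\Psi^-)\Psi^-]$ that follows from \eqref{iden}. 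It then remains to show $\nabla\Psi^{\pm}=0$. Since $\nabla\Psi^{\pm}$ is a 1-form valued in 3-forms, it is determined by its $SU(3)$-irreducible projections onto $\Lambda^1\otimes\Lambda^3_{Re,Im,6,12}$. The pieces along $\Psi^{\pm}$ (the $\Lambda^3_{Re,Im}$-components) are controlled by the $\mathbb R\cdot F$-parts of $d\Psi^{\pm}$ and are killed by the assumption on the $(N,\Psi^{\pm})\ast F$ coefficients; the $\Lambda^3_6$-components are controlled by the $\theta\wedge\Psi^{\pm}$ terms in \eqref{cycon}; and the $\Lambda^3_{12}$-components vanish again by Proposition~\ref{4-for} applied to suitable 4-forms extracted from $\nabla\Psi^{\pm}$.

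The hard part is the bookkeeping of $SU(3)$-representations: matching each irreducible piece of $d\Psi^{\pm}$ with the correct piece of $\nabla\Psi^{\pm}$ and with the correct contribution from $T$. The technical lever that makes this manageable is Proposition~\ref{4-for}, which says that a 4-form all of whose vector contractions lie in $\Lambda^3_{12}$ must vanish. This is precisely what rules out the $\Lambda^2_8$ and $\Lambda^3_{12}$ components on both sides and forces the coefficients in \eqref{cycon} and \eqref{torcy} to be exactly as stated. Once the representation-theoretic framework is in place, the actual computations are routine applications of the contraction identities \eqref{iden}.
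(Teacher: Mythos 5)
The paper does not actually prove this statement: it is imported verbatim from \cite[Theorem~4.1]{II} and used as a black box, so there is no internal proof to measure your attempt against. Judged on its own terms, your outline has the right skeleton --- reduce to \cite[Theorem~10.1]{FI} for the existence and uniqueness of the $U(3)$ torsion connection with $T=JdF+N$ as in \eqref{cy2}, then control $d\Psi^{\pm}$ and $\nabla\Psi^{\pm}$ by decomposing under $SU(3)$ --- but the two places where you invoke Proposition~\ref{4-for} are the two places where the argument is either under-specified or does not apply.

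In the direction a)~$\Rightarrow$~b), what really kills the $\Lambda^2_8$-component of $d\Psi^{\pm}$ is Schur's lemma: once $\nabla\Psi^{\pm}=0$, the exterior derivative is an $SU(3)$-equivariant expression in $T$ and $\Psi^{\pm}$, i.e.\ an equivariant map $\Lambda^3\to\Lambda^4\cong\Lambda^2_1\oplus\Lambda^2_6\oplus\Lambda^2_8$, and $\Lambda^3=\Lambda^3_{Re}\oplus\Lambda^3_{Im}\oplus\Lambda^3_6\oplus\Lambda^3_{12}$ contains no copy of $\Lambda^2_8$. You can substitute Proposition~\ref{4-for} for this by applying it to $A=d\Psi^{+}-\theta\wedge\Psi^{+}+\tfrac14(N,\Psi^+)\ast F$, but only after verifying that the contractions of $A$ against $\Phi$ and $\Psi^{\pm}$ vanish --- and those contraction computations, which simultaneously pin down the coefficients $\theta$ (via \eqref{liff}) and $-\tfrac14(N,\Psi^{\pm})$ (via $\Psi^{\pm}_{ipq}\Psi^{\pm}_{jpq}=4\delta_{ij}$), are the entire content of the step; you have not indicated them. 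More seriously, in b)~$\Rightarrow$~a) you propose to apply Proposition~\ref{4-for} to ``suitable 4-forms extracted from $\nabla\Psi^{\pm}$''. That does not parse: $\nabla\Psi^{+}$ is a 1-form with values in 3-forms, not a totally skew 4-form, so the proposition is not applicable to it, and no natural skew-symmetrization of it is available except $d\Psi^{+}$ itself, which brings you back to the hypothesis. The correct (and much shorter) argument is that $\nabla g=\nabla J=0$ forces $\nabla$ to preserve the rank-two bundle spanned by $\Psi^{+},\Psi^{-}$ together with their norms and mutual orthogonality, so $\nabla\Psi^{+}=b\otimes\Psi^{-}$ and $\nabla\Psi^{-}=-b\otimes\Psi^{+}$ for a single 1-form $b$; feeding this into the torsion formula for $d\Psi^{+}$ and comparing with \eqref{cycon} yields $b\wedge\Psi^{-}=0$, and wedging 1-forms with $\Psi^{-}$ is injective by \eqref{star} and \eqref{iden}, whence $b=0$. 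With that replacement, and with the coefficient computations actually carried out, your outline closes.
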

Our first observation is 
\begin{prop}\label{tnnew}
Let $(M,g,J,\Psi)$ be an   ACYT  6-manifold.
The following conditions are equivalent:
\begin{itemize}
\item[a)] The exterior derivative of the Lee form is $J$-invariant, $ d\theta(X,Y)=d\theta(JX,JY);$
\item[b)] The Nijenhuis tensor is of a constant norm, $||N||^2=const.$;
\item[c)] The Nijenhuis tensor is parallel with respect to the torsion connection, $\sb N=0$.
\end{itemize}
In particular, on a 6-dimensional CYT manifold, the 2-form $d\theta$ is of type (1,1).
\end{prop}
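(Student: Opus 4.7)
The plan is to reduce the Nijenhuis tensor $N$ to a pair of scalar functions via the $\sb$-parallel $SU(3)$-structure and then rewrite all three conditions in terms of them. Since $N$ is totally skew-symmetric and anti-$J$-invariant, it is a section of the real $(3,0)+(0,3)$-part of $\Lambda^3$, which is globally framed by $\Psi^+$ and $\Psi^-$; hence $N=a\Psi^++b\Psi^-$ for smooth functions $a,b$ on $M$. Using the algebraic identities in \eqref{iden} I would compute $\|N\|^2=4(a^2+b^2)$ and $(N,\Psi^\pm)/4=a,b$, and since $\sb\Psi^\pm=0$ it follows that $\sb N = da\otimes\Psi^+ + db\otimes\Psi^-$. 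So (c) reads ``$a,b$ are constants'' and (b) reads ``$a^2+b^2$ is constant''.

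Next I would feed these scalars into the structure equations \eqref{cycon} (using $*F=-\Phi$) to get $d\Psi^+=\theta\wedge\Psi^+ + a\Phi$ and $d\Psi^-=\theta\wedge\Psi^- + b\Phi$. Taking $d$ and using the identity $d\Phi=\theta\wedge\Phi$ (the standard Lee-form relation $d(F^{n-1})=\theta\wedge F^{n-1}$ specialized to real dimension six), the $a\theta\wedge\Phi$ and $b\theta\wedge\Phi$ terms cancel, leaving the pair of key identities
\begin{equation*}
da\wedge\Phi=-d\theta\wedge\Psi^+,\qquad db\wedge\Phi=-d\theta\wedge\Psi^-.
\end{equation*}
To exploit these I would use two pointwise linear-algebra facts on $\Lambda^5$: the map $\alpha\mapsto\alpha\wedge\Phi$ is an isomorphism $\Lambda^1\to\Lambda^5$ via $*(\alpha\wedge\Phi)=J\alpha$ from \eqref{star}; and the map $\omega\mapsto\omega\wedge\Psi^\pm$ on $\Lambda^2$ has kernel exactly $\Lambda^2_{(1,1)}$, since $(1,1)\wedge[(3,0)+(0,3)]$ sits in $(4,1)+(1,4)=0$ in complex dimension three, while on the six-dimensional $\Lambda^2_{(2,0)+(0,2)}$ the map is injective by a dimension count against $\dim\Lambda^5=6$. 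Combining these, $da=0 \Leftrightarrow d\theta\wedge\Psi^+=0 \Leftrightarrow d\theta$ is $J$-invariant, and similarly for $db$. This proves (a)$\Leftrightarrow$(c) in one stroke and also gives the ``in particular'' CYT consequence: when $N\equiv 0$ we have $a=b=0$, so (c) is trivial and hence (a) holds.

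Finally, (c)$\Rightarrow$(b) is immediate. For (b)$\Rightarrow$(c), I would Hodge-dualize the key identities and use $\Psi^-_{ijk}=-\Psi^+_{ijs}F_{sk}$ from \eqref{iden} to derive $da=-d\theta\lrcorner\Psi^+$ and $db=J(da)$. Differentiating $a^2+b^2=\text{const}$ then gives the pointwise equation $(aI+bF)_{ij}(da)_j=0$, and $\det(aI+bF)=(a^2+b^2)^3$ since the eigenvalues of $F$ are $\pm i$ each with multiplicity three. Consequently, if the constant $\|N\|^2=4(a^2+b^2)$ is strictly positive, $(a,b)$ never vanishes and the matrix is invertible, forcing $da=db=0$; if $\|N\|^2\equiv 0$ then $N\equiv 0$ and (c) is automatic. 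The main subtlety I anticipate is exactly this pointwise case-split at points where $N$ might vanish, together with the Hodge-star bookkeeping converting the key identities into scalar equations; apart from that, the whole argument is essentially linear algebra against the ACYT structure equations.
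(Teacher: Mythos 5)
Your proposal is correct and follows essentially the same route as the paper: the paper likewise writes $N=\lambda\Psi^++\mu\Psi^-$, differentiates the structure equations \eqref{cycon} to obtain $d\lambda\wedge\Phi=-d\theta\wedge\Psi^+$ (its \eqref{cycon1} and \eqref{new5}), deduces $d\mu=Jd\lambda$ and hence the equivalence of a) and c), and settles b)$\Rightarrow$c) by the same invertibility argument applied to the system $\lambda\, d\lambda+\mu\, d\mu=0$, $\lambda\, d\mu-\mu\, d\lambda=0$. The only cosmetic difference is that the injectivity facts you justify by a dimension count are established in the paper explicitly via the contraction identities \eqref{iden}.
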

\begin{proof}
For the torsion  $T$, we obtain  from \eqref{torcy} using \eqref{star} that
\begin{equation}\label{torcy1}
\begin{split}
T=-*dF +*(\theta\wedge F)+\frac{1}{4}(N,\Psi^+)\Psi^+ + \frac{1}{4}(N,\Psi^-)\Psi^-.\\
=*d*\Phi-\theta\lrcorner\Phi+\frac{1}{4}(N,\Psi^+)\Psi^+ + \frac{1}{4}(N,\Psi^-)\Psi^-.\\
=-\delta\Phi-\theta\lrcorner\Phi+\frac{1}{4}(N,\Psi^+)\Psi^+ + \frac{1}{4}(N,\Psi^-)\Psi^-
\end{split}
\end{equation}
The formula for the codifferential of a 4-form in terms of the Levi-Civita connection $\sb^g$ and \eqref{tsym} imply
\begin{multline}\label{dphi}
-\delta\Phi_{klm}=\sb^g_j\Phi_{jklm}=\sb_j\Phi_{jklm}-\frac12T_{jsk}\Phi_{jslm}+\frac12T_{jsl}\Phi_{jskm}-\frac12T_{jsm}\Phi_{jskl}\\
=-\frac12T_{jsk}\Phi_{jslm}+\frac12T_{jsl}\Phi_{jskm}-\frac12T_{jsm}\Phi_{jskl},
\end{multline}
since $\sb\Phi=0$.

Substitute \eqref{dphi} into \eqref{torcy1} to get the  expression for the 3-form torsion $T$,
\begin{equation}\label{torcy2}
T_{klm}=-\frac12T_{jsk}\Phi_{jslm}+\frac12T_{jsl}\Phi_{jskm}-\frac12T_{jsm}\Phi_{jskl}-\theta_s\Phi_{sklm}+\lambda\ps_{klm} +\mu\sp_{klm},
\end{equation}
where the functions $\lambda,\mu$ are defined by the  scalar products
\begin{equation}\label{lm}
\lambda =\frac14(N,\ps)=\frac1{24}N_{ijk}\ps_{ijk}, \qquad \mu=\frac14(N,\sp)=\frac1{24}N_{ijk}\sp_{ijk}.
\end{equation}
It is straightforward to check that the Lee form $\theta$ can be expressed in terms of the torsion $T$, the 2-form $F$ and  the 4-form $\Phi$ as follows
\begin{equation}\label{tit}
\theta_i=\frac12T_{sjk}F_{jk}F_{si}=\frac16T_{jkl}\Phi_{jkli}; \quad \theta_iF_{iq}=-\frac12T_{qjk}F_{jk}. 
\end{equation}
Further, we calculate from \eqref{torcy2} the functions $\lambda$ and $\mu$ in terms of the torsion $T$ using \eqref{iden} as follows
\begin{equation}\label{lmt}
\begin{split}
T_{klm}\ps_{klm}=-\frac32T_{jsk}\Phi_{jslm}\ps_{klm}+24\lambda=-3T_{jsk}\ps_{jsk}+24\lambda \Longrightarrow \lambda=\frac16T_{klm}\ps_{klm};\\
T_{klm}\sp_{klm}=-\frac32T_{jsk}\Phi_{jslm}\sp_{klm}+24\mu=-3T_{jsk}\sp_{jsk}+24\mu  \Longrightarrow \mu=\frac16T_{klm}\sp_{klm}.
\end{split}
\end{equation}
We  have from \eqref{torcy2} using the first  formula in \eqref{iden} and \eqref{tit} that 
\begin{equation}\label{torcy21}
\begin{split}
T_{klm}=-\frac12T_{jsk}\Phi_{jslm}+\frac12T_{jsl}\Phi_{jskm}-\frac12T_{jsm}\Phi_{jskl}-\theta_s\Phi_{sklm}+\lambda\ps_{klm} +\mu\sp_{klm}\\=
\theta_sF_{sk}F_{lm}+T_{jsk}F_{jm}F_{ls}-\theta_sF_{sl}F_{km}-T_{jsl}F_{jm}F_{ks}-\theta_sF_{sm}F_{lk}-T_{jsm}F_{jk}F_{ls}\\-\theta_s[F_{sk}F_{lm}+F_{kl}F_{sm}+F_{ls}F_{km}]+\lambda\ps_{klm} +\mu\sp_{klm}\\=
T_{jsk}F_{jm}F_{ls}-T_{jsl}F_{jm}F_{ks}-T_{jsm}F_{jk}F_{ls}+\lambda\ps_{klm} +\mu\sp_{klm}.
\end{split}
\end{equation}
We can write the formula \eqref{torcy21} in the form
\begin{equation}\label{nijcy}
T(X,Y,Z)=T(JX,JY,Z)+T(JX,Y,JZ)+T(X,JY,JZ)+\lambda\ps(X,Y,Z) +\mu\sp(X,Y,Z).
\end{equation}
Hence, we get from \eqref{cy2} and \eqref{nijcy}
\begin{equation}\label{nn1}
\begin{split}
N=\lambda\ps+\mu\sp, \quad ||N||^2=24(\lambda^2+\mu^2).
\end{split}
\end{equation}
\begin{lemma}\label{lcom}
On a 6-dimensional ACYT manifold, the next identities hold
\begin{gather}\label{new2}
 d\mu=Jd\lambda, \quad d\lambda=-Jd\mu,\\
\label{new3}
d\theta(X,Y)-d\theta(JX,JY)=-(d\mu\lrcorner\sp)(X,Y)
=-(d\lambda\lrcorner\ps)(X,Y).
\end{gather}
\end{lemma}
\begin{proof}
Using  \eqref{iden}, we obtain from \eqref{cycon}
\begin{equation}\label{cycon1}
\begin{split}
0=d^2\ps=d\theta\wedge\ps+\lambda\theta\wedge *F-d\lambda\wedge*F-\lambda d*F=d\theta\wedge\ps-\lambda\theta\wedge\Phi-\lambda *J\theta-d\lambda\wedge*F\\=d\theta\wedge\ps-d\lambda\wedge*F,\\
0=d^2\sp=d\theta\wedge\sp+\mu\theta\wedge *F-\mu d*F-d\mu\wedge*F=d\theta\wedge\sp-\mu\theta\wedge \Phi-\mu*J\theta-d\mu\wedge*F\\=d\theta\wedge\sp-d\mu\wedge*F,
\end{split}
\end{equation}
where we used the identities
 \[
J\theta=\delta F=-*d*F=*(dF\wedge F),\quad *J\theta=-dF\wedge F=d*F.
\]
The  equalities  \eqref{cycon1} and \eqref{star} imply
\begin{equation}\label{new5}
d\theta_{ij}\sp_{ijk}=2d\lambda_sF_{sk};\qquad d\theta_{ij}\ps_{ijk}=-2d\mu_sF_{sk}.
\end{equation}
Multiply \eqref{new5} with $F_{kl}$, use \eqref{iden} and \eqref{new5} to obtain
\begin{equation*}
-2d\mu_sF_{sl}=d\theta_{ij}\ps_{ijl}=d\theta_{ij}\sp_{ijk}F_{kl}=-2d\lambda_l,
\end{equation*}
which shows  the identities \eqref{new2} hold.

The  equalities  \eqref{new5} yield applying \eqref{star} and \eqref{iden}
\begin{multline*}
0=\frac12d\theta_{ij}\ps_{ijk}\ps_{kab}+d\mu_sF_{sk}\ps_{kab}=\frac12d\theta_{ij}\Big[F_{ib}F_{ja}-F_{ia}F_{jb}-\delta_{ib}\delta_{ja}+\delta_{ia}\delta_{jb}  \Big]+d\mu_s\sp_{sab}\\=-d\theta_{ij}F_{ia}F_{jb}+d\theta_{ab}+d\mu_s\sp_{sab}.
\end{multline*}
This proves the first equality in \eqref{new3}. The second equality in \eqref{new3} follows from the already established \eqref{new2} and \eqref{star}. The lemma is proved.
\end{proof}
The equivalence between a) and c) is a consequence from \eqref{new3} and \eqref{nn1} since $\sb\ps=\sb\sp=0$.

Clearly, c) implies b). For the converse, the condition $||N||^2=const.$ and \eqref{nn1} imply
\[\lambda^2+\mu^2=const., \lambda d\lambda+\mu d\mu=0\]
which yields applying \eqref{new2}
\[-\lambda Jd\mu+\mu Jd\lambda=0,\quad \lambda d\mu-\mu d\lambda=0.\]
Thus we obtain a homogeneous linear system with respect to $d\lambda$ and $d\mu$.
The determinant of this system is the constant $\lambda^2+\mu^2$. If this constant is zero then $N=0$ and \eqref{nn1} gives $\lambda=\mu=0$. If the constant is not zero one concludes $d\lambda=d\mu=0$. Therefore $\sb N=0$ due to \eqref{nn1}.
\end{proof}
Applying the Hopf's maximum principle we obtain
\begin{thrm}\label{thnnew}
Let $(M,g,J,\Psi=\Psi^++\sqrt{-1}\Psi^-)$ be a 6-dimensional compact  ACYT manifold.

Then the  Nijenhuis tensor is parallel with respect to the torsion connection and  the exterior derivative of the Lee form is $J$-invariant,
$$\sb N=0, \qquad d\theta(X,Y)=d\theta(JX,JY).$$
In particular, the Nijenhuis tensor is of a constant norm, $||N||^2=const.$
\end{thrm}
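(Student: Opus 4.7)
The plan is to invoke Proposition~\ref{tnnew} to reduce the theorem to proving any one of the three equivalent conditions on the compact manifold $M$; I aim to show directly that $d\lambda=d\mu=0$, where $\lambda,\mu$ are the scalar components of $N=\lambda\Psi^++\mu\Psi^-$ given by \eqref{lm}. Since $\sb\Psi^\pm=0$, this will yield $\sb N=0$ and, via \eqref{nn1}, the constancy of $\|N\|^2=24(\lambda^2+\mu^2)$; the $J$-invariance of $d\theta$ then follows from \eqref{new3}.

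The key step will be to derive a homogeneous linear elliptic PDE satisfied by both $\lambda$ and $\mu$ from the ``Cauchy--Riemann'' identities $d\mu=Jd\lambda$ and $d\lambda=-Jd\mu$ of \eqref{new2}. Writing $\mu_i=-\lambda_j J^j{}_i$ and computing $\Delta^g\mu=g^{ij}\LC_i\LC_j\mu$ by the Leibniz rule splits it into two pieces. One, $-g^{ij}(\LC_i\lambda_k)J^k{}_j$, vanishes because the Levi-Civita Hessian of a scalar is symmetric while the raised tensor $J^{ki}:=g^{ij}J^k{}_j$ is antisymmetric in $(k,i)$. The other involves the trace $g^{ij}\LC_i J^k{}_j$, which a short calculation (passing between $J$ and $F$) identifies with $(\delta F)^k$. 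Using the defining relation $\theta=\delta F\circ J$ from \eqref{cy1}, equivalently $\delta F=J\theta$, together with $d\mu=Jd\lambda$ and the skew-adjointness of $J$ on $1$-forms, this reduces to
\begin{equation*}
\Delta^g\mu=\theta(\operatorname{grad}\mu),
\end{equation*}
and the analogous computation gives $\Delta^g\lambda=\theta(\operatorname{grad}\lambda)$. Thus both $\lambda$ and $\mu$ lie in the kernel of the elliptic operator $L:=\Delta^g-\theta^\sharp$, which has no zero-order term.

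Setting $f:=\lambda^2+\mu^2=\|N\|^2/24$ and using $|d\mu|^2=|Jd\lambda|^2=|d\lambda|^2$, expanding $Lf$ by Leibniz yields
\begin{equation*}
Lf=2\lambda(L\lambda)+2\mu(L\mu)+2|d\lambda|^2+2|d\mu|^2=4|d\lambda|^2\ge 0.
\end{equation*}
Hopf's strong maximum principle applied to $L$ on the compact closed manifold $M$ then forces $f$ to be constant; hence $d\lambda\equiv 0\equiv d\mu$, which gives $\sb N=0$, and the remaining conclusions of Theorem~\ref{thnnew} follow from Proposition~\ref{tnnew}. The main obstacle I foresee is the second paragraph: the index bookkeeping needed to identify $g^{ij}\LC_i J^k{}_j$ with $(\delta F)^k$, together with the sign tracking between $\theta$, $J$ and $\delta F$, must be carried out with some care so that the drift in $L$ comes out in the correct direction to produce $Lf\ge 0$ rather than $Lf\le 0$.
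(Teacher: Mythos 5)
Your proposal is correct and follows essentially the same route as the paper: both arguments hinge on the Cauchy--Riemann relations \eqref{new2} to produce a second-order elliptic operator with no zeroth-order term annihilating $\lambda$ and $\mu$ (your $\Delta^g-\theta^\sharp$ is, up to a constant, exactly the paper's traced operator $(dJd\mu)_{ij}F_{ij}$ arising from $0=d^2\lambda=-dJd\mu$), followed by the Hopf maximum principle on compact $M$ and then Proposition~\ref{tnnew}. Your extra Bochner-type detour through $f=\lambda^2+\mu^2$ is harmless but unnecessary --- once $L\mu=0$ for such an $L$, the strong maximum principle already forces $\mu=const$ --- and your worry about the sign of the drift is moot, since the maximum principle applies for either sign.
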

\begin{proof}
From Lemma~\ref{lcom}, we have $0=d^2\lambda=-dJd\mu$. The trace yields $(dJd\mu)_{ij}F_{ij}=0$ which is an elliptic second-order operator annihilating constants and we apply the Hopf's maximum principle (see e.g. \cite{YB,GFS}) to conclude $\mu=const$. Hence $d\lambda=d\mu=0$ due to \eqref{new2}. Now, \eqref{new3} and Proposition~\ref{tnnew} complete the proof.
\end{proof}
\begin{cor}\label{pnew}
Let $(M,g,J,\Psi)$ be a 6-dimensional  ACYT manifold. 

If the Lee form is closed, $d\theta=0$, then the Nijenhuis tensor is $\sb-$parallel, $\sb N=0$.

If the  Lee form vanishes, $\theta=0$ then the Nijenhuis tensor $N$ is parallel with respect to the torsion connection, the torsion 3-form is co-closed  and the Ricci tensor of $\sb$ is symmetric,
\[\sb N=\delta T=0,\qquad Ric(X,Y)=Ric(Y,X).\]
\end{cor}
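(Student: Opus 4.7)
The plan is to derive both halves directly from Proposition~\ref{tnnew} together with the structural formulas \eqref{torcy1}, \eqref{nn1} and \eqref{cycon}, so no heavy machinery is needed.

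For the first statement, the hypothesis $d\theta=0$ trivially implies that $d\theta$ is $J$-invariant: the vanishing 2-form satisfies $d\theta(X,Y)=0=d\theta(JX,JY)$. Condition (a) of Proposition~\ref{tnnew} is therefore met, and its equivalence with (c) yields $\sb N=0$ at once.

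For the second statement, $\theta=0$ certainly gives $d\theta=0$, so by the first part $\sb N=0$. Recalling that $N=\lambda\Psi^++\mu\Psi^-$ from \eqref{nn1} and that $\sb\Psi^\pm=0$ on an ACYT space, we deduce $d\lambda=d\mu=0$, i.e.\ $\lambda$ and $\mu$ are (locally) constant. Now I would use the expression \eqref{torcy1} of the torsion, which reduces under $\theta=0$ to
\begin{equation*}
T=-\delta\Phi+\lambda\Psi^++\mu\Psi^-.
\end{equation*}
Applying the codifferential and using $\delta^2=0$ leaves only $\delta T=\lambda\,\delta\Psi^++\mu\,\delta\Psi^-$.

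The remaining step is to compute $\delta\Psi^\pm$. From \eqref{cycon} with $\theta=0$ we have $d\Psi^+=-\lambda\,{*F}$ and $d\Psi^-=-\mu\,{*F}$. Combining this with the Hodge duality relations $*\Psi^+=\Psi^-$, $*\Psi^-=-\Psi^+$ from \eqref{star} and the identity $**F=F$ on 2-forms in dimension six, a one-line calculation gives $\delta\Psi^+=\mu F$ and $\delta\Psi^-=-\lambda F$. Substituting yields $\delta T=\lambda\mu F-\mu\lambda F=0$. Finally, the symmetry of the torsion-connection Ricci tensor follows at once from the general identity $Ric(X,Y)-Ric(Y,X)=-(\delta T)(X,Y)$ in \eqref{rics}. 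There is no serious obstacle here; the only point requiring a moment of care is realising that $\sb N=0$ is what pulls $\lambda,\mu$ outside the codifferential, after which the computation is purely a Hodge-star bookkeeping exercise.
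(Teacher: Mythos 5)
Your argument is correct and follows essentially the same route as the paper: the first claim is exactly the paper's appeal to Proposition~\ref{tnnew} (with $d\theta=0$ trivially $J$-invariant), and for the second claim the paper likewise uses $\theta=0\Rightarrow d\lambda=d\mu=0$ and then computes $d*T=\lambda\, d\Psi^- -\mu\, d\Psi^+ =0$ from \eqref{torcy} and \eqref{cycon}, which is your $\delta T=\lambda\,\delta\Psi^+ +\mu\,\delta\Psi^-=0$ up to a Hodge star, before concluding the symmetry of $Ric$ from \eqref{rics}. No gaps.
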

\begin{proof}
If $d\theta=0$, one has $\sb N=0$ due to Proposition~\ref{tnnew}.

Suppose $\theta=0$. Then $d\lambda=d\mu=0$ due to \eqref{new3} and \eqref{torcy} together with \eqref{cycon} and \eqref{star} imply
\[d*T=\lambda d\sp-\mu d\ps=(-\lambda\mu+\mu\lambda)\Phi=0.
\]
This completes the proof in view of \eqref{rics}.
\end{proof}
As a consequence of the proof of Proposition~\ref{tnnew} and Corollary~\ref{pnew} we characterize Ricci-flat balanced ACYT.
\begin{thrm}\label{thnew}
Let $(M,g,J,\Psi)$ be a 6-dimensional balanced ACYT manifold,  
$\theta=0$.

The following conditions are equivalent:
\begin{itemize}
\item[a)] The Ricci tensor of the torsion connection vanishes, $Ric=0$;
\item[b)] The torsion 3-form is harmonic, i.e. it is closed and co-closed, $dT=\delta T=0$.
\end{itemize}
\end{thrm}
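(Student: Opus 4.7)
The plan is to exploit Corollary~\ref{pnew} so that $\delta T=0$ comes for free, collapse the equivalence to a single statement about $dT$ via \eqref{su}, and close the loop with a contraction isomorphism between $\Lambda^4$ and $\Lambda^2$ in dimension six.

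Because $\theta=0$, Corollary~\ref{pnew} already supplies $\delta T=0$, so condition (b) collapses to $dT=0$. The implication (b)$\Rightarrow$(a) is then immediate from \eqref{su} with $\theta=0$, which reads $Ric(X,Y)=\tfrac14 dT(X,JY,e_i,Je_i)$ and vanishes as soon as $dT=0$. Conversely, the same identity shows that $Ric=0$ forces the 2-form $(X,Y)\mapsto dT(X,Y,e_i,Je_i)$ to vanish identically.

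The remaining algebraic task is to deduce $dT=0$ from the vanishing of this one trace 2-form. For this I would invoke the following lemma: on any $SU(3)$ 6-manifold, the contraction $\rho:\Lambda^4\to\Lambda^2$ defined by $A\mapsto A(\cdot,\cdot,e_i,Je_i)$ is an isomorphism. The map $\rho$ is $SU(3)$-equivariant; both $\Lambda^4$ and $\Lambda^2$ have real dimension $15$ with isomorphic decompositions $\Lambda^k_1\oplus\Lambda^k_6\oplus\Lambda^k_8$ (dimensions $1+6+8$, related pairwise by Hodge star), and the three pieces are pairwise non-isomorphic. Schur's lemma then forces $\rho$ to act as a scalar on each diagonal pair $\Lambda^4_j\to\Lambda^2_j$ and to vanish off-diagonal; the scalars are nonzero — immediately $\rho(\Phi)=-4F$ on $\Lambda^4_1$ from \eqref{iden}, and analogous checks on a single representative of $\Lambda^4_6$ and $\Lambda^4_8$ — so $\rho$ is bijective. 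Applying this to $A=dT$ yields $dT=0$, and (b) follows.

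I expect the main obstacle to be the nonvanishing of $\rho$ on the six- and eight-dimensional pieces: short Hodge-dual-plus-contract computations with an explicit test element in each, but one has to pick the representative carefully. An alternative route closer in spirit to Proposition~\ref{4-for} would use $d\lambda=d\mu=0$ (from the proof of Corollary~\ref{pnew}) to compute $dT=-d{*}dF+(\lambda^2+\mu^2)\Phi$ and then try to show that $X\lrcorner dT\in\Lambda^3_{12}$ for all $X$; however, verifying the vanishing of the $\Psi^\pm$-components of $X\lrcorner dT$ in that route looks less clean than the direct Schur-style lemma above.
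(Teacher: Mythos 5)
Your argument is correct, and the decisive algebraic step is genuinely different from the paper's. The paper proves a)$\Rightarrow$b) by combining the $su(3)$-curvature traces \eqref{ricdt} and \eqref{ricnew} with $\sb\theta=0$ and $d\lambda=d\mu=0$ to conclude that $X\lrcorner dT$ has vanishing components against $\Phi$, $\Psi^+$ and $\Psi^-$, i.e.\ $X\lrcorner dT\in\Lambda^3_{12}$ for every $X$, and then applies Proposition~\ref{4-for}; the converse uses \eqref{ricdt}. You instead read off from \eqref{su} with $\theta=0$ the single condition $dT(\cdot,\cdot,e_i,Je_i)=0$ and conclude via your lemma that contraction with $F$ is an $SU(3)$-equivariant isomorphism $\Lambda^4\to\Lambda^2$. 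That lemma is true, and the verifications you defer do go through: besides $\Phi_{ijkl}F_{kl}=4F_{ij}$ from \eqref{iden}, one checks in the frame \eqref{AA} that $e_1\wedge\Psi^+=e_{1236}+e_{1245}$ contracts to $2(e_{36}+e_{45})=-2\,e_1\lrcorner\Psi^-\neq 0$ on $\Lambda^4_6$, and that $*(e_{12}-e_{34})=e_{3456}-e_{1256}$ contracts to $-2(e_{12}-e_{34})\neq 0$ on $\Lambda^4_8$. (One small imprecision: on the complex-type summand $\Lambda_6$ the endomorphism algebra is $\cong\mathbb{C}$, so ``acts as a scalar'' should be read as ``acts as a nonzero element of a division algebra''; invertibility still follows.) Your route is more economical in that it needs only the one trace handed to you by the holonomy-reduction identity \eqref{su}, rather than the three traces the paper assembles from the curvature identities, and your contraction lemma is strictly stronger than Proposition~\ref{4-for}, so it would also streamline the analogous steps in Theorems~\ref{mainsu} and~\ref{closT}. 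The remaining ingredients --- $\delta T=0$ from Corollary~\ref{pnew}, and b)$\Rightarrow$a) by setting $dT=\theta=0$ in the same identity --- agree with the paper up to which formula is quoted.
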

\begin{proof}
Let the assumption a) holds.
The condition $Ric=0$ implies $\delta T=0$.

On an $SU(3)$ manifold in dimension six, we can express the Ricci tensor in a more appropriate form as follows. Since the torsion connection $\sb$ preserves the $SU(3)$-structure its curvature  lies in the Lie algebra $su(3)$, i.e. it satisfies
\begin{equation}\label{rr}
\begin{split}
R_{ijab}\ps_{abc}=R_{ijab}\sp_{abc}=R_{ijab}F_{ab}=0 \Longleftrightarrow R_{ijab}\Phi_{abkl}=-2R_{ijkl}.
\end{split}
\end{equation}
We have from \eqref{rr} using \eqref{1bi1}, \eqref{tit} and \eqref{dh}  that the Ricci tensor $Ric$ of  $\sb$ is given by
\begin{multline}\label{ricdt}
Ric_{ij}=\frac12R_{iabc}\Phi_{jabc}=\frac16\Big[R_{iabc}+R_{ibca}+R_{icab} \Big]\Phi_{jabc}=\frac1{12}dT_{iabc}\Phi_{jabc}+\frac16\sb_iT_{abc}\Phi_{jabc}\\
=\frac1{12}dT_{iabc}\Phi_{jabc}-\sb_i\theta_j.
\end{multline}
Similarly, we have
\begin{equation}\label{ricnew}
\begin{split}0=R_{iabc}\ps_{abc}=\frac13\Big[R_{iabc}+R_{ibca}+R_{icab} \Big]\ps_{abc}=\frac16dT_{iabc}\ps_{abc}+\frac13\sb_iT_{abc}\ps_{abc},\\
0=R_{iabc}\sp_{abc}=\frac13\Big[R_{iabc}+R_{ibca}+R_{icab} \Big]\sp_{abc}=\frac16dT_{iabc}\sp_{abc}+\frac13\sb_iT_{abc}\sp_{abc}.\end{split}
\end{equation}
From Proposition~\ref{pnew} we know that the condition $\theta=0$ implies $d\lambda=d\mu=0$. Then $Ric=0$, \eqref{ricdt} and \eqref{ricnew} yield
\begin{equation}\label{nthsn}
\begin{split}
dT_{pjkl}\Phi_{jkli}=12\sb_p\theta_i=0,\qquad
dT_{pjkl}\ps_{jkl}=-12\sb_p\lambda=0,\qquad
dT_{pjkl}\sp_{jkl}=-12\sb_p\mu=0.
\end{split}
\end{equation}
The identities \eqref{nthsn} show that the 3-form $(X\lrcorner dT)\in \Lambda^3_{12}$. Then the 4-form $dT=0$ due to Proposition~\ref{4-for}.

For the converse, if we assume b) then \eqref{ricdt} and $\theta=0$  show  $Ric=0$ which completes the proof.
\end{proof}
One gets  from \eqref{ricdt} and Theorem~\ref{tFBI}
\begin{thrm}\label{mainsu}
Let $(M,g,J,\Psi)$ be a 6-dimensional  ACYT  manifold  and  the torsion connection $\sb$   has curvature $R \in S^2\Lambda^2$ i.e. \eqref{r4} holds.

Then the  covariant derivative of the 1-form $J\theta$ with respect to $\sb$ is skew-symmetric,
\[ (\sb_XJ\theta)Y+(\sb_YJ\theta)X=0 \Longleftrightarrow  (\sb_X\theta)JY=-(\sb_Y\theta)JX.
\]
In particular, the vector field dual to the 1-form $J\theta$ is a  Killing vector field.
\begin{itemize}
\item[a)] If in addition, the Nijenhuis tensor  $N$ is of constant norm, $||N||^2=const.$
then $N$ is $\sb-$parallel, the covariant derivative of the Lee form $\theta$ with respect to $\sb$ is symmetric and $J$-invariant,
\begin{equation}\label{symJtheta}
\sb N=0,\qquad (\sb_X\theta)Y=(\sb_Y\theta)X=(\sb_{JX}\theta)JY.
\end{equation}
\item[b)] If the Lee form vanishes, $\theta=0$, i.e. the manifold is balanced $G_1$ space, $M\in W_1\oplus W_3$, then  
$$\sb T=\sb dF=\sb N=0, \qquad Scal^g=-\frac13||dF^+||^2+\frac5{64}||N||^2=const.$$
\end{itemize}
\end{thrm}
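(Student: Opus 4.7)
The plan rests on the equivalence from Lemma~\eqref{4form}: the hypothesis $R\in S^{2}\Lambda^{2}$ is equivalent to $H:=\sb T$ being totally skew-symmetric as a 4-form in all four arguments. For the first claim (the Killing property of $(J\theta)^{\sharp}$), I would start from \eqref{liff} in the form $\theta_{i}=\tfrac12 J^{k}_{i}T_{kjl}F^{jl}$ and, using $\sb J=\sb F=0$, compute
\[
(\sb_{X}\theta)(JY)\;=\;-\tfrac12\,H(X,Y,e_{j},e_{l})\,F^{jl}.
\]
The right-hand side is antisymmetric in $X,Y$ because $H$ is antisymmetric in its first two slots, giving $(\sb_{X}\theta)(JY)+(\sb_{Y}\theta)(JX)=0$. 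Translated in terms of $J\theta$ this is precisely the claimed skew-symmetry of $\sb(J\theta)$, and the passage from $\sb$ to $\LC$ costs only a correction $-\tfrac12 T$ whose symmetrization in $X,Y$ vanishes by skew-symmetry of $T$, so $(J\theta)^{\sharp}$ is Killing.

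For part~(a), Proposition~\ref{tnnew} immediately gives $\sb N=0$, and since $N=\lambda\Psi^{+}+\mu\Psi^{-}$ with $\sb\Psi^{\pm}=0$ this forces $d\lambda=d\mu=0$. Via $\lambda=\tfrac16 T_{abc}\Psi^{+}_{abc}$ (and similarly for $\mu$) the latter reads $H_{pabc}\Psi^{\pm}_{abc}=0$. Decomposing $H$ under $SU(3)$ as $H=\alpha\Phi+H_{6}+H_{8}\in\Lambda^{4}_{1}\oplus\Lambda^{4}_{6}\oplus\Lambda^{4}_{8}$, one checks using \eqref{iden} that $\Phi\cdot\Psi^{\pm}=0$ and, by Schur-type rep-theoretic reasons, $H_{8}\cdot\Psi^{\pm}=0$; hence $H\cdot\Psi^{\pm}=0$ kills precisely the $\Lambda^{4}_{6}$ piece, so $H\in\Lambda^{4}_{1}\oplus\Lambda^{4}_{8}$. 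Consequently the 2-tensor $\tilde H(X,Y):=H(X,Y,e_{j},e_{l})F^{jl}$ lies in $\Lambda^{2}_{1}\oplus\Lambda^{2}_{8}$, which are precisely the $J$-invariant 2-forms. A computation parallel to Step~1 gives $(\sb_{X}\theta)(Y)=\tfrac12\tilde H(X,JY)$, whence $\sb\theta$ is $J$-invariant; combined with the relation $A(X,Y)=A(JY,JX)$ extracted from Step~1 (after the substitution $Y\mapsto -JY$), the symmetry $A(X,Y)=A(Y,X)$ follows, and all three equalities of part~(a) are established.

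For part~(b), the vanishing Lee form forces $\sb\theta=0$, and then $\sb_{p}\theta_{q}=\tfrac16 H_{pabc}\Phi_{abcq}=0$ yields $H\cdot\Phi=0$, equivalently $\tilde H=0$. Simultaneously $d\theta=0$ triggers Proposition~\ref{tnnew} to give $\sb N=0$, hence $H\cdot\Psi^{\pm}=0$ as above. Therefore for every $X$ the 3-form $X\lrcorner H$ is orthogonal to $\Psi^{\pm}$ and to every $\alpha\wedge F$, placing it in $\Lambda^{3}_{12}$; Proposition~\ref{4-for} then forces $H=\sb T=0$. From $T=JdF+N$ and $\sb N=0$ one recovers $\sb dF=0$, and the scalar-curvature formula \eqref{scal2} with $\theta=0$ reduces to $Scal^{g}=-\tfrac13\|dF^{+}\|^{2}+\tfrac{5}{64}\|N\|^{2}$, constant because $\sb dF=\sb N=0$.

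The main obstacle will be the structural lemma in part~(a): that $H\cdot\Psi^{\pm}=0$ pins $H$ into $\Lambda^{4}_{1}\oplus\Lambda^{4}_{8}$ and that this subspace maps into the $J$-invariant 2-forms under $F$-contraction. I expect to prove it either by a compact representation-theoretic argument (Schur's lemma for the relevant $SU(3)$-modules) or by a careful index computation with the identities of \eqref{iden}; once that is in place, part~(b) is a clean application of Proposition~\ref{4-for}.
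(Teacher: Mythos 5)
Your proposal is correct in its overall structure, and the first claim (the Killing property via $\theta_iF_{iq}=-\tfrac12T_{qjk}F_{jk}$, Lemma \eqref{4form} and the cancellation of the torsion term under symmetrization) as well as part~(b) (getting $\sb\theta=\sb\lambda=\sb\mu=0$, placing $X\lrcorner\sb T$ in $\Lambda^3_{12}$ and invoking Proposition~\ref{4-for}) coincide with the paper's argument essentially line by line. Where you genuinely diverge is part~(a). The paper differentiates the structural identity \eqref{torcy2} for $T$, traces it over the first two indices, and uses the already-established relation \eqref{new} to land on $\sb_l\theta_m-\sb_m\theta_l=\tfrac12\bigl(\sb_p\lambda\,\Psi^+_{plm}+\sb_p\mu\,\Psi^-_{plm}\bigr)$, so that $d\lambda=d\mu=0$ gives the symmetry of $\sb\theta$ directly and $J$-invariance then follows from \eqref{new}. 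You instead decompose the 4-form $\sb T$ under $SU(3)$, use $\sb T\cdot\Psi^{\pm}=0$ plus Schur's lemma to kill the $\Lambda^4_6$-component, deduce that the $F$-trace $\tilde H$ is $J$-invariant, hence that $\sb\theta$ is $J$-invariant, and only then extract the symmetry by combining with the step-1 relation $A(X,JY)=-A(Y,JX)$; the order of the two conclusions is reversed. Both routes consume exactly the same input ($d\lambda=d\mu=0$, supplied by Proposition~\ref{tnnew} from $\|N\|^2=const.$), and your representation-theoretic lemma is really a $\Lambda^4$-analogue of Proposition~\ref{4-for}, so it fits the paper's toolkit; what it buys is a more conceptual explanation of why the two contractions with $\Psi^{\pm}$ control the whole $(2,0)+(0,2)$ part of $\sb\theta$, at the price of one verification you have not written out: that the contraction $H\mapsto H_{\cdot abc}\Psi^+_{abc}$ is actually nonzero (hence, by Schur, an isomorphism) on $\Lambda^4_6$. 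This is routine --- evaluating on $\alpha\wedge\Psi^+$ with the identities \eqref{iden} gives $12\alpha$ --- but without it your "kills precisely the $\Lambda^4_6$ piece" is only half proved, so you should record that computation before the argument is complete.
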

\begin{proof}
We get from \eqref{tit} using the fact proved in \cite[Lemma~3.4]{I} that the curvature of a metric connection $\sb$ with totally skew-symmetric torsion $T$ satisfies \eqref{r4} exactly when $\sb T$ is a 4-form, 
\begin{equation}\label{new}
\begin{split}
\sb_p\theta_iF_{iq}=-\frac12\sb_pT_{qjk}F_{jk}=\frac12\sb_qT_{pjk}F_{jk}=-\sb_q\theta_iF_{ip};\\
(\sb_X\theta)JY=-(\sb_Y\theta)JX;\\
(\sb_XJ\theta)Y=-(\sb_X\theta)JY=(\sb_Y\theta)JX=-(\sb_YJ\theta)X.
\end{split}
\end{equation}
\indent It follows from \eqref{new} that the vector field $J\theta$ dual to the 1-form $J\theta$ is Killing since \eqref{tsym} yields
 $(\LC_X J\theta)Y+(\LC_Y J\theta)X=0$ which proves the first part  of Theorem~\ref{mainsu}.

Take the covariant derivative of the torsion expressed in \eqref{torcy2} with respect to $\sb$ to get
\begin{multline}\label{ntor}
\sb_pT_{klm}=-\frac12\sb_pT_{jsk}\Phi_{jslm}+\frac12\sb_pT_{jsl}\Phi_{jskm}-\frac12\sb_pT_{jsm}\Phi_{jskl}\\-\sb_p\theta_s\Phi_{sklm}+\sb_p\lambda\ps_{klm} +\sb_p\mu\sp_{klm},
\end{multline}
where we used  that all the structure forms $g,F,\ps,\sp,\Phi$ are $\sb-$parallel.

The fact that $\sb T$ is a 4-form, taking the trace in \eqref{ntor} and using \eqref{tit}, yields
\begin{multline}\label{no1}
0=\sb_pT_{plm}=\frac12\sb_pT_{jsl}\Phi_{jspm}-\frac12\sb_pT_{jsm}\Phi_{jspl}-\sb_p\theta_s\Phi_{splm}+\sb_p\lambda\ps_{plm} +\sb_p\mu\sp_{plm}\\
=-3\sb_l\theta_m+3\sb_m\theta_l+\sb_p\lambda\ps_{plm}+\sb_p\mu\sp_{plm}+\frac12(\sb_p\theta_s-\sb_s\theta_p)\Big(F_{ps}F_{lm}+F_{sl}F_{pm}+F_{lp}F_{sm}\Big)\\
=-3\sb_l\theta_m+3\sb_m\theta_l+\sb_p\lambda\ps_{plm}+\sb_p\mu\sp_{plm}+\frac12(\sb_p\theta_s-\sb_s\theta_p)\Big(F_{sl}F_{pm}+F_{lp}F_{sm}\Big)\\
=-3\sb_l\theta_m+3\sb_m\theta_l+\sb_p\lambda\ps_{plm}+\sb_p\mu\sp_{plm}+(\sb_p\theta_s-\sb_s\theta_p)F_{sl}F_{pm},
\end{multline}
where we applied  the general identity $\sb_i\theta_sF_{si}=0$ which is a consequence of the fact that the 1-form $J\theta$ is co-closed. Indeed, we have  $0=\delta(J\theta)=-\LC_i(J\theta)_i=-\sb_i(J\theta)_i=-\sb_i\theta_sF_{si}$ applying \eqref{tsym}.

 Substitute  \eqref{new} into \eqref{no1} to get
 \begin{equation}\label{ew4}
d^{\sb}\theta_{lm}= \sb_l\theta_m-\sb_m\theta_l=\frac12[\sb_p\lambda\ps_{plm}+\sb_p\mu\sp_{plm}]=\sb_p\lambda\ps_{plm}=\sb_p\mu\sp_{plm},
 \end{equation}
 where we used \eqref{new2} to conclude the last equality.

 The condition $||N||^2=const.$ and Proposition~\ref{tnnew}  imply $d\lambda=d\mu=0$ and one gets $d^{\sb}\theta=0$ from \eqref{ew4}. Then \eqref{new} completes the proof of a) in Theorem~\ref{mainsu}.

To show the condition $\theta=0$ yields $\sb T=0$ we observe that \eqref{new3} implies the functions $\lambda$ and  $\mu$ are constants. In particular $\sb N=0$ due to \eqref{nn1}. Then \eqref{lmt} together with  \eqref{tit} imply
\begin{equation}\label{nths}
\begin{split}
\sb_pT_{jkl}\Phi_{jkli}=6\sb_p\theta_i=0,\qquad
\sb_pT_{jkl}\ps_{jkl}=6\sb_p\lambda=0,\qquad
\sb_pT_{jkl}\sp_{jkl}=6\sb_p\mu=0.
\end{split}
\end{equation}
The identities \eqref{nths} show that the 3-form $(X\lrcorner\sb T)\in \Lambda^3_{12}$. Hence, the 4-form $\sb T=0$ due to Proposition~\ref{4-for}. Then the formulas \eqref{cy2} and  \eqref{scal2}  complete the proof of the Theorem~\ref{mainsu}.
\end{proof}
\subsection{Example} We take the next example from \cite{II} which supports Theorem~\ref{mainsu}.

Let $G$ be the six-dimensional connected simply connected and nilpotent
 Lie group, determined by the left-invariant 1-forms $\{e_1,\dots,e_6\}$
 such that
\begin{gather}\label{in1}
de_2=de_3=de_6=0,\\\nonumber
de_1=e_3\wedge e_6,\quad
de_4= e_2\wedge e_6, \quad
de_5= e_2\wedge e_3.
\end{gather}
Consider the metric on $G\cong \mathbb R^6$ defined by
$g=\sum_{i=1}^6e_i^2$.
Let $(F,\Psi)$ be the $SU(3)$-structure on $G$ given by \eqref{AA}. Then$(G,F,\Psi)$ is
an almost complex manifold with a $SU(3)$-structure.

It is easy to verify using  \eqref{cy1} and \eqref{in1} that
\begin{gather}\label{in2}
dF=-3e_{236}, \quad  N=-\Psi^-, \quad d\Psi^-=*F, \quad
(N,\Psi^-)=-4, \\ \nonumber \theta=d\Psi^+=(N,\Psi^+)=0.
\end{gather}

Hence, $(G,\Psi,g,J)$ is neither complex nor Nearly K\"ahler manifold but it fulfills
the conditions \eqref{cycon} of Theorem~\ref{cythm1} and
therefore there exists a $SU(3)$ connection with torsion 3-form on $(G,\Psi,g,J)$.
The expression
\eqref{torcy} and \eqref{in2} give
\begin{equation}\label{tor}
T=-2e_{145}+e_{136}+e_{235}-e_{246}, \quad dT=-2(e_{1256}+e_{3456}+e_{1234})=2*F.
\end{equation}
The equalities  \eqref{tor} and  \eqref{cy2} imply  the nonzero essential terms
of the torsion connection are (c.f. \cite{II})
\begin{gather}\label{tor1}
\nabla_{e_1}e_6=-e_3, \qquad  \nabla_{e_5}e_2= e_3, \qquad \nabla_{e_4}e_6=-e_2, \\\nonumber
\nabla_{e_4}e_5=-e_1, \qquad \nabla_{e_5}e_1=-e_4, \qquad \nabla_{e_1}e_4=-e_5.
\end{gather}
It is easy to verify {cf. \cite{II}) that the curvature of the torsion connection satisfies \eqref{r4} and  the torsion tensor $T$ as well as the Nijenhuis tensor $N$ and $dF$ are parallel with respect to the connection $\nabla$.

The coefficients of the structure equations of the Lie algebra given by \eqref{in1}
are integers. Therefore, the well-known theorem of Malcev \cite{Mal} states that
the group $G$ has a uniform discrete subgroup $\Gamma$ such that $Nil^6=G/\Gamma$ is a
compact 6-dimensional nil-manifold. The $SU(3)$-structure, described above, descends to $Nil^6$
and therefore we obtain a compact example.

\section{Proof of Theorem~\ref{mainsu3} and Theorem~\ref{cmainsu3}}
We begin with the next
\begin{lemma}\label{sbtheta}
Let $(M,g,J,\Psi)$ be an ACYT 6-manifold 
satisfying the condition \eqref{s2l2}. 

 Then the Lee form $\theta$ is $\sb-$parallel, $\sb\theta=0.$

In particular, the Lee form is co-closed, $\delta\theta=0$.
\end{lemma}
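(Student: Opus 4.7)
The plan is to exploit the three conditions in \eqref{s2l2} in sequence.

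\textbf{First}, the assumption $R\in S^2\Lambda^2$ is exactly the symmetry \eqref{r4}, and together with the hypothesis that $\|N\|^2$ is constant it triggers Theorem~\ref{mainsu}~(a). This produces immediately $\sb N=0$ (so that $\sb\lambda=\sb\mu=0$ via \eqref{nn1}) and, crucially, that $\sb\theta$ is symmetric and $J$-invariant, i.e.\ $\sb\theta\in S^2_+$. It also places $\sb T$ in $\Lambda^4$ via \eqref{4form}.

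\textbf{Second}, inserting $Ric=0$ into the Ricci identity \eqref{ricdt} for the $SU(3)$-structure yields $dT_{iabc}\Phi_{jabc}=12\,\sb_i\theta_j$. Using $dT=4\sb T+2\sigma^T$, which follows from $\sb T\in\Lambda^4$ via \eqref{tgt} and \eqref{4form}, together with \eqref{tit} (which gives $T_{abc}\Phi_{jabc}=-6\theta_j$) and $\sb\Phi=0$, one obtains $(\sb T)_{iabc}\Phi_{jabc}=\sb_i(T_{abc}\Phi_{jabc})=-6\,\sb_i\theta_j$. Substituting and rearranging yields the key identity
\begin{equation*}
\sigma^T_{iabc}\Phi_{jabc}=18\,\sb_i\theta_j.
\end{equation*}

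\textbf{Third}, the remaining task is to compute $\sigma^T\cdot\Phi$ independently. Writing $T$ explicitly as in \eqref{torcy} and expanding $\sigma^T=\sum_k(e_k\lrcorner T)\wedge(e_k\lrcorner T)$ component-wise via the $SU(3)$ contraction identities \eqref{iden}, one uses the already-established constraints $\sb\lambda=\sb\mu=0$ and $\sb\theta\in S^2_+$ to match both sides of the displayed identity. The compatibility constraints, organised according to the $SU(3)$-decomposition $\Lambda^3=\Lambda^3_{\mathrm{Re}}\oplus\Lambda^3_{\mathrm{Im}}\oplus\Lambda^3_6\oplus\Lambda^3_{12}$, are expected to force $\sb\theta=0$.

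The main obstacle I anticipate is this third step: the algebraic expansion of $\sigma^T\cdot\Phi$ is lengthy and must be bookkept carefully using the decomposition of the 3-form part of $T$. Proposition~\ref{4-for} is likely to play a role, since by \eqref{ricnew} combined with $\sb\lambda=\sb\mu=0$ the $\Psi^\pm$-components of $X\lrcorner dT$ vanish identically, so that $X\lrcorner dT\in\Lambda^3_6\oplus\Lambda^3_{12}$, with the $\Lambda^3_6$-part controlled precisely by $\sb_X\theta$; a nonzero $\sb\theta$ would then have to be obstructed by the specific algebraic form of $\sigma^T\cdot\Phi$. Once $\sb\theta=0$ is established, the co-closedness $\delta\theta=-\sb_i\theta^i=0$ is immediate.
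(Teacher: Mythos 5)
Your first two steps are sound and in fact land you at essentially the same intermediate point as the paper: the identity $\sigma^T_{iabc}\Phi_{jabc}=18\,\sb_i\theta_j$ you derive is (after expanding $\Phi_{jabc}=F_{ja}F_{bc}+F_{ab}F_{jc}+F_{bj}F_{ac}$ and using the antisymmetry of $\sigma^T$) equivalent to the paper's equation \eqref{li}, which expresses $(\sb_X\theta)Y$ as an explicit quadratic expression in $T$. The problem is your third step, which is not an argument but a hope: you say the algebraic expansion of $\sigma^T\cdot\Phi$ ``is expected to force $\sb\theta=0$.'' It does not, at least not by any pointwise algebra that has been exhibited; the identity merely rewrites the first-order quantity $\sb\theta$ as a zeroth-order quadratic in $T$, and nothing in the $SU(3)$-decomposition of $\Lambda^3$ obstructs a nonzero symmetric $J$-invariant $\sb\theta$ at a single point. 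Your appeal to Proposition~\ref{4-for} also runs backwards: that proposition applies only once $X\lrcorner dT$ is known to lie in $\Lambda^3_{12}$, i.e.\ only \emph{after} the $\Lambda^3_6$-component --- which is exactly $\sb_X\theta$ --- has been killed; it cannot be the tool that kills it.

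What is actually needed (and what the paper does) is a second, genuinely differential mechanism. From \eqref{li} together with the symmetry and $J$-invariance of $\sb\theta$ and the identity $T(X,Y,\theta)=T(JX,JY,\theta)$ one first shows $(\sb_X\theta)\theta=-(\sb_X\theta)\theta$, hence $\|\theta\|^2=\mathrm{const}$. Then one runs a Bochner/Weitzenb\"ock computation on the Killing field $J\theta$ of constant length: $0=\nabla_{e_i}\nabla_{e_i}\|J\theta\|^2=(\nabla_{e_i}\nabla_{e_i}J\theta)(e_j)J\theta(e_j)+\|\nabla J\theta\|^2$, commutes the second covariant derivatives via the Ricci identity for $\sb$ (using $Ric=0$ a second time, now through the curvature term $R(e_i,e_j,e_i,e_k)$), and is left with a single cross term $T(Je_i,J\theta,e_k)(\nabla_{e_k}\theta)e_i$, which a further computation shows is equal to its own negative and hence zero. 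Only then does $\|\nabla J\theta\|^2=0$ follow. None of this second-order machinery appears in your outline, so as written the proof has a genuine gap precisely at the step that carries the entire content of the lemma.
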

\begin{proof}
We already know from  Theorem~\ref{mainsu} a)  that  $d\lambda=d\mu=0$ and \eqref{symJtheta} holds.

Since  $\sb T$ is a 4-form, substitute \eqref{dh} into \eqref{su} to get
\begin{multline}\label{su1}
0=Ric(X,Y)+(\sb_X\theta)(Y)-(\sb_X T)(JY,e_i,Je_e)-\frac12\sigma^T(X,JY,e_i,Je_i)\\
=Ric(X,Y)+(\sb_X\theta)(Y)+2(\sb_X\theta)Y-\frac12\sigma^T(X,JY,e_i,Je_i)\\
=Ric(X,Y)+3(\sb_X\theta)(Y))-\frac12\sigma^T(X,JY,e_i,Je_i).
\end{multline}
Let $Ric=0$. Then we have from \eqref{su}, \eqref{su1} and \eqref{liff}
\begin{equation}\label{li}
\begin{split}
-\frac12(\sb_XT)(JY,e_i,Je_i)=(\nabla_X\theta)Y=\frac14dT(X,JY,e_i,Je_e)=\frac16\sigma^T(X,JY,e_i,Je_i)\\
=-\frac13T(X,JY,J\theta)-\frac13T(X,e_i,e_j)T(JY,Je_i,e_j).
\end{split}
\end{equation}
Using that $d^{\sb}\theta(X,Y)=(\sb_X\theta)Y-(\sb_Y\theta)X=-(d\mu\lrcorner\sp)(X,Y)=0$ due to \eqref{new3} and \eqref{symJtheta}, we get
\begin{equation*}
d\theta(X,Y)=(\sb^g_X\theta)Y-(\sb^g_Y\theta)X=-(d\mu\lrcorner\sp)(X,Y)+T(X,Y,\theta),
\end{equation*}
which combined with \eqref{new3} and \eqref{symJtheta} yields
\begin{equation}\label{dt2}
T(X,Y,\theta)-T(JX,JY,\theta)=-3d^{\sb}\theta(X,Y)=-3(d\mu\lrcorner\sp)(X,Y)=-3(d\lambda\lrcorner\ps)(X,Y)=0.
\end{equation}
We have from \eqref{li},  the symmetricity of $\sb \theta$ and \eqref{dt2} that
\begin{multline*}
(\sb_X\theta)\theta=(\sb_{\theta}\theta)X=-\frac13T(\theta,JX,J\theta)-\frac13T(\theta,e_i,e_j)T(JX,Je_i,e_j)=-\frac13T(\theta,e_i,e_j)T(JX,Je_i,e_j)\\=\frac13T(\theta,Je_i,e_j)T(JX,e_i,e_j)
=-\frac13T(\theta,e_i,Je_j)T(JX,e_i,e_j)=-(\sb_{\theta}\theta)X=-(\sb_X\theta)\theta,
\end{multline*}
which shows $\sb||\theta||^2=2(\sb_X\theta)\theta=0$ and $|\theta|^2=|J\theta|^2=const.$

Since $|J\theta|^2=const.$ we have
\begin{equation}\label{weitz}
\begin{split}
0=\nabla_{e_i}\nabla_{e_i}||J\theta||^2=(\nabla_{e_i}\nabla_{e_i}J\theta)(e_j).J\theta(e_j)+||\nabla J\theta||^2.
\end{split}
\end{equation}
We use the Killing condition of $J\theta$ and the Ricci identities for $\sb$ to evaluate the first term in \eqref{weitz}
\begin{multline}\label{w11}
(\nabla_{e_i}\nabla_{e_i}J\theta)(e_j).J\theta(e_j)=-(\nabla_{e_i}\nabla_{e_j}J\theta)(e_i).J\theta(e_j)\\
=-(\nabla_{e_j}\nabla_{e_i}J\theta)(e_i).J\theta(e_j)+R(e_i,e_j,e_i,e_k)J\theta(e_k)J\theta(e_j)+T(e_i,e_j,e_k)(\nabla_{e_k}J\theta)e_i.J\theta(e_j)\\=-Ric(J\theta,J\theta)+T(e_i,J\theta,e_k)(\nabla_{e_k}J\theta)e_i=T(e_i,J\theta,e_k)(\nabla_{e_k}J\theta)e_i=-T(e_i,J\theta,e_k)(\nabla_{e_k}\theta)Je_i\\=T(Je_i,J\theta,e_k)(\nabla_{e_k}\theta)e_i.
\end{multline}
We calculate from \eqref{dt2},  \eqref{symJtheta}  and \eqref{li}
\begin{multline}\label{wt4}
0=(\nabla_{e_i}T)(X,e_i,\theta)+T(X,e_i,e_j)(\nabla_{e_i}\theta)(e_j)= (\nabla_{e_i}T)(JX,Je_i,\theta)+T(JX,Je_i,e_j)(\nabla_{e_i}\theta)(e_j)\\=\frac13\sigma^T(JX,Je_i,\theta,e_i)+T(JX,Je_i,e_j)(\nabla_{e_i}\theta)(e_j)\\=\frac13\Big[T(JX,Je_i,e_k)T(e_k,\theta,e_i)+T(Je_i,\theta,e_k)T(e_k,JX,e_i)+T(\theta,JX,e_k)T(e_k,Je_i,e_i)\Big]\\
-T(Je_i,JX,e_j)(\nabla_{e_j}\theta)(e_i)=\frac23\Big[T(JX,Je_i,e_k)T(e_k,\theta,e_i)-T(\theta,JX,J\theta)\Big]-T(Je_i,JX,e_j)(\nabla_{e_j}\theta)(e_i)\\=\frac23T(JX,Je_i,e_k)T(e_k,\theta,e_i)-T(Je_i,JX,e_j)(\nabla_{e_j}\theta)(e_i).
\end{multline}
Set $X=\theta$ into \eqref{wt4} to get applying \eqref{dt2}
\begin{multline*}
3T(Je_i,J\theta,e_j)(\nabla_{e_j}\theta)(e_i)=2T(J\theta,Je_i,e_k)T(\theta,e_i,e_k)=-2T(J\theta,e_i,e_k)T(\theta,Je_i,e_k)\\=2T(J\theta,e_i,e_k)T(\theta,e_i,Je_k)=-2T(J\theta,e_i,Je_k)T(\theta,e_i,e_k)\\=-3T(Je_i,J\theta,e_j)(\nabla_{e_j}\theta)(e_i).
\end{multline*}
Hence,
\begin{equation}\label{wt6}
T(Je_i,J\theta,e_j)(\nabla_{e_j}\theta)(e_i)=0.
\end{equation}
Substitute \eqref{wt6} into \eqref{weitz} to get $\sb J\theta=0=\sb\theta$ which completes the proof of Lemma~\ref{sbtheta}
\end{proof}
To finish the proof of Theorem~\ref{mainsu3} we observe  from \eqref{li}, \eqref{tit}, \eqref{lmt},  \eqref{ricdt}, \eqref{ricnew} and Lemma~\ref{sbtheta} the validity of the following  identities
\begin{equation}\label{nth}
\begin{split}
2\sb_pT_{jkl}\Phi_{jkli}=12\sb_p\theta_i=0=dT_{pjkl}\Phi_{jkli};\\
2\sb_pT_{jkl}\ps_{jkl}=12\sb_p\lambda=0=-dT_{pjkl}\ps_{jkl};\\
2\sb_pT_{jkl}\sp_{jkl}=12\sb_p\mu=0=-dT_{pjkl}\sp_{jkl}.
\end{split}
\end{equation}
The identities \eqref{nth} show that the 3-forms $(X\lrcorner\sb T)\in \Lambda^3_{12}$ and $(X\lrcorner dT)\in \Lambda^3_{12}$. Hence, the four forms $\sb T=dT=0$ due to Proposition~\ref{4-for} and \eqref{dh} implies $\sigma^T=0$. Therefore, $\LC T=0$ because of  \eqref{tgt}. Now Theorem~\ref{tFBI} shows that the Riemannian first Bianchi identity \eqref{RB} holds.

For the converse, it is a well-known algebraic fact that the Riemannian first Bianchi identity \eqref{RB} on an ACYT manifold implies \eqref{r4} and the vanishing of the Ricci tensor. This completes the proof of Theorem~\ref{mainsu3}. Theorem~\ref{cmainsu3} follows from Theorem~\ref{mainsu3} and Theorem~\ref{thnnew}.

\section{ACYT with closed torsion and generalized Ricci solitons}We begin with the next 
\begin{thrm}\label{closT}
Let $(M,g,J,\Psi)$ be an   ACYT 6-manifold. 
The following conditions are equivalent:
\begin{itemize}
\item[a).] The torsion is closed, $dT=0$;
\item[b).] The exterior derivative of the Lee form belongs to $su(3)\cong\Lambda^2_8$ (i.e. it is $J$-invariant trace-free 2-orm) and the Ricci tensor of the torsion connection  is equal to $-\sb\theta$,
\begin{equation}\label{clos1}d\theta(X,Y)=d\theta(JX,JY), \quad d\theta\lrcorner F=d\theta\wedge\Phi=0,  \qquad Ric=-\sb\theta;
\end{equation}
\item[c).] The  Nijenhuis tensor is parallel with respect to the torsion connection and the Ricci tensor of the torsion connection  is equal to $-\sb\theta$,
\begin{equation}\label{clos2}\sb N=0, \qquad  Ric=-\sb\theta.
\end{equation}
\end{itemize}
\end{thrm}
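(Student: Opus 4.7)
My plan is to prove $(a)\Leftrightarrow(c)$ first using \eqref{ricdt}, \eqref{ricnew}, and Proposition~\ref{4-for}, and then $(b)\Leftrightarrow(c)$ via Proposition~\ref{tnnew} together with the observation that the trace-free part of $d\theta$ is automatic on any $G_1$-manifold. The guiding idea is that \eqref{ricdt} and \eqref{ricnew} already package the three $SU(3)$-invariant contractions of $dT$ with $\Phi$, $\Psi^+$, $\Psi^-$ in terms of $\sb\theta$, $\sb\lambda$, $\sb\mu$, and Proposition~\ref{4-for} will assemble vanishing of these contractions into $dT=0$.

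For $(a)\Rightarrow(c)$: plugging $dT=0$ into \eqref{ricdt} gives $Ric=-\sb\theta$ at once, while \eqref{ricnew} combined with $dT=0$ and \eqref{lmt} yields $\sb\lambda=\sb\mu=0$. Since $N=\lambda\Psi^++\mu\Psi^-$ by \eqref{nn1} and $\sb\Psi^\pm=0$, this gives $\sb N=0$.

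The reverse $(c)\Rightarrow(a)$ is the substantive direction. From $\sb N=0$, contracting with $\Psi^+$ and $\Psi^-$ and using the identities \eqref{iden} (in particular $\Psi^+_{ipq}\Psi^+_{ipq}=24$, $\Psi^+_{ipq}\Psi^-_{ipq}=0$) one extracts $\sb\lambda=\sb\mu=0$. Combining $Ric=-\sb\theta$ with \eqref{ricdt} kills the $\Phi$-contraction, $dT_{pjkl}\Phi_{jklm}=0$; expanding $\Phi_{jklm}=F_{jk}F_{lm}+F_{kl}F_{jm}+F_{lj}F_{km}$ and using total skew-symmetry of $dT$, the three summands collapse into one and this identity becomes equivalent to $dT_{pjkl}F_{kl}=0$, i.e.\ orthogonality of $X\lrcorner dT$ to the component $\Lambda^3_6=\{\alpha\wedge F:\alpha\in\Lambda^1\}$. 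Together with $dT_{pjkl}\Psi^\pm_{jkl}=0$ coming from \eqref{ricnew} and $\sb\lambda=\sb\mu=0$, this places $X\lrcorner dT$ in $\Lambda^3_{12}$ for every $X$, and Proposition~\ref{4-for} then forces $dT=0$. The step that requires the most care is precisely the algebraic reduction from the $\Phi$-contraction to the $F$-contraction, since this is what kills the $\Lambda^3_6$ component; without that translation one would have to eliminate the $\Lambda^3_6$ part of $X\lrcorner dT$ by a separate argument.

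For $(c)\Leftrightarrow(b)$: since $\sb$ is metric, $\sb N=0$ is equivalent to $\|N\|^2=\mathrm{const}$, which by Proposition~\ref{tnnew} is equivalent to the $J$-invariance of $d\theta$. The trace-free condition $d\theta\lrcorner F=0$ holds identically on any $G_1$-manifold: via \eqref{tsym} one has $d\theta_{ij}F_{ij}=2(\sb_i\theta_j)F_{ij}+T_{ijk}\theta_kF_{ij}$; the torsion term reduces by \eqref{tit} to $2\theta_k\theta_lF_{kl}=0$, while the $\sb\theta$ term equals $-2\delta(J\theta)=0$ because $J\theta=\delta F\circ J$ and $\delta^2=0$ (combined with $\sb F=0$). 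The equation $Ric=-\sb\theta$ is common to both (b) and (c), which closes the equivalence.
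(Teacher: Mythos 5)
Your proposal is correct, and for the core equivalence $(a)\Leftrightarrow(c)$ it runs along essentially the same lines as the paper: $dT=0$ fed into \eqref{ricdt} gives $Ric=-\sb\theta$, while \eqref{ricnew} together with \eqref{lmt} gives $d\lambda=d\mu=0$ and hence $\sb N=0$ via \eqref{nn1}; conversely, the vanishing of the three contractions $dT_{iabc}\Phi_{jabc}$, $dT_{iabc}\Psi^{+}_{abc}$, $dT_{iabc}\Psi^{-}_{abc}$ places $X\lrcorner dT$ in $\Lambda^3_{12}$ and Proposition~\ref{4-for} forces $dT=0$. Your explicit reduction $dT_{iabc}\Phi_{jabc}=3\,dT_{iabc}F_{ja}F_{bc}$, hence the equivalence of the $\Phi$-contraction with $dT_{iabc}F_{bc}=0$, is a correct and useful spelling-out of why this contraction controls the $\Lambda^3_6$ component, a point the paper leaves implicit. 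Where you genuinely depart from the paper is the condition $d\theta\lrcorner F=d\theta\wedge\Phi=0$ in (b): the paper derives it \emph{from} condition (c), via $\delta T=d^{\sb}\theta$ and a computation of $d*T\wedge F$ using \eqref{torcy} and $d\lambda=d\mu=0$, whereas you observe that it is an identity on every ACYT ($G_1$) 6-manifold, since $d\theta_{ij}F_{ij}=2\sb_i\theta_jF_{ij}+T_{ijs}\theta_sF_{ij}$, the first term vanishing because $\delta(J\theta)=\delta^2F=0$ and $\sb F=0$, and the second by \eqref{tit} and skewness of $F$. This is shorter, hypothesis-free, and makes $(b)\Leftrightarrow(c)$ immediate from Proposition~\ref{tnnew}. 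One small imprecision: metricity of $\sb$ only yields $\sb N=0\Rightarrow\|N\|^2=\mathrm{const}$; the converse is not formal, but it is part of Proposition~\ref{tnnew}, which you invoke anyway, so nothing is lost.
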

\begin{proof}
The condition $dT=0$ and the equality \eqref{ricdt} imply $Ric=-\sb\theta$. We get from \eqref{ricnew} and \eqref{lmt} $d\lambda=d\mu=0$ since the torsion is closed. Hence, c)  follows from \eqref{nn1}.

Now Proposition~\ref{tnnew}
implies the first and the third identities in \eqref{clos1}.

To show that  the second equality in \eqref{clos1}  follows from c)  we observe  $-\delta T_{ij}=Ric_{ij}-Ric_{ji}=d^{\sb}\theta_{ij}$. Consequently, we have
$d^{\sb}\theta_{ij}F_{ij}=\sb_i\theta_j F_{ij}=0$ since $\delta J\theta=\sb_i\theta_jF_{ij}=\d^2 F=0$. Hence,
\begin{equation}\label{dsu1}
0=(d^{\sb}\theta\lrcorner F).vol=-(\d T\lrcorner F). vol=\d T\wedge\Phi=\frac12\delta T\wedge F\wedge F.
\end{equation}
On the other hand, we have from \eqref{torcy} using $d\lambda=d\mu=0$ that
\begin{multline*}
d*T=d\theta\wedge F-\theta\wedge dF+\lambda d\sp-\mu d\ps\\
=d\theta\wedge F-\theta\wedge dF +\lambda\Big[\theta\wedge\sp-\mu *F\Big]-\mu\Big[\theta\wedge\ps-\lambda *F\Big]\\
=d\theta\wedge F-\theta\wedge dF +\lambda\theta\wedge\sp-\mu\theta\wedge\ps,
\end{multline*}
yielding  the identity
\begin{equation*}
\begin{split}
d*T\wedge F=d\theta\wedge F\wedge F-\theta\wedge dF\wedge F=d\theta\wedge F\wedge F+\theta\wedge(*J\theta)=d\theta\wedge F\wedge F;\\
d\theta\wedge F\wedge F=d*T\wedge F=-d*T\wedge *\Phi=\Phi\wedge\d T=0,
\end{split}
\end{equation*}
where we apply \eqref{dsu1} to obtain the last equality. Hence, b) follows from c).

For the converse, assume b) holds. The third equality  in \eqref{clos1} combined with \eqref{ricdt} yields
\[dT_{jabc}\Phi_{iabc}=0.\]
The first equality in \eqref{clos1} implies $d\lambda=d\mu=0$ due to Proposition~\ref{tnnew}. Apply \eqref{ricnew} and \eqref{lmt} to get
\[dT_{jabc}\ps_{abc}=dT_{jabc}\sp_{abc}=0.\]
The last three equalities show that the 3-form $(X\lrcorner dT)\in \Lambda^3_{12}$. Hence, the four form $dT=0$ due to Proposition~\ref{4-for} which completes the equivalences between a) and b).
\end{proof}
In view of Theorem~\ref{thnnew} we get from Theorem~\ref{closT}
\begin{cor}\label{corclosT}
Let $(M,g,J,\Psi)$ be a compact  ACYT 6-manifold.
The following conditions are equivalent:
\begin{itemize}
\item[a).] The torsion is closed, $dT=0$;
\item[b).] The Ricci tensor of the torsion connection satisfies  $Ric=-\sb\theta$.

In particular $d\theta\in su(3)\cong\Lambda^2_8$.
\end{itemize}
\end{cor}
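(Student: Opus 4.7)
The plan is to deduce Corollary~\ref{corclosT} as a direct compact specialization of Theorem~\ref{closT}, with compactness supplying the parallelism of the Nijenhuis tensor for free via Theorem~\ref{thnnew}. In other words, the corollary is essentially a repackaging: the three-way equivalence a)$\Leftrightarrow$b)$\Leftrightarrow$c) in Theorem~\ref{closT} collapses to a two-way equivalence once $\sb N=0$ becomes automatic.

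For the direction a)$\Rightarrow$b), I would simply invoke the implication a)$\Rightarrow$b) of Theorem~\ref{closT}: the closedness of $T$ forces $Ric=-\sb\theta$ and simultaneously $d\theta$ to be $J$-invariant and to satisfy $d\theta\wedge\Phi=0$. Together, these two conditions identify $d\theta$ with an element of $\Lambda^2_8\cong su(3)$, so the ``in particular'' clause of the corollary is handled at the same time.

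For the converse b)$\Rightarrow$a), the only ingredient beyond Theorem~\ref{closT} is to recover its condition c) from the weaker hypothesis $Ric=-\sb\theta$. Here compactness enters decisively: Theorem~\ref{thnnew} (obtained via Hopf's maximum principle applied to the scalar function $\mu$ whose trace-type second-order equation was derived in the proof of that theorem) gives $\sb N=0$ unconditionally on a compact ACYT 6-manifold. Combining this with the assumed $Ric=-\sb\theta$ yields exactly condition c) of Theorem~\ref{closT}, whence $dT=0$.

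I do not expect any serious obstacle in this corollary itself, since no new computation is required. The genuine work was already carried out earlier, in the proof of Theorem~\ref{closT}---particularly in deducing $d\theta\wedge\Phi=0$ from $\sb N=0$ and $Ric=-\sb\theta$ through the identity for $d*T$ and the codifferential of $T$---and in Theorem~\ref{thnnew}, whose Hopf argument is precisely what lets compactness substitute for the auxiliary hypothesis $\sb N=0$.
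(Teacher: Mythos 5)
Your proposal is correct and follows exactly the paper's own route: the authors prove this corollary precisely by combining Theorem~\ref{closT} with Theorem~\ref{thnnew}, using compactness (via the Hopf maximum principle argument) to supply $\sb N=0$ for free so that the hypothesis $Ric=-\sb\theta$ alone recovers condition c) of Theorem~\ref{closT}. No difference in approach; your elaboration of the two directions matches the intended argument.
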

As a consequence of Theorem~\ref{closT}, we obtain
\begin{cor}\label{cclosT}
Let $(M,g,J,\Psi)$ be a  compact  CYT 6-manifold. 
The following conditions are equivalent:
\begin{itemize}
\item[a).]  The CYT space is pluriclosed, $dT=\partial\bar\partial F=0$;
\item[b).] The Ricci tensor of the torsion connection satisfies  $Ric=-\sb\theta$.
\end{itemize}
In particular  $d\theta\in su(3)\cong\Lambda^2_8$.
\end{cor}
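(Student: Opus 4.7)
The plan is to obtain Corollary~\ref{cclosT} as an immediate specialization of Corollary~\ref{corclosT} to the complex (Hermitian) setting. First, I would observe that a CYT 6-manifold is precisely a Hermitian ACYT 6-manifold with integrable almost complex structure, so the Nijenhuis tensor vanishes, $N=0$, the torsion connection $\sb$ coincides with the Strominger-Bismut connection, and all hypotheses of Corollary~\ref{corclosT} are met. Hence condition b) of the present corollary is literally the same as condition b) of Corollary~\ref{corclosT}, and the additional assertion $d\theta\in su(3)\cong\Lambda^2_8$ is also inherited without change.

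The only bridging step is to identify condition a) of Corollary~\ref{cclosT} with condition a) of Corollary~\ref{corclosT}. For this I would use formula \eqref{cy2} with $N=0$, which reduces to $T=-dF(J\cdot,J\cdot,J\cdot)=JdF$, together with the classical Hermitian identity $dT=-2i\,\partial\bar\partial F$. This identity is well known and follows from a routine type decomposition of $d(JdF)$ on a complex manifold. It yields the equivalence
\begin{equation*}
dT=0\;\Longleftrightarrow\;\partial\bar\partial F=0,
\end{equation*}
i.e. closed torsion for the Strominger-Bismut connection matches exactly the pluriclosed (SKT) condition. Substituting this equivalence into Corollary~\ref{corclosT} gives the desired a)$\Leftrightarrow$b) of Corollary~\ref{cclosT}.

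I do not expect any real obstacle; the result is essentially a translation of Corollary~\ref{corclosT} into the classical language of pluriclosed Hermitian geometry. The only point of minor care is to invoke the standard Hermitian identity $dT=-2i\,\partial\bar\partial F$ (equivalently, $dJdF=2i\partial\bar\partial F$) to match the two notions of "closed torsion."
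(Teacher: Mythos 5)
Your proposal is correct and matches the paper's (implicit) argument: the paper presents this corollary as an immediate consequence of Theorem~\ref{closT} in the complex case $N=0$, where conditions b) and c) of that theorem collapse and the identification of $dT=0$ with $\partial\bar\partial F=0$ is the standard Hermitian fact already quoted in the introduction. Your detour through Corollary~\ref{corclosT} and the identity $T=JdF$, $dT=-2i\,\partial\bar\partial F$ is exactly the intended specialization.
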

\subsection{Proof of Theorem~\ref{closTt}}

\begin{proof} The equivalence of a) and c)  as well as  d) and f) follow from Theorem~\ref{closT}.

The second Bianchi identity for the torsion connection reads \cite[Proposition~3.5]{IS}
\begin{equation}\label{e1}
d(Scal)_j-2\sb_iRic_{ji}+\frac16d||T||^2_i+\delta T_{ab}T_{abj}+\frac16T_{abs}dT_{jabc}=0.
\end{equation}
From Theorem~\ref{closT}  we have taking into account \eqref{rics}
\begin{equation}\label{nnewt}Ric=-\sb\theta, \quad Scal =\delta \theta, \quad \delta T_{ij}=\sb_i\theta_j-\sb_j\theta_i.
\end{equation}
Substitute  \eqref{nnewt} into \eqref{e1} and use $dT=0$ to get
\begin{equation}\label{e11}
\sb_j\delta\theta+2\sb_i\sb_j\theta_i+\delta T_{ab}T_{abj}+\frac16d||T||^2_j=0.
\end{equation}
We evaluate the second term in \eqref{e11} using the Ricci identity for $\sb$ and \eqref{nnewt} as follows
\begin{equation}\label{e111}
\sb_i\sb_j\theta_i=\sb_j\sb_i\theta_i-R_{ijis}\theta_s-T_{ijs}\sb_s\theta_i=-\sb_j\delta \theta-\sb_j\theta_s.\theta_s-\frac12\delta T_{si}T_{sij}.
\end{equation}
We obtain from \eqref{e111} and\eqref{e11}
\begin{equation}\label{tt1}
-\sb_j\delta \theta-2\sb_j\theta_s.\theta_s+\frac16\sb_j||T||^2=0.
\end{equation}
Another covariant derivative of \eqref{tt1} together with \eqref{nnewt} yield
\begin{equation}\label{tt2}
\Delta\delta\theta-2\sb_j\sb_j\theta_s.\theta_s-2||Ric||^2-\frac16\Delta||T||^2=0,
\end{equation}
where $\Delta$ is the Laplace operator $\Delta=-\LC_i\LC_if=-\sb_i\sb_if$ acting on smooth function $f$ since the torsion of $\sb$ is  a 3-form.

On the other hand,  \eqref{e1} yields
\begin{equation}\label{is1}
\sb_i\sb_j\theta_i=\sb_i(\sb_i\theta_j-\delta T_{ij})=\sb_i\sb_i\theta_j-\sb_i\delta T_{ij}=\sb_i\sb_i\theta_j-\frac12\delta T_{ia}T_{iaj},
\end{equation}
where we used the   identity  
\begin{equation}\label{iii}
\sb_i\delta T_{ij}=\frac12\delta T_{ia}T_{iaj}.
\end{equation}
shown in \cite[Proposition~3.2]{IS} for any metric connection with skew torsion.

We give a proof of \eqref{iii} for completeness.  The identity $\delta^2=0$ together with \eqref{tsym} and \eqref{tgt} imply
$$0=\delta^2 T_k=\LC_i\LC_jT_{ijk}=\LC_i\sb_jT_{ijk}=\sb_i\delta T_{ik}+\frac12T_{iks}\delta T_{is}=\sb_i\delta T_{ik}-\frac12\delta T_{is}T_{isk}.$$
The equalities \eqref{is1} and \eqref{e11} yield
\begin{equation}\label{is2}
\sb_j\delta \theta+2\sb_i\sb_i\theta_j+\frac16\sb_j||T||^2=0.
\end{equation}
The identities  \eqref{is2} and \eqref{tt2} imply
\begin{equation}\label{fmax}
\Delta\Big(\delta\theta-\frac16||T||^2 \Big)+\theta_j\sb_j\Big( \delta\theta+\frac16||T||^2\Big)=||Ric||^2\ge 0.
\end{equation}
Assume b) holds. Since $M$ is compact and $d||T||^2=0$ we may apply  the strong maximum principle (see e.g. \cite{YB,GFS}) to achieve $\delta\theta=const.=0$. Conversely, the condition $\delta\theta=0$ implies $d||T||^2=0$ by the strong maximum principle applied to \eqref{fmax}. Hence b) is equivalent to f).

Further, assume f) or b).
Then \eqref{su2} implies $d||\theta||^2=0$ since $dT=\delta\theta=0=d||T||^2$ and the norm of the  Nijenhuis tensor  is constant. We calculate using the Ricci identity for $\sb$, \eqref{nnewt} and \eqref{iii}
\begin{equation*}
\begin{split}
0=\frac12\sb_i\sb_i||\theta||^2=\theta_j\sb_i\sb_i\theta_j+||\sb\theta||^2=\theta_j\sb_i\Big(\sb_j\theta_i+\delta T_{ij}\Big)+||\sb\theta||^2\\
=-\theta_j\sb_j\delta\theta-R_{ijis}\theta_s\theta_j-\theta_jT_{ijs}\sb_s\theta_i+\theta_j\sb_i\delta T_{ij}+||\sb\theta||^2\\
=-\sb_j\theta_s\theta_s\theta_j-\frac12\theta_j\delta T_{si}T_{sij}+\frac12\theta_j\delta T_{ia}T_{iaj}+||\sb\theta||^2\\
=-\frac12\theta_j\sb_j||\theta||^2+||\sb\theta||^2=||\sb\theta||^2.
\end{split}
\end{equation*}
Hence, we show that c) follows from b).

For the converse, suppose c) holds, $\sb\theta=0$. Then b) is a consequence from \eqref{tt1} and \eqref{nnewt}.


Finally, to show the equivalences of b) and e) we use \eqref{rics} and \eqref{nnewt} to  write \eqref{fmax} in the form
\begin{equation}\label{fmaxf}
\Delta\Big(Scal^g-\frac5{12}||T||^2 \Big)+\theta_j\sb_j\Big( Scal^g-\frac1{12}||T||^2\Big)=||Ric||^2\ge 0.
\end{equation}
The strong maximum principle applied to \eqref{fmaxf} implies b) is equivalent to e) in the same way as above.

The proof of Theorem~\ref{closTt} is completed.
\end{proof}

\subsection{Steady generalized  Ricci solitons}
It is shown in \cite[Proposition~4.28]{GFS} that a Riemannian manifold $(M,g,T)$ with a closed torsion $dT=0$ is a  steady generalized  Ricci soliton if there exists a vector field $X$ and a two form $B$ such that it is a solution to the equations
\begin{equation}\label{gein3}
Ric^g=\frac14T^2-\frac12\mathbb{L}_Xg, \qquad \delta T=B
\end{equation}
for $ B$ satisfying $d(B+X\lrcorner T)=0$,  where $\mathbb{L}_X$ is the Lie derivative in the direction of $X$.
In particular $\Delta_dT=\mathbb{L}_XT$.

We also  recall  the equivalent formulation \cite[Definition~4.31]{GFS} that a compact Riemannian manifold $(M,g,T)$ with a closed 3-form $T$ is a steady generalized   Ricci soliton with $k=0$ if there exists a vector field $X$ such that
\begin{equation}\label{gein2}
Ric^g=\frac14T^2-\frac12\mathbb{L}_Xg, \qquad \delta T=-X\lrcorner T, \qquad dT=0.
\end{equation}
If the vector field $X$ is a gradient of a smooth function $f$ then one has the notion of a steady generalized gradient Ricci soliton.

It is shown in \cite[Proposition~8.14]{GFS} that any compact pluriclosed (strong) CYT space is automatically a steady generalized Ricci soliton.

We extend this result to a 6-dimensional ACYT space with a closed torsion form.
\begin{prop}\label{grsol}
Let $(M,g,J,\Psi)$ be a 6-dimensional    ACYT manifold  with closed torsion form, $dT=0$.

Then it is a steady generalized  Ricci soliton with $X=\theta$ and $B=d\theta-\theta\lrcorner T$.
\end{prop}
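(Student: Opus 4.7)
The plan is to verify the three conditions in \eqref{gein3} directly, using as the only nontrivial input the identity $Ric=-\sb\theta$ from Theorem~\ref{closT} (which applies since we assume $dT=0$). The last condition is immediate: with $X=\theta$ and $B=d\theta-\theta\lrcorner T$ we have $B+X\lrcorner T = d\theta$, and $d^2\theta=0$. So the work lies in establishing $\delta T=B$ and the Ricci equation.

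I first dispose of $\delta T = d\theta-\theta\lrcorner T$. By the skew part of \eqref{rics}, $(\delta T)(X,Y)=Ric(Y,X)-Ric(X,Y)$, and substituting $Ric=-\sb\theta$ gives $\delta T = d^\sb\theta$, where I write $d^\sb\theta(X,Y):=(\sb_X\theta)Y-(\sb_Y\theta)X$. Comparing the ordinary exterior derivative $d\theta$ with $d^\sb\theta$ via \eqref{tsym} yields $d\theta(X,Y) = d^\sb\theta(X,Y) + T(X,Y,\theta)$; the cyclic symmetry of the 3-form $T$ identifies $T(X,Y,\theta)$ with $(\theta\lrcorner T)(X,Y)$, so
\[
\delta T \;=\; d^\sb\theta \;=\; d\theta-\theta\lrcorner T \;=\; B,
\]
as required.

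Next I verify $Ric^g=\tfrac14 T^2-\tfrac12\mathbb{L}_\theta g$. The symmetric half of \eqref{rics} reads
\[
Ric^g(X,Y) \;=\; \tfrac12\bigl(Ric(X,Y)+Ric(Y,X)\bigr)+\tfrac14 T^2(X,Y),
\]
since $\delta T$ is a 2-form and drops out of the symmetrization. With $Ric=-\sb\theta$, the symmetric part of $Ric$ equals $-\tfrac12\bigl((\sb_X\theta)Y+(\sb_Y\theta)X\bigr)$. On the other side, expanding $\mathbb{L}_\theta g$ using \eqref{tsym} gives
\[
(\mathbb{L}_\theta g)(X,Y)=(\sb_X\theta)Y+(\sb_Y\theta)X-\tfrac12\bigl(T(X,\theta,Y)+T(Y,\theta,X)\bigr),
\]
and the last bracket vanishes by the 3-form skew-symmetry of $T$. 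Matching the two expressions gives the soliton equation.

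There is no real obstacle: once the key equality $Ric=-\sb\theta$ is in hand, everything reduces to bookkeeping of the torsion correction in \eqref{tsym} and the standard cyclic/skew symmetries of $T$. The only points requiring care are (i) checking the sign conventions in the formula $\delta T(X,Y)=Ric(Y,X)-Ric(X,Y)$ from \eqref{rics}, and (ii) noticing the cancellation of the torsion contributions to $\mathbb{L}_\theta g$ (ensuring $\mathbb{L}_\theta g$ is the symmetrization of $\sb\theta$), which is exactly the reason why the generalized soliton structure fits so cleanly with closed 3-form torsion.
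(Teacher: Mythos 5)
Your proof is correct and follows essentially the same route as the paper: both arguments reduce the soliton equations to the identity $Ric=-\sb\theta$ from Theorem~\ref{closT} and then verify \eqref{gein3} by translating $\mathbb{L}_\theta g$, $d\theta$ and $\delta T$ into $\sb$-quantities via \eqref{tsym} and \eqref{rics}, with the torsion contributions cancelling by skew-symmetry. The sign bookkeeping in your two displayed identities checks out against the paper's \eqref{acy1}--\eqref{acy2}.
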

\begin{proof}
As we identify the vector field $X$ with its corresponding 1-form via the metric, we have using \eqref{tsym}
\begin{equation}\label{acy1}
dX_{ij}=\LC_iX_j-\LC_jX_i=\sb_iX_j-\sb_jX_i+X_sT_{sij}.
\end{equation}
In view of \eqref{rics}, \eqref{tsym} and \eqref{acy1} we write the first equation in \eqref{gein3} in the form
\begin{equation}\label{acy2}
Ric_{ij}=-\frac12\delta T_{ij}-\frac12(\sb_iX_j+\sb_jX_i)=-\frac12\delta T_{ij}-\sb_iX_j+\frac12dX_{ij}-\frac12X_sT_{sij}.
\end{equation}
Set $X=\theta, \quad B=d\theta-\theta\lrcorner T$ and use \eqref{clos1} to get $\d T=B$ and \eqref{acy2} is trivially satisfied. Hence, \eqref{gein3} holds since $d(B+\theta\lrcorner T)=d^2\theta=0$.
\end{proof}
One fundamental consequence of Perelman's energy formula for Ricci flow is that compact steady solitons for Ricci flow are automatically gradient.  Adapting these energy functionals to generalized Ricci
flow it is proved in  \cite[Chapter~6]{GFS}  that steady generalized Ricci solitons on compact manifolds are automatically gradient with k = 0, i.e there exists a smooth function $f$ such that the vector field $X$ is equal to the gradient of the function $f$  and \eqref{gein2} takes the form
\begin{equation}\label{gein1}
Ric^g_{ij}=\frac14T^2_{ij}-\LC_i\LC_j f, \qquad \delta T_{ij}=-df_sT_{sij}, \qquad dT=0.
\end{equation}
The smooth function $f$ is determined with $u=\exp(-\frac12f)$ where $u$ is  the first eigenfunction of the
Schr\"odinger operator
\begin{equation}\label{schro}-4\Delta +Scal^g-\frac1{12}||T||^2=-4\Delta +Scal+\frac16||T||^2,
\end{equation}
where the Laplace operator is defined by $\Delta=-\d d$ (see \cite[Lemma~6.3, Corollary~6.10, Corollary~6.11]{GFS}).

 In terms of the torsion connection \eqref{gein1} can be written in the form (see \cite{IS})
\begin{equation}\label{gein4}
Ric_{ij}=-\sb_i\sb_j f, \qquad \d T_{ij}=-df_sT_{sij}, \qquad dT=0.
\end{equation}

Thus, a pluriclosed CYT space is a steady generalized gradient Ricci soliton   \cite[Proposition~2.6]{GFJS}, (see also  \cite[Proposition~8.14]{GFS}).

We show that similar conclusions hold for a compact closed ACYT  manifold of dimension six.
\begin{thrm}\label{inf}
 Let $(M,g,J,\Psi)$ be a 6-dimensional  compact ACYT manifold 
 with closed torsion, $dT=0$. The next two conditions are equivalent:
 \begin{itemize}
\item[a).]  $(M,g,J,\Psi)$   is a steady generalized gradient Ricci soliton. i.e. there exists a smooth function $f$ determined with $u=\exp(-\frac12f)$ where $u$ is  the first eigenfunction of the Schr\"odinger operator \eqref{schro} such that \eqref{gein1}, equivalently \eqref{gein4},  hold.
\item[b).]  For $f$ determined  by the first eigenfunction $u$ of the  Schr\"odinger operator \eqref{schro} with  $u=\exp(-\frac12f)$, the vector field $$V=\theta-df \quad  is \quad \sb-parallel, \quad \sb V=0.$$
\end{itemize}
The $\sb-$parallel vector field $V$  determines $d\theta$ and  preserves the   $SU(3)$ structure $(g,J,F,\ps,\sp)$ and the Nijenhuis tensor, i.e
\begin{equation}\label{vsu3}
d\theta=V\lrcorner T, \quad \mathbb{L}_Vg=\mathbb{L}_VF= \mathbb{L}_VJ=\mathbb{L}_V\ps=\mathbb{L}_V\sp=\mathbb{L}_VN=0.
\end{equation}
The vector field $JV$ is also $\sb-$parallel and therefore Killing.
\end{thrm}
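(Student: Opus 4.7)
My plan is to combine Proposition~\ref{grsol} with the Garcia-Fern\'andez--Streets gradient theorem for compact steady generalized Ricci solitons, and then to extract $\sb V=0$ by matching two independent expressions for $Ric$. Since $dT=0$, Proposition~\ref{grsol} already presents $(M,g,T)$ as a steady generalized Ricci soliton with $X=\theta$ and $B=d\theta-\theta\lrcorner T$. Compactness together with \cite[Chapter~6, Lemma~6.3, Corollary~6.10, Corollary~6.11]{GFS} then produces a smooth $f$ with $u=\exp(-f/2)$ the first eigenfunction of the Schr\"odinger operator \eqref{schro} satisfying the gradient form \eqref{gein4}. This settles the existence assertion in a) and reduces the theorem to the equivalence a)$\Leftrightarrow$b) and to the geometric identities for $V$.

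For a)$\Leftrightarrow$b), two independent formulas for $Ric$ are available: Corollary~\ref{corclosT} gives $Ric_{ij}=-\sb_i\theta_j$, while \eqref{gein4} gives $Ric_{ij}=-\sb_i\sb_jf$. Subtracting yields $\sb_iV_j=0$, proving a)$\Rightarrow$b). Conversely, if $\sb V=0$ then $\sb\theta=\sb(df)$, so the first equation of \eqref{gein4} is Corollary~\ref{corclosT}, while the second drops out of the Ricci identity for $\sb$ on a function, $\sb_i\sb_jf-\sb_j\sb_if=-T_{ijs}\sb_sf$, combined with $\delta T_{ij}=\sb_i\theta_j-\sb_j\theta_i$ from \eqref{nnewt}. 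With $\sb V=0$ in hand, applying \eqref{acy1} to $\theta$ and using the same Ricci identity,
\[d\theta_{ij}=\sb_i\theta_j-\sb_j\theta_i+\theta_sT_{sij}=-T_{ijs}\sb_sf+\theta_sT_{sij}=(V\lrcorner T)_{ij},\]
so $d\theta=V\lrcorner T$. Also $\sb(JV)=J\sb V=0$ since $\sb J=0$, making $JV$ $\sb$-parallel; both $V$ and $JV$ are then Killing because \eqref{tsym} and total skew-symmetry of $T$ force $g(\LC_XW,Y)+g(X,\LC_YW)=-\tfrac12T(X,W,Y)-\tfrac12T(Y,W,X)=0$ for any $\sb$-parallel $W$.

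For the $SU(3)$-preservation, a direct Lie-derivative computation using $\sb V=0$ shows that for any $\sb$-parallel tensor $\eta$, $(\mathbb{L}_V\eta)(X_1,\dots,X_p)=\sum_i\eta(X_1,\dots,T_VX_i,\dots,X_p)$, where $T_V:=T(V,\cdot)$ is the skew endomorphism associated via $g$ to the 2-form $V\lrcorner T$. By Corollary~\ref{corclosT} combined with the identity $V\lrcorner T=d\theta$ above, this 2-form lies in $\Lambda^2_8\cong su(3)$, so $T_V$ is pointwise valued in $su(3)$ and therefore annihilates every $SU(3)$-invariant tensor, yielding $\mathbb{L}_VF=\mathbb{L}_VJ=\mathbb{L}_V\ps=\mathbb{L}_V\sp=0$. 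Finally, Theorem~\ref{closT}(c) together with \eqref{nn1} forces $\lambda,\mu$ to be constant, so $\mathbb{L}_VN=\lambda\mathbb{L}_V\ps+\mu\mathbb{L}_V\sp=0$. I expect the main obstacle to be the interface between the abstract GFS gradient theorem (which merely produces some gradient soliton vector field) and the sharper $\sb V=0$ demanded by b); this hinges decisively on the enhanced ACYT identity $Ric=-\sb\theta$ of Corollary~\ref{corclosT}, which has no Riemannian analogue, together with the crucial observation that $V\lrcorner T$ lies in the $su(3)$-summand $\Lambda^2_8$, since the latter is what collapses all $SU(3)$-Lie derivatives to the trivial action of $su(3)$ on its own invariants.
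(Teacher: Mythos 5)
Your proposal is correct, and its skeleton coincides with the paper's: Proposition~\ref{grsol} plus the Garcia-Fern\'andez--Streets gradient theorem gives a); the equivalence with b) comes from matching $Ric=-\sb\theta$ (Theorem~\ref{closT}/Corollary~\ref{corclosT}) against $Ric=-\sb\sb f$ from \eqref{gein4}, with the converse recovered from the Ricci identity $\sb_i\sb_jf-\sb_j\sb_if=-T_{ijs}df_s$; and $d\theta=V\lrcorner T$, $\sb(JV)=0$ and the Killing property are obtained exactly as in the text. Where you genuinely diverge is the $SU(3)$-preservation. The paper proves $\mathbb{L}_VF=\mathbb{L}_V\ps=\mathbb{L}_V\sp=0$ by hand: it computes $\mathbb{L}_VF$ from the definition and cancels the two torsion terms using the $(1,1)$-type of $d\theta$ (its \eqref{lieF}), then writes $\mathbb{L}_V\ps,\mathbb{L}_V\sp$ as contractions of $d\theta$ with $\ps,\sp$ (its \eqref{lieps}), shows via the identities \eqref{iden} that these $3$-forms are orthogonal to both $\ps$ and $\sp$, and concludes by a type argument that a $(3,0)+(0,3)$ form orthogonal to $\mathrm{span}\{\ps,\sp\}$ vanishes. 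You instead observe once and for all that for a $\sb$-parallel tensor $\eta$ and $\sb V=0$ one has $\mathbb{L}_V\eta=\sb_V\eta+\sum_i\eta(\dots,T_VX_i,\dots)$ with $T_V$ the skew endomorphism of the $2$-form $V\lrcorner T=d\theta$, and that $d\theta\in\Lambda^2_8\cong su(3)$ (Theorem~\ref{closT}/Corollary~\ref{corclosT}) forces this infinitesimal action to annihilate every $SU(3)$-invariant tensor. This is a cleaner and more conceptual argument that subsumes the paper's computations \eqref{lieF}--\eqref{lieps1} in one stroke and generalizes immediately to any $\sb$-parallel $SU(3)$-invariant object; the paper's explicit route has the minor advantage of exhibiting the concrete formulas for $\mathbb{L}_V\ps$, $\mathbb{L}_V\sp$ in terms of $d\theta$. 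Both treatments finish $\mathbb{L}_VN=0$ identically, via $N=\lambda\ps+\mu\sp$ with $\lambda,\mu$ constant.
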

\begin{proof}
The first statement a) follows from Proposition~\ref{grsol} and the general considerations in \cite[Chapter~6]{GFS},  \cite[Lemma~6.3, Corollary~6.10, Corollary~6.11]{GFS}).

To prove b) follows from a) 
observe that the  condition $dT=0$ and \eqref{rics} imply
$$Ric=-\sb\theta, \quad -\d T=-d^{\sb}\theta=-d\theta+\theta\lrcorner T$$ by Theorem~\ref{closT} and \eqref{rics}. The latter  combined with \eqref{gein4} yield \[\sb(\theta-df)=0,\qquad d\theta=V\lrcorner T.\]

For the converse, b)  yields $\sb\theta=\sb df$, which, combined  with Theorem~\ref{closT}  and \eqref{rics}  gives
\[Ric_{ij}=-\sb_i\theta_j=-\sb_i\sb_jf,\quad -\d T_{ij}=Ric_{ij}-Ric_{ji}=df_sT_{sij}\]
since $0=d^2f=\LC_i\LC_jf-\LC_j\LC_if=\sb_i\sb_jf-\sb_j\sb_if+df_sT_{sij}$.
Hence, \eqref{gein4} holds which proves the equivalences between a) and b).

Further,  $\sb JV=(\sb V)J =0$. We also  have $( \mathbb{L}_Vg)_{ij}=\LC_iV_j+\LC_jV_i=\sb_iV_j+\sb_jV_i=0$ since $\sb V=0$. Hence $V$ and $JV$  are Killing vector fields.

We calculate from the definitions of the Lie derivative and the torsion tensor  (see e.g. \cite{KN}) using $\sb V=0$ that
\begin{multline}\label{lieF}
( \mathbb{L}_VF)(X,Y)=VF(X,Y)-F([V,X],Y)-F(X,[V,Y])\\=VF(X,Y)-F(\sb_VX,Y)-F(X,\sb_VY)+T(V,X,e_a)F(e_a,Y)+T(V,Y,e_a)F(X,e_a)\\=(\sb_VF)(X,Y)+d\theta(X,JY)-d\theta(Y,JX)=0,
\end{multline}
where we apply the already proved identity $d\theta=V\lrcorner T$ to get the third equality and use $\sb F=0$ and that $d\theta$ is (1,1) form due to Theorem~\ref{closT} to achieve the last identity.

Using the general identity
\[g( \mathbb{L}_VJ)X,Y)=- (\mathbb{L}_Vg)(JX,Y)- (\mathbb{L}_VF)(X,Y),\]
we conclude $ \mathbb{L}_VJ=0$.

Similarly to the proof of \eqref{lieF} we obtain
\begin{equation}\label{lieps}
\begin{split}
( \mathbb{L}_V\ps)_{ijk}=d\theta_{is}\ps_{sjk}+d\theta_{js}\ps_{ski}+d\theta_{ks}\ps_{sij};\\
( \mathbb{L}_V\sp)_{ijk}=d\theta_{is}\sp_{sjk}+d\theta_{js}\sp_{ski}+d\theta_{ks}\sp_{sij}.
\end{split}
\end{equation}
We calculate from \eqref{lieps} using the identities \eqref{iden} that
\begin{equation}\label{lieps1}
\begin{split}
( \mathbb{L}_V\ps)_{ijk}\ps_{ijk}=3d\theta_{is}\ps_{sjk}\ps_{ijk}=12d\theta_{is}\delta _{si}=0=( \mathbb{L}_V\sp)_{ijk}\sp_{ijk};\\
( \mathbb{L}_V\ps)_{ijk}\sp_{ijk}=3d\theta_{is}\ps_{sjk}\sp_{ijk}=-12d\theta_{is}F_{si}=0=-( \mathbb{L}_V\sp)_{ijk}\ps_{ijk}.
\end{split}
\end{equation}
since $d\theta\in su(3)$ due to Theorem~\ref{closT} which is used to conclude the last equality in \eqref{lieps1}.

We know that $d\theta$ is a  (1,1) two form due to Theorem~\ref{closT}  and $\ps,\sp$ are (3,0)+(0,3) three forms. Apply this to \eqref{lieps} to conclude that the three forms $( \mathbb{L}_V\ps), ( \mathbb{L}_V\sp)$ are  (3,0)+(0,3) three forms and therefore belong to $span\{\ps,\sp\}$ since M is six dimensional. Therefore, \eqref{lieps1} imply  $\mathbb{L}_V\ps=0=\mathbb{L}_V\sp$. Since $d\lambda=d\mu=0$ we get from \eqref{nn1} $\mathbb{L}_VN=0$ which completes the proof  of \eqref{vsu3}.
\end{proof}
In the  case $N=0$ one has general identity $ \mathbb{L}_{JV}J=-J \mathbb{L}_VJ$ and we have
\begin{cor}\label{infc}
Let $(M,g,J,\Psi)$ be a 6-dimensional  compact pluriclosed CYT space.  The next two conditions are equivalent:
\begin{itemize}
\item[a).]  $(M,g,J,\Psi)$   is a steady generalized gradient Ricci soliton; 
\item[b).]  The vector field $V=\theta-df $  is $ \sb-$parallel, $\sb V=0.$
\end{itemize}
The $\sb-$parallel vector field $V$  determines $d\theta$ and  preserves the   $SU(3)$ structure $(g,J,F,\ps,\sp)$, i.e
\begin{equation*}
d\theta=V\lrcorner T, \quad \mathbb{L}_Vg=\mathbb{L}_VF= \mathbb{L}_VJ=\mathbb{L}_V\ps=\mathbb{L}_V\sp=0.
\end{equation*}
The vector field $JV$ is $\sb-$parallel, Killing and holomorphic.
\end{cor}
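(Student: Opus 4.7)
The plan is to deduce Corollary~\ref{infc} as a direct specialization of Theorem~\ref{inf} to the Hermitian case $N=0$, with only the holomorphicity of $JV$ requiring an additional (standard) observation. First I would note that a pluriclosed CYT 6-manifold is exactly a 6-dimensional ACYT manifold with $N=0$ satisfying $dT=0$ (since in the Hermitian case the pluriclosed condition $\partial\bar\partial F=0$ is equivalent to $dT=0$). Thus all hypotheses of Theorem~\ref{inf} are in force, and the equivalence of a) and b) follows verbatim. The formula $d\theta = V\lrcorner T$ and the invariances $\mathbb{L}_V g = \mathbb{L}_V F = \mathbb{L}_V J = \mathbb{L}_V\ps = \mathbb{L}_V\sp = 0$ transfer directly; the condition $\mathbb{L}_V N=0$ from Theorem~\ref{inf} becomes vacuous.

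Next I would treat $JV$. Since $\sb J=0$ and $\sb V=0$, the Leibniz rule gives $\sb(JV)=0$, so $JV$ is $\sb$-parallel. For any $\sb$-parallel vector field $W$, the relation \eqref{tsym} yields $(\mathbb{L}_W g)(X,Y)=(\LC_X W)Y+(\LC_Y W)X=(\sb_X W)Y+(\sb_Y W)X=0$ (the torsion contribution cancels because $T$ is skew), so $JV$ is Killing. This part requires no Hermitian hypothesis.

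The only genuinely new ingredient is the holomorphicity $\mathbb{L}_{JV} J=0$. For this I would invoke the general Hermitian identity (recorded just before the corollary statement in the excerpt)
\[
\mathbb{L}_{JV} J = -J\,\mathbb{L}_V J,
\]
which holds precisely when $N=0$. Combining this with $\mathbb{L}_V J=0$ supplied by Theorem~\ref{inf} gives $\mathbb{L}_{JV} J=0$, i.e.\ $JV$ is holomorphic.

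Since nearly the entire corollary is packaged inside Theorem~\ref{inf}, there is no real obstacle: the only point one has to be careful with is recognizing the pluriclosed hypothesis as the hypothesis $dT=0$ of Theorem~\ref{inf} under $N=0$, and citing the $N=0$ commutation identity $\mathbb{L}_{JV}J = -J\,\mathbb{L}_V J$ to propagate the invariance of $J$ under $V$ to invariance under $JV$.
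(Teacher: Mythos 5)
Your proposal is correct and follows essentially the same route as the paper: the authors likewise obtain the corollary by specializing Theorem~\ref{inf} to the Hermitian case (where pluriclosed means $dT=0$), with $\sb(JV)=0$ and the Killing property already supplied by that theorem's proof, and they derive holomorphicity of $JV$ from the same identity $\mathbb{L}_{JV}J=-J\,\mathbb{L}_VJ$ valid when $N=0$. No gaps.
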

We remark that Corollary~\ref{infc} is known in all dimensions due to \cite[Proposition~2.6]{GFJS}.  Corollary~\ref{infc} on a complex 6-manifold  is a part of the 6-dimensional case of 
\cite[Proposition~2.6]{GFJS} plus additional conclusions that the vector field $V$ is $\sb$-parallel and preserves the $SU(3)$ structure. Basic examples of compact Hermitian non-K\"ahler steady generalized gradient Ricci solitons are compact Lie groups endowed with biinvariant metric and the left-invariant Samelson's complex structure. The Strominger-Bismut connection is the left-invariant flat Cartan connection with harmonic torsion 3-form of constant norm and the function $f$ has to be a constant.


\end{document}